\newcommand{\cal}{\mathcal}
\newtheorem{thm}{Theorem}[section]
\newtheorem{cor}[thm]{Corollary}
\newtheorem{lem}[thm]{Lemma}
\newtheorem{exm}[thm]{Example}
\newtheorem{prop}[thm]{Proposition}
\newtheorem{defn}[thm]{Definition}
\newtheorem{rem}[thm]{Remark}
\def\s{\stackrel}
\def\C{\mathscr{C}}
\def\D{\mathscr{D}}
\def\T{\mathcal{T}}
\def\M{\mathcal{M}}
\def\E{\mathcal{E}}
\def\S{\mathcal{S}}
\def\X{\mathscr{X}}
\def\Y{\mathscr{Y}}
\def\I{\mathcal {I}}
\def\Ker{\mbox{Ker}}
\def\Ext{\mbox{Ext}}
\def\Hom{\mbox{Hom}}
\def\Im{\mbox{Im}}
\def\la{\Lambda}
\def\sttm{\mbox{s}\tau\mbox{-tilt}\la}
\def \text{\mbox}
\def\Mod{\mathsf{Mod}}
\newcommand{\pd}{\mathsf{pd}\hspace{.01in}}
\newcommand{\add}{\mathsf{add}\hspace{.01in}}
\newcommand{\Fac}{\mathsf{Fac}\hspace{.01in}}
\newcommand{\End}{\operatorname{End}\nolimits}
\renewcommand{\Mod}{\mathsf{Mod}\hspace{.01in}}
\renewcommand{\mod}{\mathsf{mod}\hspace{.01in}}
\newcommand{\proj}{\mathsf{proj}\hspace{.01in}}
\newcommand{\co}{\mathsf{Coker}\hspace{.01in}}
\begin{document}

\title{Triangulated categories with cluster tilting subcategories}

\dedicatory{Dedicated to Professor Idun Reiten on the occasion of her 75th birthday}
\author[Yang]{Wuzhong Yang}
\address{
School of Mathematics, Northwest University, 710127, Xi'an, Shaanxi, P. R. China}
\email{yangwz@nwu.edu.cn}

\author[Zhou]{Panyue Zhou$^\ast$}
\address{
College of Mathematics, Hunan Institute of Science and Technology, 414006 Yueyang, Hunan, P. R. China.}
\email{panyuezhou@163.com}
\author[Zhu]{Bin Zhu}
\address{
Department of Mathematical Sciences,
Tsinghua University,
100084, Beijing,
P. R. China
}
\email{bzhu@math.tsinghua.edu.cn}

\begin{abstract}

Let $\C$ be a triangulated category with a cluster tilting subcategory $\T$. We introduce the notion of $\T[1]$-cluster tilting subcategories (also called ghost cluster tilting subcategories) of $\C$, which are a generalization of cluster tilting subcategories. We first develop a basic theory on ghost cluster tilting subcategories. Secondly, we study links between ghost cluster tilting theory and $\tau$-tilting theory: Inspired by the work of Iyama, J{\o}rgensen and Yang \cite{ijy}, we introduce the notion of $\tau$-tilting subcategories and tilting subcategories of $\mod\T$. We show that there exists a bijection between weak $\T[1]$-cluster tilting subcategories of $\C$ and support $\tau$-tilting subcategories of $\mod\T$. Moreover, we figure out the subcategories of $\mod\T$ which correspond to cluster tilting subcategories of $\C$. This generalizes and improves several results by Adachi-Iyama-Reiten \cite{AIR},  Beligiannis \cite{Be2}, and Yang-Zhu \cite{YZ}. Finally, we prove that the definition of ghost cluster tilting objects is equivalent to the definition of relative cluster tilting objects introduced by the first and the third author in \cite{YZ}.

\end{abstract}

\subjclass[2010]{18E30; 16G20; 16G70}

\keywords{Ghost cluster tilting subcategories; Support $\tau$-tilting subcategories; $\tau$-tilting subcategories, Cluster tilting subcategories.}

\thanks{$^\ast$Corresponding author. The first author was supported by Scientific Research Program Funded by Shaanxi Provincial Education Department (Program No. 17JK0794).
The second author is supported by the NSF of China (Grants No. 11671221) and the China Postdoctoral Science Foundation (Grant No. 2016M591155).
The third author is supported by the NSF of China (Grants No. 11671221). }

\maketitle

\tableofcontents

\bigskip

\section{Introduction}

In 2012, Adachi-Iyama-Reiten \cite{AIR} introduced the $\tau$-tilting theory for finite dimensional algebras. As a generalization of classical tilting theory, it completes tilting theory from the viewpoint of mutation. Nowadays the relationships between $\tau$-tilting theory and the various aspects of the representation theory of finite dimensional algebras have been studied. In 2013, Iyama-J{\o}rgensen-Yang \cite{ijy} gave a functor version of $\tau$-tilting theory. They considered modules over a category and showed for a triangulated category $\C$ with a silting subcategory $\S$, there exists a bijection between the set of silting subcategories of $\C$ which are in $\S\ast\S[1]$ and the set of support $\tau$-tilting pairs of mod$\S$.
\medskip

Cluster-tilted algebras, as the endomorphism algebras of cluster tilting objects in cluster categories, were introduced by Buan-Marsh-Reiten \cite{BMR}. The authors showed that the module category of a cluster-tilted algebra is equivalent to the quotient category of cluster category $\C$ by the cluster tilting object $T$, i.e. $\C/[T[1]]$ $\xrightarrow{~\sim~}  \mod\End T$. Actually, the equivalence above still holds for more general cases. It was proved that for a triangulated category $\C$ with a cluster tilting subcategory $\T$, there is an equivalence \cite{kr, KZ, IY}
$$  \C / [ \T[1] ]    \xrightarrow{~\sim~}  \mod\, \T.$$ Then we have a functor from $\C$ to $\mod\, \T$, which is denoted by $\mathbb{H}$.
Under this functor, a series of papers investigate the relationships between objects in $\C$ and modules in $\mod\, \T$ (see for example \cite{Sm, FL, hj, Be2}). Especially Adachi-Iyama-Reiten \cite{AIR} established a bijection between cluster tilting objects in a 2-Calabi-Yau triangulated category and support $\tau$-tilting modules over a cluster-tilted algebra (see also \cite{CZZ, YZZ, YZ} for various versions of this bijection). It is natural to ask which class of subcategories of $\C$ correspond bijectively
to support $\tau$-tilting subcategories of $\mod\, \T$ for higher Calabi-Yau triangulated categories or arbitrary triangulated categories.
\medskip

Motivated by this question and the bijection given by Iyama-J{\o}rgensen-Yang \cite{ijy}, we introduce the notion of $\T[1]$-cluster tilting subcategories (also called ghost cluster tilting subcategories) of $\C$, which are a generalization of cluster tilting subcategories. The first part of our work is to develop a basic theory of ghost cluster tilting subcategories of $\C$. Some intrinsic properties and results on ghost cluster tilting subcategories will be presented. Inspired also by Adachi-Iyama-Reiten \cite{AIR} and Iyama-J{\o}rgensen-Yang \cite{ijy}, we introduce the notion of $\tau$-tilting subcategories and tilting subcategories of $\mod\T$. The second part of our paper is to give some close relationships between certain ghost cluster tilting subcategories of $\C$ and some important subcategories of $\mod\T$. Here is the main result in the second part.
\medskip

\begin{thm}(see Theorem \ref{a7} and Theorem \ref{a9} for more details)
Let $\C$ be a triangulated category with a cluster-tilting subcategory $\T$.
The functor $\mathbb{H}\colon \C\to \Mod\, \T$ induces a bijection $$\Phi\colon \X\longmapsto \big(\mathbb{H}(\X),\T\cap\X[-1]\big)$$
from the first of the following sets to the second:
\begin{itemize}
\item[\emph{(I)}] $\T[1]$-rigid subcategories of $\C$.

\item[\emph{(II)}] $\tau$-rigid pairs of $\mod\, \T$.
\end{itemize}
It restricts to a bijection from the first to the second of the following sets.
\begin{itemize}
\item[\emph{(I)}] Weak $\T[1]$-cluster tilting subcategories of $\C$.

\item[\emph{(II)}] Support $\tau$-tilting subcategories of $\mod\, \T$.
\end{itemize}
\end{thm}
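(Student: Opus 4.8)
The plan is to establish the bijection $\Phi$ in two stages, exactly matching the two-tier structure of the statement: first at the level of rigid objects, then upgrading to the maximal (cluster-tilting resp. support $\tau$-tilting) objects.

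First I would recall the standard dictionary provided by the equivalence $\C/[\T[1]]\xrightarrow{\sim}\mod\,\T$ and the functor $\mathbb H$. The key homological input is the formula translating $\T[1]$-rigidity of $\X$ into a vanishing condition for $\mathbb H(\X)$: for $X,Y\in\C$ one has a natural description of $\Hom_{\C}(X,Y[1])$ in terms of $\Ext^1_{\mod\T}(\mathbb H X,\mathbb H Y)$ together with a "ghost" correction term coming from morphisms factoring through $\T[1]$, and the condition that $\X$ have no nonzero morphisms to $\X[1]$ that are ghosts (i.e. the $\T[1]$-rigidity built into the definition of ghost cluster tilting) is precisely what makes $\mathbb H(\X)$ a $\tau$-rigid subcategory of $\mod\T$ after removing the part of $\T$ supported in $\X[-1]$. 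This is where the pair $\big(\mathbb H(\X),\,\T\cap\X[-1]\big)$ naturally appears: $\T\cap\X[-1]$ records exactly the projectives of $\mod\T$ that are killed, so that the pair satisfies the support condition $\Hom(\T\cap\X[-1],\mathbb H\X)=0$. I would verify $\Phi$ is well defined on (I)$\to$(II) by this computation, then construct the inverse: given a $\tau$-rigid pair $(\M,\E)$ of $\mod\T$, lift $\M$ along $\mathbb H$ to objects of $\C$ (using that $\mathbb H$ is full and dense modulo $\T[1]$) and adjoin the preimages in $\T$ of $\E[1]$; checking that this is inverse to $\Phi$ is a diagram chase using the defining triangles.

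Second, I would show $\Phi$ restricts correctly to the maximal objects. The point is that both maximality conditions can be characterized by the same "Bongartz-type" completion / functorial-finiteness property transported through $\mathbb H$: a $\T[1]$-rigid $\X$ is weak $\T[1]$-cluster tilting iff it is maximal $\T[1]$-rigid, and on the module side support $\tau$-tilting is maximal $\tau$-rigid (this equivalence for $\tau$-rigid pairs is the content of the Iyama–Jørgensen–Yang framework, which the paper has set up). Since $\Phi$ is an inclusion-preserving bijection on rigid objects — again immediate from the fact that $\mathbb H$ and the pair construction are both monotone — it automatically carries maximal objects to maximal objects in both directions, giving the restricted bijection. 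Here one uses the earlier structural results of the paper (the cited Theorems \ref{a7}, \ref{a9} and the basic theory of ghost cluster tilting subcategories) to know that weak $\T[1]$-cluster tilting subcategories are contravariantly finite and coincide with the maximal $\T[1]$-rigid ones.

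The main obstacle I expect is the homological bookkeeping in the first stage: controlling the ghost morphisms. One must pin down precisely how $\Hom_\C(X,Y[1])$ decomposes relative to $\T$, show that the $\T[1]$-rigidity hypothesis kills exactly the right summand, and — crucially — verify that the auxiliary data $\T\cap\X[-1]$ behaves compatibly (i.e. that $\mathbb H$ of a $\T[1]$-rigid subcategory together with this explicit projective part really does satisfy the full definition of a $\tau$-rigid \emph{pair}, including the support/orthogonality condition). Getting this correspondence to be an honest bijection, rather than merely a well-defined map each way, requires care that the two constructions are mutually inverse on the nose, which is the technical heart of the argument; the upgrade to the maximal objects is then formal.
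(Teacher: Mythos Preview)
Your first stage is roughly on track, though note that $\tau$-rigidity in the Iyama--J{\o}rgensen--Yang sense is defined via Property~(S) on a class of projective presentations, not via $\Ext^1$-vanishing; the paper's Lemma~\ref{a6} makes this translation explicit by showing that applying $\mathbb H$ to the defining triangles $T_1\to T_0\to X\to T_1[1]$ produces projective presentations, and that $\T[1]$-rigidity of $\X$ is equivalent to Property~(S) for this class. Your $\Ext^1$ heuristic is not the right language here and would only coincide with Property~(S) when the relevant objects have projective dimension at most one, which is not assumed.

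The real gap is in your second stage. Two of your structural claims are false in this setting:
\begin{itemize}
\item ``Weak $\T[1]$-cluster tilting $=$ maximal $\T[1]$-rigid'' is not correct. The paper proves (Theorem~3.10) that weak $\T[1]$-cluster tilting is equivalent to maximal $\T[1]$-rigid \emph{together with} the extra condition $\T\subseteq\X[-1]\ast\X$; this condition is not automatic for arbitrary maximal $\T[1]$-rigid subcategories (the paper only derives it under a functorial-finiteness hypothesis, Corollary~\ref{cor:subset}).
\item ``Weak $\T[1]$-cluster tilting subcategories are contravariantly finite'' is explicitly false: Example~3.9 in the paper exhibits a weak ghost cluster tilting subcategory (in a cluster category of type $\mathbb A_\infty$) that is not contravariantly finite.
\end{itemize}
Because of this, the ``formal'' maximality argument you propose does not go through: $\Phi$ being order-preserving on rigid objects does not by itself tell you that the images of weak $\T[1]$-cluster tilting subcategories are exactly the support $\tau$-tilting pairs, nor conversely. (It is also not established in the categorical IJY framework that support $\tau$-tilting coincides with maximal $\tau$-rigid without finiteness assumptions; the Bongartz completion underlying that fact for finite-dimensional algebras is not available here.)

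The paper's actual proof of Theorem~\ref{a9} is not formal at all: for well-definedness it uses the condition $\T\subseteq\X[-1]\ast\X$ directly to produce, for each $T\in\T$, a triangle $T\to X_1\to X_2\to T[1]$ with $X_1,X_2\in\X$, and shows $\mathbb H$ sends this to the required left $\mathbb H(\X)$-approximation. For surjectivity, given a support $\tau$-tilting pair $(\M,\E)$, it does \emph{not} merely lift and invoke maximality; it enlarges the lift $\X$ by adjoining the cones $Y_T$ of the lifted approximations $T\to X_3$, and then verifies by explicit triangle manipulations (including an octahedral/homotopy-cartesian argument) that $\widetilde\X=\X\vee\add\{Y_T\}$ satisfies the defining equality $\widetilde\X=\{M\mid [\T[1]](M,\widetilde\X[1])=0=[\T[1]](\widetilde\X,M[1])\}$. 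This hands-on verification is the substance of the argument, and your proposal skips it.
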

Consequently, we also figure out the subcategories of $\mod\, \T$ which correspond to cluster tilting subcategories of $\C$. This generalizes and improves several results in the literature.
This also answers the question above.
\medskip

When ghost cluster tilting subcategories have additive generators, we call these additive generators in $\C$ ghost cluster tilting objects. Comparing with the definition of relative cluster tilting objects in \cite{YZ}, the last part of this paper aims to show that ghost cluster tilting objects are exactly relative cluster tilting objects introduced in \cite{YZ} provided that $\C$ has a Serre functor.
\medskip

The paper is organized as follows. In Section 2, we recall some elementary definitions and facts
that will be used frequently, including cluster tilting subcategories and support $\tau$-tilting subcategories. In Section
3, we will develop a basic theory of ghost cluster tilting subcategories of $\C$. In Section 4, we explore the connections between ghost cluster tilting theory and $\tau$-tilting theory. In the last section, we show that the definition of ghost cluster tilting objects is equivalent to relative cluster tilting objects in \cite{YZ} provided that the ambient category $\C$ has a Serre functor.
\medskip

We conclude this section with some conventions.
\medskip

 Throughout this article, $k$ is an algebraically closed field. All modules we consider in this paper are left modules. Let $\C$ be an additive category. When we say that $\D$ is a subcategory of $\C$, we always assume that $\D$ is a full subcategory which is closed under isomorphisms, direct sums and direct summands. We denote by $[\D]$ the ideal of $\C$ consisting of morphisms which factor through objects in $\D$.
 Thus we get a new category $\C/[\D]$ whose objects are objects of $\C$ and whose morphisms are elements of $\C(X, Y)/[\D](X, Y)$ for $X, Y\in\C/[\D]$. For any object $M$, we denote by $\add M$ the full subcategory of $\C$ consisting of direct summands of direct sum of finitely many copies of $M$ and simply denote $\C/[\add M]$ by $\C/[M]$. Let $\X$ and $\Y$ be subcategories of $\C$. We denote by $\X \vee \Y$ the smallest subcategory of $\C$ containing $\X$ and $\Y$.  For two morphisms $f:M\rightarrow N$ and $g:N\rightarrow L$, the composition of $f$ and $g$ is denoted by $gf:M\rightarrow L$.
\medskip

Let $X$ be an object in $\C$. A morphism $f:D_0\rightarrow X$ is called a \textit{right $\D$-approximation} of $X$ if $D_0\in \D$ and Hom$_{\C}(-, f)|_{\D}$ is surjective. If any object in $\C$ has a right $\D$-approximation, we call $\D$ \textit{contravariantly finite} in $\C$. Dually, a \textit{left $\D$-approximation} and a \textit{covariantly finite subcategory} are defined. We say that $\D$ is \textit{functorially finite} if it is both covariantly finite and contravariantly finite.  For more details, we refer to \cite{ar}.
\medskip

For any triangulated category $\C$, we assume that it is $k$-linear, Hom-finite, and satisfies the Krull-Remak-Schmidt property \cite{Ha}. In $\C$, we denote the shift functor by $[1]$ and for objects $X$ and $Y$, define
$\text{Ext}_{\C}^i(X, Y)= \mbox{Hom}_{\C}(X, Y[i]).$ For two subcategories $\X, \Y$ of $\C$, we denote by Ext$^1(\X,\Y)=0$ when Ext$^1(X,Y)=0$ for $X\in \X, Y\in \Y$. For an object $X$, $|X|$ denotes the number of
non-isomorphic indecomposable direct summands of $X$.

\section{Background and preliminary results}\label{sect:0}
In this section, we give some background material and recall some results that will be used in this paper.

\subsection{Cluster tilting subcategories and relative cluster tilting objects}
Let $\C$ be a triangulated category. An important class of subcategories of $\C$ are the cluster tilting subcategories, which have many nice properties. We recall the definition of cluster tilting subcategories from \cite{bmrrt,kr,KZ,IY}.

\begin{defn}
\begin{enumerate}
\item[\emph{(1)}] A subcategory $\T$ of $\C$ is called {\rm rigid} if ${\rm Ext}_{\C}^1(\T, \T)=0$.
\item[\emph{(2)}] A subcategory $\T$ of $\C$ is called {\rm maximal rigid} if it is rigid and maximal with respect to the property: $\T=\{M\in\C\ |\ {\rm Ext}_{\C}^1(\T\vee \add M, \T\vee \add M)=0\}$.
\item[\emph{(3)}] A functorially finite subcategory $\T$ of $\C$ is called {\rm cluster tilting} if
$$\T=\{M\in\C\ |\ {\rm Ext}_{\C}^1(\T, M)=0\}=\{M\in\C\ |\ {\rm Ext}_{\C}^1(M, \T)=0\}.$$
\item[\emph{(4)}] An object $T$ in $\C$ is called {\rm cluster tilting} if $\add T$ is a cluster tilting subcategory of $\C$.
\end{enumerate}
\end{defn}
\begin{rem}\label{rem:ctsubcat}
In fact, Koenig and Zhu \emph{\cite{KZ}} indicate that a subcategory $\T$ of $\C$ is cluster tilting if and only if it is contravariantly finite in $\C$ and\ \  $\T=\{M\in\C\ |\ {\rm Ext}_{\C}^1(\T, M)=0\}$.
\end{rem}
For two subcategories $\X$ and $\Y$ of $\C$, we denote by $\X\ast \Y$ the collection of objects in $\C$ consisting of all such $M\in \C$ with triangles $$X\longrightarrow M \longrightarrow Y \longrightarrow X_0[1],$$
where $X\in \X$ and $Y\in \Y$.
\medskip

 Recall from \cite{BK} that $\C$ has a Serre functor $\mathbb{S}$ provided $\mathbb{S}:\C\rightarrow \C$ is an equivalence and there exists a functorial isomorphism
$${\rm Hom}_{\C}(A, B)\simeq D{\rm Hom}_{\C}(B, \mathbb{S}A)$$
for any $A, B\in \C$, where $D$ is the duality over $k$. Thus $\C$ has the  Auslander-Reiten translation
$\tau_{\C}\simeq\mathbb{S}[-1]$ \cite{RVdB}. Define an equivalence $F=\tau_{\C}^{-1}\circ [1]$. We call an object $M$ in $\C$ \emph{$F$-stable} if $F(M)\simeq M$ and a subcategory $\M$ of $\C$ \emph{$F$-stable} if $F(\M)=\M$. We say that $\C$ is \emph{$2$-Calabi-Yau}
if $\mathbb{S}\simeq [2]$. For a $2$-Calabi-Yau category $\C$, $F=id_{\C}$.
\medskip

We have the following result \cite{kr, KZ, IY}, which will be used frequently in this paper.
\begin{prop}\label{FT}
Let $\T$ be a cluster-tilting subcategory of $\C$. Then
\begin{enumerate}
\item[\emph{(a)}] $\C=\T\ast \T[1]$.
\item[\emph{(b)}] $F\T=\T$ if $\C$ has a Serre functor.
\end{enumerate}
\end{prop}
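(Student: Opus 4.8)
The plan is to derive both statements directly from the defining characterization of a cluster-tilting subcategory, using the triangulated structure and (for (b)) the Serre duality/Auslander--Reiten formula. For part (a), I would first show $\T\ast\T[1]$ is closed under the relevant operations so that it suffices to check membership object by object: take an arbitrary $X\in\C$ and, using that $\T$ is contravariantly finite, choose a right $\T$-approximation $f\colon T_0\to X$. Completing $f$ to a triangle $T_1\to T_0\xrightarrow{f} X\to T_1[1]$, the key point is that $T_1\in\T$. To see this, apply $\Hom_\C(\T,-)$ to the triangle: the connecting map $\Hom_\C(\T,T_0)\to\Hom_\C(\T,X)$ is surjective precisely because $f$ is a right $\T$-approximation, which forces $\Hom_\C(\T,T_1[1])=\Ext^1_\C(\T,T_1)$ to inject into $\Ext^1_\C(\T,T_0)=0$; hence $\Ext^1_\C(\T,T_1)=0$, and by the cluster-tilting property $T_1\in\T$. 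Rotating the triangle to $X\to T_1[1]\to T_0[1]\to X[1]$ exhibits $X\in\T[1]\ast\T[1]$ — not quite what we want, so instead I rotate the original triangle the other way to $T_1\to T_0\to X\to T_1[1]$ and read it as the defining triangle showing $X\in\T\ast\T[1]$ directly, with $T_1\in\T$ and $T_0\in\T$ (note $T_0\in\T\subseteq\T$, and the ``$Y$'' term is $T_1[1]\in\T[1]$ after rotation). The reverse inclusion $\T\ast\T[1]\subseteq\C$ is trivial, giving $\C=\T\ast\T[1]$.

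For part (b), assume $\C$ has a Serre functor $\mathbb{S}$, so $\tau_\C\simeq\mathbb{S}[-1]$ and $F=\tau_\C^{-1}\circ[1]$. I would show $F\T=\T$ by showing $F\T$ again satisfies the cluster-tilting characterization and using maximality, or more cleanly by a direct Ext-computation: for $M\in\T$ and any $T\in\T$,
\[
\Ext^1_\C(T, FM)=\Hom_\C(T,(FM)[1])=\Hom_\C(T,\tau_\C^{-1}M[2]).
\]
Using the Auslander--Reiten formula $\Hom_\C(T,\tau_\C^{-1}M[2])\simeq D\Hom_\C(M[2],\mathbb{S}\tau_\C^{-1}M)\simeq D\Hom_\C(M[2],M[1])\simeq D\Ext^{-1}_\C(M,M)$, and here the cleaner route is $\Hom_\C(T,\tau_\C^{-1}M[2])\simeq D\Hom_\C(\tau_\C^{-1}M[1],T)\simeq D\Ext^1_\C(\tau_\C^{-1}M,T)$ — wait, I should instead use $\Hom_\C(A,\tau_\C B)\simeq D\Ext^1_\C(B,A)$ in the form $\Ext^1_\C(T,FM)=\Hom_\C(T,\tau_\C^{-1}M[2])\simeq D\Ext^1_\C(\tau_\C^{-1}M[1], T)\simeq D\Ext^1_\C(M[1],T[?])$; the bookkeeping of shifts is exactly the routine part I would not grind through here. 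The upshot I am aiming for is that $\Ext^1_\C(\T, F\T)$ is dual to $\Ext^1_\C(\T,\T)=0$, hence $F\T\subseteq\{M\mid\Ext^1_\C(\T,M)=0\}=\T$; applying the same argument to $F^{-1}$ gives $F^{-1}\T\subseteq\T$, i.e. $\T\subseteq F\T$, so $F\T=\T$. One must also check $F\T$ is functorially finite, but this is immediate since $F$ is an equivalence.

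The main obstacle I anticipate is purely bookkeeping: getting the shifts and the direction of the Serre-duality isomorphism exactly right in part (b), so that the vanishing $\Ext^1_\C(\T,\T)=0$ really does control $\Ext^1_\C(\T,F\T)$ (rather than some $\Ext^i$ with $i\neq 1$). The conceptually delicate point, which I would state carefully, is that one genuinely needs the \emph{full} Serre functor (equivalently the AR-translation on all of $\C$), not merely a relative version, so that $F$ is defined as an autoequivalence of $\C$; once $F$ is an autoequivalence, invariance of the cluster-tilting subcategory under it is forced by the symmetry (left/right) in its definition, and part (a) is the only input needed beyond formal nonsense. I would also remark that in the $2$-Calabi-Yau case $F=\id_\C$, so (b) is vacuous there, consistent with the known picture.
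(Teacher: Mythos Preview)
The paper does not supply its own proof of this proposition; it simply records it as a known result with references to \cite{kr,KZ,IY}. Your argument for (a) is exactly the standard one found in those references and is correct once you stop second-guessing the rotations: the triangle $T_1\to T_0\xrightarrow{f} X\to T_1[1]$ with $T_0,T_1\in\T$, rotated once to $T_0\to X\to T_1[1]\to T_0[1]$, already exhibits $X\in\T\ast\T[1]$.

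For (b) your overall strategy is right, but the ``bookkeeping'' you wave away is not merely routine --- as written it actually fails. The group $\Ext^1_\C(\T,F\T)$ does \emph{not} dualize to $\Ext^1_\C(\T,\T)$: if you trace Serre duality carefully you land on expressions involving $\mathbb{S}^2$ (equivalently $\tau_\C^2$), which rigidity of $\T$ alone does not control. (Your first displayed duality step even drops $T$ from the formula.) The clean fix is to swap the two arguments and exploit the identity $\mathbb{S}\circ F=[2]$: for $T,T'\in\T$,
\[
\Ext^1_\C(FT',T)=\Hom_\C(FT',T[1])\simeq D\Hom_\C(T[1],\mathbb{S}FT')=D\Hom_\C(T[1],T'[2])=D\Ext^1_\C(T,T')=0,
\]
so $F\T\subseteq\{M\mid\Ext^1_\C(M,\T)=0\}=\T$. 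Dually, using $\mathbb{S}\circ F^{-1}[1]=[-1]$ one gets $\Ext^1_\C(T,F^{-1}T')\simeq D\Ext^1_\C(T',T)=0$, whence $F^{-1}\T\subseteq\T$ and thus $\T\subseteq F\T$. Your instinct that the two halves of the cluster-tilting characterization are what make this work was correct; you just have to pair each half with the correct direction of $F$.
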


If there is a cluster tilting object $T$ in $\C$, then any cluster tilting subcategory $\T'$ is of the form $\add\, T'$ for some cluster tilting object $T'$, and the numbers of the non-isomorphic indecomposable direct summands of $T,$ and $T'$ are the same \cite{YZZ}. We denote this number by $r(\C)$, which is called the cluster rank of the triangulated category $\C$. 
When $\C$ is 2-Calabi-Yau, we can define the mutation of cluster tilting objects. In order to generalise it in a more general triangulated category, Yang and Zhu \cite{YZ} introduced the notion of relative cluster-tilting objects
 in triangulated categories.
\begin{defn}
Let $\C$ be a triangulated category with a Serre functor and with cluster tilting objects.
\begin{itemize}
\item An object $X$ in $\C$ is called {\rm ghost rigid} if there exists a cluster tilting object $T$ such that $[T[1]](X, X[1])=0$. In this case, $X$ is also called $T[1]$-{\rm rigid}.    
\item An object $X$ in $\C$ is called {\rm relative cluster tilting} if there exists a cluster tilting object $T$ such that $X$ is $T[1]$-rigid and $|X|=r(\C)$. In this case, $X$ is also called $T[1]$-{\rm cluster tilting}.
\end{itemize}
\end{defn}
Throughout this paper, we denote by $T[1]\mbox{-}{\rm rigid}\,\C$ (respectively, $T[1]\mbox{-}{\rm cluster\ tilt}\,\C$) the set of isomorphism classes of basic $T[1]$-rigid (respectively, basic $T[1]$-cluster tilting) objects in $\C$.

\subsection{Support $\tau$-tilting modules and support $\tau$-tilting subcategories}
Let $\la$ be a finite dimensional $k$-algebra and $\tau$ the Auslander-Reiten translation. We denote by $\proj\la$ the subcategory of $\mod\la$ consisting of projective $\la$-modules. Support $\tau$-tilting modules were introduced by Adachi, Iyama and Reiten \cite{AIR}, which can be regarded as a generalization of tilting modules.
\begin{defn}Let $(X, P)$ be a pair with $X\in \mod\la$ and $P\in \proj\la$.
\begin{enumerate}
\item[\emph{1.}] $X$ is called $\tau${\rm -rigid} if {\rm Hom}$_{\la}(X,\tau X)=0$.
\item[\emph{2.}] $X$ is called $\tau${\rm -tilting} if $X$ is $\tau$-rigid and $|X|=|\la|$.
 \item[\emph{3.}] $(X, P)$ is called a $\tau${\rm -rigid pair} if $X$ is $\tau$-rigid and {\rm Hom}$_{\la}(P, X)=0$. 
\item[\emph{4.}] $(X, P)$ is said to be a {\rm support $\tau$-tilting pair} if it is a $\tau$-rigid pair and $|X|+|P|=|\la|$. In this case, $X$ is called a {\rm support $\tau$-tilting module}.
\end{enumerate}
\end{defn}
Throughout this paper, we denote by $\tau$-rigid$\la$ the set of isomorphism classes of basic $\tau$-rigid pairs of $\la$, and by $\sttm$ the set of isomorphism classes of basic support $\tau$-tilting $\la$-modules.
\medskip

The following proposition gives a criterion for a $\tau$-rigid $\la$-module to be a support $\tau$-tilting $\la$-module.
\begin{prop}
  \label{y4}{\emph{\cite[Proposition 2.14]{ja}}}
Let $\la$ be a finite dimensional algebra and $M$ a $\tau$-rigid $\la$-module.
  Then $M$ is a support $\tau$-tilting $\la$-module if and only if
  there exists an exact sequence
  $$
    \la \xrightarrow{f} M' \xrightarrow{g} M''\to 0,
$$
  with $M',M''\in \add M$ and $f$ a left $(\add M)$-approximation
  of $\la$.
\end{prop}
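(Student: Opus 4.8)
The plan is to recast the statement as an assertion about the torsion class generated by $M$. Since $\la$ is finite dimensional, $\add M$ is functorially finite in $\mod\la$, so $\la$ has a minimal left $(\add M)$-approximation $f_0\colon\la\to M_0$; put $M_1:=\co f_0$, giving an exact sequence $\la\xrightarrow{f_0}M_0\to M_1\to 0$. If $\la\xrightarrow{f}M'\xrightarrow{g}M''\to 0$ is any sequence as in the statement, left minimality of $f_0$ forces $M'\cong M_0\oplus M_0'$ with $M_0'\in\add M$ and $M''\cong M_1\oplus M_0'$; hence a sequence as in the statement exists if and only if $M_1\in\add M$, and it suffices to prove that $M$ is support $\tau$-tilting if and only if $M_1\in\add M$. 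I will use two standard facts about a $\tau$-rigid module, both going back to \cite{AIR}: first, $\Fac M$ is a functorially finite torsion class of $\mod\la$ with $\Ext_\la^1(M,\Fac M)=0$ (the vanishing is immediate from the Auslander--Reiten formula, since every $X\in\Fac M$ is a quotient of a direct sum of copies of $M$ and $\Hom_\la(M,\tau M)=0$); second, $M$ is support $\tau$-tilting precisely when $\add M$ equals the subcategory $\add P(\Fac M)$ of Ext-projective objects of $\Fac M$, where $P(\Fac M)$ is the basic Ext-projective generator of this torsion class.

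The implication ``$\Rightarrow$'' will follow once I check that $M_1$ is always Ext-projective in $\Fac M$: it lies in $\Fac M$ as a quotient of $M_0\in\add M$, and for $X\in\Fac M$, applying $\Hom_\la(-,X)$ to $0\to\Im f_0\to M_0\to M_1\to 0$ and using $\Ext_\la^1(M_0,X)=0$ reduces the claim $\Ext_\la^1(M_1,X)=0$ to the surjectivity of $\Hom_\la(M_0,X)\to\Hom_\la(\Im f_0,X)$. That surjectivity I would get by choosing an epimorphism $M^{\oplus k}\twoheadrightarrow X$: projectivity of $\la$ makes $\Hom_\la(\la,M^{\oplus k})\to\Hom_\la(\la,X)$ onto, the approximation property of $f_0$ makes $\Hom_\la(M_0,M^{\oplus k})\to\Hom_\la(\la,M^{\oplus k})$ onto, and composing these while using the epimorphism $\la\twoheadrightarrow\Im f_0$ gives what is needed. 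Hence $M_1\in\add P(\Fac M)$ for every $\tau$-rigid $M$; if in addition $M$ is support $\tau$-tilting, then $\add P(\Fac M)=\add M$, so $M_1\in\add M$.

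For ``$\Leftarrow$'', assume $M_1\in\add M$; we must upgrade the inclusion $\add M\subseteq\add P(\Fac M)$ (valid since $M$ is Ext-projective in $\Fac M$) to an equality. My preferred route is to reduce to the $\tau$-tilting case: take a maximal idempotent $e$ with $eM=0$, so $M$ is a module over $\ole\la:=\la/\la e\la$, with $|\ole\la|=|\la|-|e|$ and $M$ still $\tau$-rigid over $\ole\la$ (here the maximality of $e$ is used); since every $\la$-map from $\la$ into an $\ole\la$-module factors through $\la\twoheadrightarrow\ole\la$, the sequence $\la\xrightarrow{f_0}M_0\to M_1\to 0$ descends to an exact sequence $\ole\la\to M_0\to M_1\to 0$ over $\ole\la$ with $M_0,M_1\in\add M$ and first map a left $(\add M)$-approximation, so by the $\tau$-tilting case of the criterion (a theorem of \cite{AIR}) $M$ is a $\tau$-tilting $\ole\la$-module; then $(M,\la e)$ is a support $\tau$-tilting pair of $\la$ since $|M|+|e|=|\ole\la|+|e|=|\la|$. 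An alternative argument inside $\mod\la$: one checks that $f_0$ is actually a left $(\add P(\Fac M))$-approximation of $\la$ (an indecomposable summand $Q$ of $P(\Fac M)$ lies in $\Fac M=\Fac P(\Fac M)$, hence is a quotient of some $M^{\oplus j}$, and a map $\la\to Q$ lifts through this epimorphism by projectivity of $\la$, so factors through $f_0$), with cokernel $M_1\in\add M$, and then one invokes the \cite{AIR} bijection between functorially finite torsion classes and support $\tau$-tilting modules to identify $M$ with the module corresponding to $\Fac M$.

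I expect the genuine difficulty to be concentrated in ``$\Leftarrow$'': ruling out indecomposable Ext-projectives of $\Fac M$ outside $\add M$. From the facts that $M$ is Ext-projective in $\Fac M$, generates $\Fac M$, and that the cokernel of a left $(\add M)$-approximation of $\la$ returns to $\add M$, one must still control the torsion-free part of a presentation $0\to L\to M^{\oplus k}\to Q\to 0$ of a hypothetical extra Ext-projective $Q$; this is precisely where either the $g$-vector and rank bookkeeping of \cite{AIR} or the full strength of the torsion-class bijection is unavoidable, and it is also where the descent of $\tau$-rigidity along $\la\twoheadrightarrow\ole\la$ must be justified with care. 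The remaining ingredients---functorial finiteness of $\add M$ and of $\Fac M$, the Auslander--Reiten formula computation, and the compatibility of the approximation sequence with the projection to $\ole\la$---are routine.
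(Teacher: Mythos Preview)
The paper does not prove this proposition: it is quoted from Jasso \cite[Proposition~2.14]{ja} and invoked only as a black box in Step~3 of the proof of Lemma~\ref{h1}. There is therefore no argument in the paper to compare your proposal against.

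On the substance of your sketch: the reduction to a minimal approximation and the ``$\Rightarrow$'' direction are correct and standard. In the ``$\Leftarrow$'' direction, your first route appeals to ``the $\tau$-tilting case of the criterion (a theorem of \cite{AIR})'' over $\ole\la=\la/\la e\la$, but \cite{AIR} does not state the criterion in this form; as written you are invoking the very statement you are proving, specialised to the sincere case over $\ole\la$, so this route is circular unless you supply an independent argument there. Your second route has the right shape, but its last sentence glosses over the only nontrivial point: from the fact that $f_0$ is also a left $\add P(\Fac M)$-approximation with $M_0,M_1\in\add M$ you must still deduce $\add P(\Fac M)\subseteq\add M$, and this is precisely where the Bongartz completion together with a counting argument of the type $|M|\le|P(\Fac M)|\le|M|$ (equivalently, the $g$-vector bookkeeping of \cite{AIR}) is needed. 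You acknowledge this yourself in your closing paragraph, so you have correctly located where the real work lies; the sketch simply does not yet carry it out.
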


In 2013, Iyama, J{\o}rgensen and Yang \cite[Definition 1.3]{ijy} extended the notion of support $\tau$-tilting modules for finite dimensional algebras over fields to that for essentially small additive categories. Let $\T$ be an additive category.
We write $\Mod\, \T$ for the abelian category of contravariant additive functors from $\T$ to the category of abelian groups and
$\mod\, \T$ for the full subcategory of finitely presented functors, see \cite{au1}.
\medskip


\begin{defn}{\emph{\cite[Definition 1.3]{ijy}}}\label{a1}
Let $\T$ be an essentially small additive category.
\begin{itemize}
\item[\emph{(i)}] Let $\M$ be a subcategory of $\mod\,\T$.  A class $\{\,
  P_1 \stackrel{ \pi^M }{ \rightarrow } P_0 \rightarrow M \rightarrow
  0 \,\mid\, M \in \M \,\}$ of projective presentations in $\mod\,
  \T$ is said to have \emph{Property (S)} if
\[
  \emph{\Hom}_{ \mod\,\T }( \pi^M , M' )
  : \emph{\Hom}_{ \mod\,\T }( P_0 , M' )
    \rightarrow \emph{\Hom}_{ \mod\, \T }( P_1 , M' )
\]
is surjective for any $M , M' \in \M$.

\item[\emph{(ii)}] A subcategory $\M$ of $\mod\, \T$ is said to be
  \emph{$\tau$-rigid} if there is a class of projective presentations
  $\{P_1 \rightarrow P_0 \rightarrow M \rightarrow 0\mid M\in\M\}$
  which has Property (S).

\item[\emph{(iii)}] A \emph{$\tau$-rigid pair} of $\mod\, \T$ is a pair $(
  \M , \E )$, where $\M$ is a $\tau$-rigid subcategory of $\mod\,
  \T$ and $\E \subseteq \T$ is a subcategory with $\M( \E )=0$,
  that is, $M( E )=0$ for each $M \in \M$ and $E \in \E$.

\item[\emph{(iv)}] A $\tau$-rigid pair $( \M , \E )$ is \emph{support
  $\tau$-tilting} if $\E = \emph{Ker}\,( \M )$ and for each $T\in \T$
  there exists an exact sequence $\T( - , T )
  \stackrel{f}{\rightarrow} M^0 \rightarrow M^1 \rightarrow 0$ with
  $M^0, M^1 \in \M$ such that $f$ is a left $\M$-approximation. In this case, $\M$ is called a \emph{support
  $\tau$-tilting subcategory} of $\mod\T$.
\end{itemize}
\end{defn}

\subsection{From triangulated categories to abelian categories}
In this subsection, we assume that $\cal T$ is a cluster tilting subcategory of a triangulated category $\C$. A $\T$-module is a contravariant $k$-linear functor $F:\T\rightarrow \Mod k$. Then $\T$-modules form an abelian category $\Mod\T$. We denote by $\mod\T$ the subcategory of $\Mod\T$ consisting of finitely presented $\T$-modules. It is easy to know that $\mod\T$ is an abelian category. Moreover the restricted Yoneda functor
$$\mathbb{H}\colon\C \rightarrow \Mod\, \T, \;\;
  M \mapsto \Hom_{\C}( -,M )\mid_{\T}$$
  is homological and induces an equivalence
  $$
  \T
    \xrightarrow{~\sim~}  \proj(\mod\, \T).
$$

The following results are crucial in this paper.

\begin{thm}\label{a5}
\begin{itemize}
\item[\emph{(i)}] $\mathbb{H}(\C)$ is a subcategory of $\mod\T$.

\item[\emph{(ii)}] \emph{\cite{au1}} For
    $N \in \Mod\, \T$ and $T \in \T$, there exists a
    natural isomorphism
\[
 \emph{\Hom}_{ \Mod\, \T }\big( \T( -,T ) , N \big)
  \xrightarrow{~\sim~} N( T ).
\]

  \item[\emph{(iii)}] \emph{\cite{kr,KZ,IY}} The functor \ $\mathbb{H}$\  \  from \emph{(i)}
    induces an equivalence
$$
  \C / [ \T[1] ]
    \xrightarrow{~\sim~}  \mod\, \T,
$$ and $\mod\, \T$ is Gorenstein of dimension at most one.
\end{itemize}
\end{thm}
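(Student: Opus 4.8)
The plan is to establish the three parts in the order (ii), (i), (iii), since each feeds the next, and to close with the Gorenstein estimate. Part (ii) is the additive Yoneda lemma: the assignment sending a natural transformation $\eta\colon\T(-,T)\to N$ to $\eta_T(\id_T)\in N(T)$ is a bijection, with inverse sending $n\in N(T)$ to the transformation whose component at $X\in\T$ is $f\mapsto N(f)(n)$; naturality in both $T$ and $N$ is a direct check. I would record this first because it is exactly what lets me pass between morphisms $T_1\to T_0$ in $\T$ and morphisms $\mathbb{H}(T_1)\to\mathbb{H}(T_0)$ of representable functors, which is the engine of the whole argument, together with the already-established equivalence $\T\xrightarrow{~\sim~}\proj(\mod\T)$.

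For part (i), fix $M\in\C$. By Proposition \ref{FT}(a) we have $\C=\T\ast\T[1]$, so $M$ sits in a triangle $T_1\xrightarrow{a}T_0\xrightarrow{b}M\xrightarrow{c}T_1[1]$ with $T_0,T_1\in\T$. Applying the homological functor $\Hom_\C(T',-)$ for $T'\in\T$ and using $\Ext^1_\C(T',T_1)=\Hom_\C(T',T_1[1])=0$ (rigidity of $\T$) yields an exact sequence $\Hom_\C(T',T_1)\to\Hom_\C(T',T_0)\to\Hom_\C(T',M)\to 0$. Letting $T'$ vary, this is precisely an exact sequence $\mathbb{H}(T_1)\to\mathbb{H}(T_0)\to\mathbb{H}(M)\to 0$ of $\T$-modules; since $\mathbb{H}(T_i)=\T(-,T_i)$ is representable, hence projective, $\mathbb{H}(M)$ is finitely presented, i.e. $\mathbb{H}(M)\in\mod\T$.

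For part (iii) I first check that $\mathbb{H}$ annihilates $[\T[1]]$: a morphism factoring through some $T[1]$ is killed because $\mathbb{H}(T[1])=\Hom_\C(-,T[1])|_\T=0$ by rigidity, so $\mathbb{H}$ descends to $\overline{\mathbb{H}}\colon\C/[\T[1]]\to\mod\T$. Density is immediate from part (i): any $M'\in\mod\T$ has a projective presentation, which via (ii) and $\T\xrightarrow{~\sim~}\proj(\mod\T)$ has the form $\mathbb{H}(T_1)\xrightarrow{\mathbb{H}(a)}\mathbb{H}(T_0)\to M'\to0$ for some $a\colon T_1\to T_0$; completing $a$ to a triangle and rerunning the computation of (i) gives $\mathbb{H}(M)\cong M'$. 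Faithfulness is short: if $\mathbb{H}(f)=0$ for $f\colon M\to N$, then evaluating at $T_0$ gives $f\circ b=0$, so $f$ factors through $c$, hence through $T_1[1]\in\T[1]$. Fullness is the crux: fix triangles $T_1\xrightarrow{a}T_0\xrightarrow{b}M\to T_1[1]$ and $S_1\xrightarrow{a'}S_0\xrightarrow{b'}N\to S_1[1]$ as above and let $\phi\colon\mathbb{H}(M)\to\mathbb{H}(N)$ be given. Lifting $\phi\circ\mathbb{H}(b)$ along the epimorphism $\mathbb{H}(b')\colon\mathbb{H}(S_0)\to\mathbb{H}(N)$ yields $\psi_0\colon\mathbb{H}(T_0)\to\mathbb{H}(S_0)$, which by (ii) equals $\mathbb{H}(g_0)$ for a unique $g_0\colon T_0\to S_0$. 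Using $\mathbb{H}(b)\mathbb{H}(a)=\mathbb{H}(ba)=0$ one checks that $\mathbb{H}(g_0a)$ lands in the image of $\mathbb{H}(a')$, so projectivity of $\mathbb{H}(T_1)$ and (ii) produce $g_1\colon T_1\to S_1$ with $a'g_1=g_0a$; completing this commuting square to a morphism of triangles gives $f\colon M\to N$ with $fb=b'g_0$, whence $\mathbb{H}(f)=\phi$ after cancelling the epimorphism $\mathbb{H}(b)$. I expect this completion step to be the main obstacle, as it is the only place the triangulated structure is used in an essential, non-formal way.

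Finally, for the Gorenstein claim I would work under a Serre functor $\mathbb{S}$ and identify injectives explicitly. Serre duality gives $D\Hom_\C(T,-)|_\T\cong\Hom_\C(-,\mathbb{S}T)|_\T=\mathbb{H}(\mathbb{S}T)$, so the indecomposable injectives of $\mod\T$ are exactly the $\mathbb{H}(\mathbb{S}T)$. Resolving $\mathbb{S}T\in\C=\T\ast\T[1]$ by a triangle $U_1\to U_0\to\mathbb{S}T\to U_1[1]$ and arguing as in (i) produces a projective presentation; the extra input is that it is a short exact sequence $0\to\mathbb{H}(U_1)\to\mathbb{H}(U_0)\to\mathbb{H}(\mathbb{S}T)\to0$, which follows from rigidity together with the Serre-duality vanishing $\Hom_\C(T',\mathbb{S}T[-1])\cong D\Ext^1_\C(T,T')=0$. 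Hence every injective has projective dimension at most one; applying the same argument in $\C^{\mathrm{op}}$ (equivalently, transporting through the duality $D$, which swaps projectives and injectives and reverses these dimensions) shows every projective has injective dimension at most one, so $\mod\T$ is Gorenstein of dimension at most one. The delicate point here, as in the fullness argument, is verifying exactness on the \emph{left} of the resolution, i.e. that its length is genuinely at most one and not merely a presentation.
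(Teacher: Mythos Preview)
Your argument for part (i) is exactly the paper's: both extract a triangle $T_1\to T_0\to M\to T_1[1]$ from $\C=\T\ast\T[1]$ and apply $\mathbb{H}$ to obtain a projective presentation of $\mathbb{H}(M)$. Note, though, that the paper proves \emph{only} part (i); parts (ii) and (iii) are not proved in the paper at all but are attributed to \cite{au1} and \cite{kr,KZ,IY} respectively. So for those parts there is no paper proof to compare against---you are supplying what the paper outsources.

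Your sketches for (ii) and for the equivalence in (iii) are correct and are precisely the arguments one finds in the cited sources: Yoneda for (ii), and for (iii) the descent of $\mathbb{H}$ through $[\T[1]]$, density via lifting a presentation, faithfulness via factoring through the connecting map, and fullness via lifting to a morphism of triangles. Nothing is missing there.

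One point to flag on the Gorenstein estimate: you explicitly introduce a Serre functor to identify the injectives of $\mod\T$ as $\mathbb{H}(\mathbb{S}T)$, but the theorem as stated in the paper carries no such hypothesis. This is not really your oversight---the references the paper cites for this statement (Keller--Reiten, Koenig--Zhu) themselves work under Serre duality or $2$-Calabi--Yau hypotheses, so the paper is arguably suppressing an assumption when it records the Gorenstein claim in this generality. If you want to match the literal statement, you would need to replace the Serre-functor identification of injectives by the $k$-duality $D=\Hom_k(-,k)$ (using Hom-finiteness) and argue for $\id\,\T(-,T)\le 1$ directly; but as a reconstruction of what the cited references actually do, your approach is accurate.
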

\proof Since $\T$ is cluster tilting, for any object $C\in\C$, there exists a triangle
$$\xymatrix{T_0\ar[r]^{f}&T_1\ar[r]^{g}&C\ar[r]^{h}&T_0[1]},$$where $T_0,T_1\in\T$. Applying the functor $\mathbb{H}$ to the above triangle, we get an exact sequence
$$\xymatrix{\mathbb{H}(T_0)\ar[r]^{f\circ}&\mathbb{H}(T_1)\ar[r]^{g\circ}&\mathbb{H}(C)\ar[r]& 0}.$$
This shows that $\mathbb{H}(C)\in\mod\T$.
 \qed
\medskip

If there exists an object $T\in \C$ such that $\T=\add T$, we obtain the following.
\begin{cor}\label{y2}
Let $T$ be a cluster tilting object in $\C$ and $\la=\End^{\emph{op}}_{\C}(T)$. Then the functor
\begin{equation}\label{w0}
\begin{array}{l}
\overline{(-)}:= {\rm Hom}_{\C}(T,-):\C\longrightarrow \mod\la\end{array}
\end{equation}
 induces an equivalence
\begin{equation}\label{w1}
\begin{array}{l}
\C/[T[1]]\s{\sim}\longrightarrow \mod\la.
\end{array}
\end{equation}
\end{cor}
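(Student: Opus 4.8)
The plan is to deduce this corollary directly from Theorem \ref{a5}(iii), combined with the classical identification of the functor category over $\add T$ with a module category over its endomorphism ring.

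\textbf{Step 1.} Since $T$ is a cluster tilting object, $\T := \add T$ is a cluster tilting subcategory of $\C$, so Theorem \ref{a5}(iii) applies and gives an equivalence $\mathbb{H}\colon \C/[\T[1]] \xrightarrow{~\sim~} \mod\T$. As $[1]$ is an autoequivalence we have $\T[1] = \add(T[1])$, so in the notation of the introduction $[\T[1]] = [T[1]]$ and hence $\C/[\T[1]] = \C/[T[1]]$.

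\textbf{Step 2.} Next I would recall the standard equivalence $\mod\T = \mod(\add T) \xrightarrow{~\sim~} \mod\la$ given by evaluation at $T$, $F \mapsto F(T)$: for a contravariant $k$-linear functor $F\colon \add T \to \Mod k$, the group $F(T)$ carries the action of $\End_\C(T)$ through $F$, and since $F$ is contravariant this is a \emph{left} module over $\End^{\mathrm{op}}_\C(T) = \la$; moreover $F$ is finitely presented over $\add T$ exactly when $F(T)$ is finitely presented over $\la$, because $\mathbb{H}$ identifies $T$ with the free module $\la$ and $\add T$ with $\proj\la$. This is a purely functor-categorical statement not involving $\C$.

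\textbf{Step 3.} Finally I would check that composing $\mathbb{H}\colon \C \to \mod\T$ with the evaluation equivalence of Step 2 recovers $\overline{(-)}$ from \eqref{w0}. Indeed, for $M \in \C$ one has $\mathbb{H}(M)(T) = \Hom_\C(T,M)$, and the $\End_\C(T)$-action induced by the functor $\mathbb{H}(M)$ on morphisms is precomposition, $g \mapsto g\circ f$ for $f\in\End_\C(T)$ --- which is precisely the left $\la$-module structure on $\Hom_\C(T,M)$ used to define $\overline{(-)}$. Hence $\overline{(-)}$ is the composite $\C \xrightarrow{\mathbb{H}} \mod\T \xrightarrow{~\sim~} \mod\la$, and by Steps 1 and 2 the induced functor $\C/[T[1]] \to \mod\la$ is an equivalence, giving \eqref{w1}. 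The only delicate point is the bookkeeping of the left/right (the ``$\mathrm{op}$'') module conventions, which is exactly why $\la$ is taken to be $\End^{\mathrm{op}}_\C(T)$; beyond this there is no genuine obstacle.
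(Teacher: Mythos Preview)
Your proposal is correct and matches the paper's approach. The paper does not give an explicit proof of Corollary~\ref{y2}; it simply records it as the specialization of Theorem~\ref{a5}(iii) to the case $\T=\add T$, with the standard identification $\mod(\add T)\simeq\mod\End^{\mathrm{op}}_{\C}(T)$ that you spell out in Steps~2 and~3.
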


This equivalence gives a close relationship between the relative cluster tilting objects in $\C$ and support $\tau$-tilting $\la$-modules.

\begin{thm}\label{y3}{\emph{\cite[Theorem 3.6]{YZ}}}
Let $\C$ be a triangulated category with a Serre functor $\mathbb{S}$ and a cluster tilting object $T$, and let $\la={\rm End}^{\emph{op}}_{\C}(T)$. Then the functor (\ref{w0}) induces the following bijections
\begin{eqnarray*}
T[1]\mbox{-}{\rm rigid}\,\C & \s{(a)}\longleftrightarrow & \tau\mbox{-}{\rm rigid}\la,\\
T[1]\mbox{-}{\rm cluster\, tilt}\,\C & \s{(b)}\longleftrightarrow & {\rm s}\tau\mbox{-}{\rm tilt}\la.\\
\end{eqnarray*}
\end{thm}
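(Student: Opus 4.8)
\emph{Sketch of proof.} The plan is to carry the Adachi--Iyama--Reiten argument from the $2$-Calabi--Yau situation into the present generality, with ordinary rigidity replaced everywhere by $T[1]$-rigidity. The key technical input will be a homological formula: for every $X\in\C$ with no direct summand in $\add T[1]$ and every $Y\in\C$ there is a functorial isomorphism
\[
  \Hom_\la\bigl(\overline Y,\ \tau_\la\overline X\bigr)\ \cong\ D\,[T[1]](X,Y[1]),
\]
where $\tau_\la$ is the Auslander--Reiten translation of $\mod\la$ and $D$ the $k$-duality. Once this is in hand, the case $Y=X$ shows at once that $X$ is $T[1]$-rigid if and only if $\overline X$ is a $\tau$-rigid $\la$-module, which is the heart of the bijection (a).

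To prove the formula I would argue as follows. Since $\T=\add T$ is cluster tilting, $\C=\T\ast\T[1]$ by Proposition~\ref{FT}(a), so a minimal right $\add T$-approximation $g\colon T_0\to X$ extends to a triangle $T_1\xrightarrow{f}T_0\xrightarrow{g}X\xrightarrow{h}T_1[1]$ with $T_0,T_1\in\add T$ (that $T_1\in\add T$ follows from $\Ext^1_\C(\add T,\add T)=0$). A short Krull--Remak--Schmidt argument, using that $X$ has no summand in $\add T[1]$ so that $\overline{T'}\neq 0$ for each nonzero $T'\in\add T$, shows that $\overline{T_1}\xrightarrow{\overline f}\overline{T_0}\to\overline X\to 0$ is then a \emph{minimal} projective presentation in $\mod\la$. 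Hence $\tau_\la\overline X=D\,\mathrm{Tr}\,\overline X$ with transpose $\mathrm{Tr}\,\overline X=\co\bigl((\overline f)^{\ast}\bigr)$, and the adjunction $\Hom_\la\bigl(\overline Y,D(-)\bigr)\cong D\bigl((-)\otimes_\la\overline Y\bigr)$ reduces everything to computing $\mathrm{Tr}\,\overline X\otimes_\la\overline Y$. Using the projective presentation $(\overline{T_0})^{\ast}\xrightarrow{(\overline f)^{\ast}}(\overline{T_1})^{\ast}\to\mathrm{Tr}\,\overline X\to 0$ and the natural isomorphisms $(\overline{T_i})^{\ast}\otimes_\la\overline Y\cong\Hom_\C(T_i,Y)$ (coming from $\add T\xrightarrow{\sim}\proj\la$), right exactness of $-\otimes_\la\overline Y$ identifies $\mathrm{Tr}\,\overline X\otimes_\la\overline Y$ with $\co\bigl(\Hom_\C(T_0,Y)\xrightarrow{-\circ f}\Hom_\C(T_1,Y)\bigr)$; finally, applying $\Hom_\C(-,Y)$ to the triangle and using $\Ext^1_\C(\add T,\add T)=0$ identifies this last cokernel with $[T[1]](X,Y[1])$. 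Composing the isomorphisms gives the formula.

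Granting it, I would finish the bijection (a). For a basic $T[1]$-rigid object $X$, write $X=X'\oplus Q[1]$ with $Q\in\add T$ and $X'$ having no summand in $\add T[1]$, and set $P:=\overline Q\in\proj\la$, so that $\overline X=\overline{X'}$. Splitting $[T[1]](X,X[1])$ into its four components, one sees that $[T[1]](X',Q[2])$ and $[T[1]](Q[1],Q[2])$ vanish automatically (since $\Hom_\C(T',Q[1])=\Ext^1_\C(T',Q)=0$ for $T'\in\add T$), while $[T[1]](Q[1],X'[1])=\Hom_\C(Q,X')\cong\Hom_\la(P,\overline X)$; therefore $X$ is $T[1]$-rigid precisely when $X'$ is $T[1]$-rigid and $\Hom_\la(P,\overline X)=0$, i.e.\ precisely when $(\overline X,P)$ is a $\tau$-rigid pair. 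Thus $\Phi\colon X\mapsto(\overline X,P)$ is well defined. For bijectivity, recall the equivalence $\C/[T[1]]\xrightarrow{\sim}\mod\la$ of Corollary~\ref{y2}, under which an object becomes $0$ exactly when it lies in $\add T[1]$, together with the equivalence $\add T\xrightarrow{\sim}\proj\la$. Hence every $\tau$-rigid pair $(M,P)$ of $\la$ is $\Phi(X'\oplus Q[1])$ for $X'$ with $\overline{X'}=M$ and no summand in $\add T[1]$ and $Q\in\add T$ with $\overline Q=P$, and $\Phi$ is injective because an isomorphism in $\C/[T[1]]$ between objects with no summand in $\add T[1]$ lifts to an isomorphism in $\C$ (the discrepancy being a radical, hence nilpotent, endomorphism).

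Finally, to pass from (a) to (b), only a count of indecomposable summands is needed. From $X=X'\oplus Q[1]$ and the fact that $\overline{(-)}$ gives a bijection between the indecomposables of $\C$ outside $\add T[1]$ and the indecomposables of $\mod\la$, we get $|X|=|X'|+|Q|=|\overline X|+|P|$, while $|\la|=|\overline T|=r(\C)$. Hence $X$ is $T[1]$-cluster tilting (that is, $T[1]$-rigid with $|X|=r(\C)$) if and only if $(\overline X,P)$ is a support $\tau$-tilting pair of $\la$; since a support $\tau$-tilting module determines its pair up to isomorphism, restricting $\Phi$ yields the bijection (b). The one delicate point in all of this is the homological formula of the second paragraph: one has to control the minimality of the projective presentation so that $\mathrm{Tr}\,\overline X$ picks up no spurious projective (dually, injective) summand, and one has to carry out the two cokernel computations there — in particular the identification of the relevant kernel inside $\Hom_\C(X,Y[1])$ with $[T[1]](X,Y[1])$ — with due care.
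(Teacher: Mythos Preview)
Your argument is correct and is essentially the approach of \cite{YZ}, which the present paper cites without reproducing a proof; the paper itself does not prove Theorem~\ref{y3} but merely quotes it as a preliminary result. The key formula $\Hom_\la(\overline Y,\tau_\la\overline X)\cong D\,[T[1]](X,Y[1])$ (for $X$ without summands in $\add T[1]$) is exactly the engine used in \cite{YZ}, and your decomposition $X=X'\oplus Q[1]$ together with the summand count for part~(b) is the standard Adachi--Iyama--Reiten reduction.

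It is worth noting, however, that the paper later establishes subcategory-level generalizations (Theorems~\ref{a7} and~\ref{a9}) by a genuinely different route: instead of your homological formula, those proofs use the ``Property~(S)'' characterization of $\tau$-rigidity from \cite{ijy} and work directly with projective presentations (Lemma~\ref{a6}), never invoking the Auslander--Reiten translation $\tau_\la$ at all. That approach extends to cluster tilting \emph{subcategories} $\T$ for which there is no finite-dimensional endomorphism algebra, whereas your argument genuinely needs $\la=\End_\C^{\mathrm{op}}(T)$ to be an algebra so that $\tau_\la$ and the transpose are available. The trade-off is that your proof stays closer to classical $\tau$-tilting theory and makes the module-theoretic content transparent, while the paper's later argument is more categorical and more general. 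One small slip of phrasing: in your final paragraph you refer to identifying a ``kernel'' inside $\Hom_\C(X,Y[1])$ with $[T[1]](X,Y[1])$; what you actually compute (and what is needed) is that this subspace arises as the \emph{image} of $-\circ h[-1]$, equivalently the cokernel of $-\circ f$.
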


\section{Ghost cluster tilting subcategories}\label{sect:1}
\setcounter{equation}{0}

In this section, our aim is to develop a basic theory of ghost cluster tilting subcategories in a triangulated category with cluster tilting subcategories. We first give the definitions of related subcategories and then discuss connections between them.
\begin{defn}\label{a2}
Let $\C$ be a triangulated category with a cluster tilting subcategory.
\begin{itemize}
\item[\emph{(i)}] A subcategory $\X$ in $\C$ is called  \emph{ghost rigid }if there exists a cluster tilting subcategory $\T$ such that $[\T[1]](\X, \X[1])=0$. In this case, $\X$ is also called $\T[1]$\emph{-rigid}.

\item[\emph{(ii)}] A subcategory $\X$ in $\C$ is called \emph{maximal ghost rigid} if there exists a cluster tilting subcategory $\T$ such that $\X$ is $\T[1]$-rigid and
$$[\T[1]](\X\vee\add M, (\X\vee\add M)[1])=0 \text{ implies } M\in\X. $$
In this case, $\X$ is also called \emph{maximal} $\T[1]$\emph{-rigid}.

 \item[\emph{(iii)}] A subcategory $\X$ in $\C$ is called {\rm weak ghost cluster tilting} if there exists a cluster tilting subcategory $\cal T$ such that $\T\subseteq \X[-1]\ast\X$ and $$\X=\{M\in\C\ |\ [\T[1]](M, \X[1])=0 \mbox{ and } [\T[1]](\X, M[1])=0 \ \}.$$
     In this case, $\X$ is also called \emph{weak} $\T[1]$\emph{-cluster tilting}.

\item[\emph{(iv)}] A subcategory $\X$ in $\C$ is called {\rm ghost cluster tilting} if $\X$ is contravariantly finite in $\C$ and there exists a cluster tilting subcategory $\cal T$ such that $$\X=\{M\in\C\ |\ [\T[1]](M, \X[1])=0\mbox{ and } [\T[1]](\X, M[1])=0 \ \}.$$
     In this case, $\X$ is also called $\T[1]$\emph{-cluster tilting}.

\item[\emph{(v)}] An object $X$ is called $\T[1]$-rigid, maximal $\T[1]$-rigid, weak $\T[1]$-cluster tilting, or $\T[1]$-cluster tilting if $\add X$ is  $\T[1]$-rigid, maximal $\T[1]$-rigid, weak $\T[1]$-cluster tilting, or $\T[1]$-cluster tilting respectively.
\end{itemize}
\end{defn}

\medskip

\begin{rem} We will show in the last section that ghost cluster tilting objects are exactly the relative cluster tilting objects defined by the first and the third author in\cite{YZ}.\end{rem}
\medskip

From now on to the end of the section, we prove some properties of ghost cluster tilting subcategories. We first give an observation.

\begin{prop}\label{prop:ctsubcat}
 Cluster-tilting subcategories of $\C$ are ghost cluster tilting. More precisely, cluster tilting subcategories in $\C$ are $\T[1]$-cluster tilting, where $\T$ is a  cluster tilting subcategory.
\end{prop}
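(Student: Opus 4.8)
The plan is to verify directly that a cluster tilting subcategory $\T$ satisfies Definition \ref{a2}(iv) with respect to itself. We must check two things: that $\T$ is contravariantly finite in $\C$ (which is part of the hypothesis that $\T$ is cluster tilting, by Remark \ref{rem:ctsubcat}), and that $\T$ coincides with the set
\[
\{M\in\C\ |\ [\T[1]](M, \T[1])=0\ \text{ and }\ [\T[1]](\T, M[1])=0\}.
\]
The inclusion of $\T$ into this set is immediate: if $M\in\T$, then any morphism in $[\T[1]](M,\T[1])$ or $[\T[1]](\T,M[1])$ factors through $\T[1]$, and since ${\rm Ext}^1_\C(\T,\T)=0$ by rigidity, such compositions vanish. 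More precisely, a morphism $M\to \T'[1]$ factoring through some $T''[1]$ with $T''\in\T$ gives a factorization through ${\rm Hom}_\C(M,T''[1])={\rm Ext}^1_\C(M,T'')=0$ when $M\in\T$, hence is zero; similarly for the other condition.

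For the reverse inclusion, suppose $M\in\C$ satisfies $[\T[1]](M,\T[1])=0$ and $[\T[1]](\T,M[1])=0$. The key observation is that for a cluster tilting subcategory $\T$, every morphism in ${\rm Ext}^1_\C(M,T)$ with $T\in\T$ automatically lies in $[\T[1]]$: by Proposition \ref{FT}(a) we have $\C=\T\ast\T[1]$, so $M$ fits in a triangle $T_0\to T_1\to M\to T_0[1]$ with $T_0,T_1\in\T$, and applying ${\rm Hom}_\C(-,T)$ shows that any morphism $M\to T[1]$ factors as $M\to T_0[1]\to T[1]$, i.e. it factors through $T_0[1]\in\T[1]$. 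Therefore the hypothesis $[\T[1]](M,\T[1])=0$ forces ${\rm Ext}^1_\C(M,\T)=0$, and by the definition of cluster tilting subcategory this gives $M\in\T$. (The condition $[\T[1]](\T,M[1])=0$ is then not even needed, or can be used symmetrically via the other equality in the definition of cluster tilting.) Finally, the requirement $\T\subseteq \T[-1]\ast\T$ appearing in the weak version is automatic here since $\T[-1]\ast\T\supseteq \T$ trivially (take the zero object as the left-hand term), so $\T$ is in particular weak $\T[1]$-cluster tilting as well.

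I do not expect a genuine obstacle in this argument; the only point requiring a moment's care is the claim that morphisms $M\to T[1]$ with $T\in\T$ always factor through $\T[1]$, which is exactly where the identity $\C=\T\ast\T[1]$ of Proposition \ref{FT}(a) enters. Everything else is a direct unwinding of the definitions of rigidity and of cluster tilting subcategory, together with the characterization recalled in Remark \ref{rem:ctsubcat} to handle the finiteness condition.
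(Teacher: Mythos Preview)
Your argument is correct, but it proves a weaker statement than the paper does. You show that a cluster tilting subcategory $\T$ is $\T[1]$-cluster tilting \emph{with respect to itself}, and this indeed establishes the first sentence of the proposition (``cluster-tilting subcategories are ghost cluster tilting''), since ghost cluster tilting only requires the existence of \emph{some} cluster tilting $\T$ making the definition work. Your proof of this case is clean: the key observation that every morphism $M\to T[1]$ factors through $\T[1]$ (via the triangle $T_0\to T_1\to M\to T_0[1]$ coming from $\C=\T\ast\T[1]$ and the vanishing ${\rm Ext}^1_\C(\T,\T)=0$) immediately collapses the relative rigidity condition $[\T[1]](M,\T[1])=0$ to the absolute one ${\rm Ext}^1_\C(M,\T)=0$.

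However, the ``more precisely'' clause in the proposition, as interpreted and proved in the paper, asserts that an \emph{arbitrary} cluster tilting subcategory $\X$ is $\T[1]$-cluster tilting for the fixed ambient $\T$---not merely for $\T=\X$. The paper's proof therefore takes $\X\neq\T$ in general: given $M$ with $[\T[1]](M,\X[1])=0=[\T[1]](\X,M[1])$, it builds a triangle $T_1\to T_0\to M\to T_1[1]$ from $\T$, passes $T_0$ through a left $\X$-approximation, applies the octahedral axiom to compare the two, and concludes $M\in\X$ via a splitting argument. Your shortcut of reducing $[\T[1]](M,\X[1])=0$ to ${\rm Ext}^1_\C(M,\X)=0$ does not work when $\X\neq\T$: a morphism $M\to X[1]$ with $X\in\X$ need not factor through $\T[1]$, since the vanishing ${\rm Hom}_\C(T_1,X[1])=0$ you would need holds only when $X\in\T$. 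So the paper's more involved argument genuinely buys the stronger conclusion; your approach is more elementary but only covers the special case $\X=\T$.
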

\begin{proof}
Let $\X$ be an arbitrary cluster tilting subcategory in $\C$. Clearly, $\X$ is contravariantly finite and $$\X\subseteq \{M\in\C\ |\ [\T[1]](\X, M[1])=0= [\T[1]](M, \X[1])\}.$$
For any object $M\in \{M\in\C\ |\ [\T[1]](\X, M[1])=0= [\T[1]](M, \X[1])\}$, since $\T$ is cluster tilting,
there exists a triangle
$$\xymatrix{T_1\ar[r]^{f}&T_0\ar[r]^{g}&M\ar[r]^{h\;\,}&T_1[1]},$$
where $T_0,T_1\in\T$. Take a left $\X$-approximation of $T_0$ and complete it to a triangle
$$\xymatrix{T_0\ar[r]^{u}&X_1\ar[r]^{v}&X_2\ar[r]^{w\;\,}&T_0[1]},$$
where $X_1\in\X$. Since $\X$ is cluster tilting, we know that $X_2\in\X$.
By the octahedral axiom, we have a commutative diagram
$$\xymatrix{
T_1\ar[r]^{f}\ar@{=}[d]&T_0\ar[r]^{g}\ar[d]^{u}&M\ar[r]^{h}\ar[d]^{a}&T_1[1]\ar@{=}[d]\\
T_1\ar[r]^{x=uf}&X_1\ar[r]^{y}\ar[d]^{v}&N\ar[r]^{z}\ar[d]^{b}&T_1[1]\\
&X_2\ar@{=}[r]\ar[d]^{w}&X_2\ar[d]^{c}\\
&T_0[1]\ar[r]^{g[1]}&M[1]}$$
of triangles. We claim that $x$ is a left $\X$-approximation of $T_1$.
Indeed, for any
morphism $\alpha\colon T_1\to X'$, where $X'\in\X$, since $\alpha\circ h[-1]\in[\T](M[-1],\X)=0$, there exists a morphism $\beta\colon T_0\to X'$ such that $\alpha=\beta f$. Since $u$ is a left $\X$-approximation of $T_0$ and $X'\in\X$,  there exists a morphism $\gamma\colon X_1\to X'$ such that
$\beta=\gamma u$ and then $\alpha=\gamma(uf)=\gamma x$. This shows that $x$ is a left $\X$-approximation of $T_1$.
Note that $\X$ is cluster tilting. Thus $N\in\X$. Since $c=g[1]w\in [\T[1]](\X,M[1])=0$.
This shows that the triangle
$$\xymatrix{M\ar[r]^{a}&N\ar[r]^{b}&X_2\ar[r]^{c\;\;}&M[1].}$$
splits.  It follows that $M$ is a direct summand of $N$ and then $M\in\X$. Thus
$$\X= \{M\in\C\ |\ [\T[1]](\X, M[1])=0= [\T[1]](M, \X[1])\}$$ and hence $\X$ is $\T[1]$-cluster tilting.
\end{proof}

The following example shows that ghost cluster tilting subcategories need not be cluster tilting.

\begin{exm}
{\upshape Let $A=kQ/I$ be a self-injective algebra given by the quiver $$Q:
\setlength{\unitlength}{0.03in}\xymatrix{1 \ar@<0.5ex>[r]^{\alpha}_{\ } & 2\ar@<0.5ex>[l]^{\beta}}$$ and $I=\langle\alpha\beta\alpha\beta, \beta\alpha\beta\alpha\rangle$. Let $\C$ be the stable module category
${\rm \underline{\mod}}A$ of $A$. This is a triangulated category whose Auslander-Reiten quiver is the following:}
\end{exm}
\vspace{-2ex}
\begin{center}
$
\xymatrix@!@C=0.01cm@R=0.2cm{
&\txt{1\\2\\1\\2}\ar[dr]&&\txt{2\\1\\2\\1}\ar[dr]&\\
*+[o][F-]{\txt{2\\1\\2}}\ar[ur]\ar[dr]\ar@{--}[u]&&\txt{1\\2\\1}\ar@{.>}[ll]\ar[ur]\ar[dr]&&\txt{2\\1\\2}\ar@{.>}[ll]\ar@{--}[u]\\
&\txt{2\\1}\ar[dr]\ar[ur]&&\txt{1\\2}\ar[dr]\ar[ur]\ar@{.>}[ll]&\\
 1\ar[ur]\ar@{--}[uu]&&*+[o][F-]{2}\ar@{.>}[ll]\ar[ur]&&1\ar@{.>}[ll]\ar@{--}[uu] }
$
\end{center}
where the leftmost and rightmost columns are identified.{\setlength{\jot}{-3pt}
It is easy to see that $\T:=\add(2\oplus \begin{aligned}2\\1\\2\end{aligned})$ is a cluster tilting subcategory of $\C$.
Note that $\X:=\add(2\oplus \begin{aligned}1\\2\end{aligned})$ is $\T[1]$-tilting of $\C$, but not cluster tilting, since $\Hom(\begin{aligned}1\\2\end{aligned},\begin{aligned}1\\2\end{aligned}[1])=\Hom(\begin{aligned}1\\2\end{aligned},\begin{aligned}1\\2\end{aligned})\neq0$.
}

\medskip

In order to give a precise relation between cluster tilting and ghost cluster tilting subcategories, we assume that $\C$ has a Serre functor $\mathbb{S} $ \cite{BK}. We recall that $F=\tau_{\C}^{-1}[1]=\mathbb{S}^{-1}[2]$. The following result gives a characterisation of cluster tilting subcategories in term of ghost cluster tilting subcategories, which implies that in $2$-Calabi-Yau triangulated categories, ghost cluster tilting subcategories coincide with cluster tilting subcategories.

\begin{thm}\label{thm:F-stable}
Let $\C$ be a triangulated category with a Serre functor $\mathbb{S}$ and a cluster tilting subcategory. Then $F$-stable ghost cluster tilting subcategories of $\C$ are precisely cluster tilting subcategories.
\end{thm}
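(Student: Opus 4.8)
The plan is to prove the two inclusions separately. One direction is essentially Proposition~\ref{prop:ctsubcat} together with Proposition~\ref{FT}(b): if $\X$ is a cluster tilting subcategory, then it is ghost cluster tilting (take $\T = \X$ in Definition~\ref{a2}), and $F\X = \X$ by Proposition~\ref{FT}(b), so every cluster tilting subcategory is an $F$-stable ghost cluster tilting subcategory. Hence the real content is the converse: an $F$-stable ghost cluster tilting subcategory $\X$ must be cluster tilting. So fix an $F$-stable $\T[1]$-cluster tilting subcategory $\X$ for some cluster tilting subcategory $\T$, with $\T \subseteq \X[-1]\ast\X$ and $\X = \{M \mid [\T[1]](M,\X[1]) = 0 = [\T[1]](\X,M[1])\}$.

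First I would record the $F$-stability as a concrete statement about morphisms. Since $\C$ has a Serre functor, Serre duality gives $\Hom_\C(A,B) \simeq D\Hom_\C(B,\mathbb{S}A)$, equivalently $\Ext^1_\C(A,B) \simeq D\Hom_\C(B, FA[-1]) = D\Hom_\C(B[1],FA) \simeq D\overline{\Hom}_\C(FA,B[1])$ — more precisely the Auslander--Reiten formula gives $\Ext^1_\C(A,B) \simeq D\big(\Hom_\C(B,A)/[\text{something}]\big)$; the key classical fact I want is that $[\T[1]](A,B[1])$ and the genuine $\Ext^1$ between appropriately $F$-twisted objects are related so that, when $F\X = \X$ and $F\T = \T$, the ghost vanishing condition $[\T[1]](M,\X[1]) = 0$ upgrades to honest $\Ext^1_\C(M,\X) = 0$. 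This is the standard phenomenon (used in \cite{YZ}, and going back to the comparison between relative and absolute rigidity via the Serre functor): for $T \in \T$, the subgroup $[\add T[1]](M, X[1]) \subseteq \Ext^1_\C(M,X)$ is identified under Serre duality with a piece coming from $\Hom_\C(X, FT[\text{shift}])$-type data, and $F$-stability of $\T$ and $\X$ forces that piece to coincide with the whole group. I would carry this out carefully to conclude: $F$-stable $\X$ is $\T[1]$-rigid $\iff$ $\X$ is rigid, i.e. $\Ext^1_\C(\X,\X) = 0$, and more generally the defining equality for $\X$ becomes $\X = \{M \mid \Ext^1_\C(M,\X) = 0 = \Ext^1_\C(\X,M)\}$.

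Once that translation is in hand, the rest is a matter of verifying the definition of a cluster tilting subcategory (Definition, item (3), or the Koenig--Zhu criterion in Remark~\ref{rem:ctsubcat}). We already get $\X = \{M \mid \Ext^1_\C(\X,M) = 0\} = \{M \mid \Ext^1_\C(M,\X) = 0\}$ from the previous paragraph, so it only remains to check that $\X$ is functorially finite — by Remark~\ref{rem:ctsubcat} it suffices to check $\X$ is contravariantly finite, which is part of the hypothesis that $\X$ is ghost cluster tilting (Definition~\ref{a2}(iv)). (If one instead starts from the weak version, one uses $\T \subseteq \X[-1]\ast\X$ and the functorial finiteness of $\T$ to build right $\X$-approximations in the usual way, exactly as in the proof of Proposition~\ref{prop:ctsubcat}; I would include this as a remark.) Thus $\X$ satisfies all the conditions to be cluster tilting.

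The main obstacle I expect is the Serre-duality bookkeeping in the middle step: making precise the identification of the ideal-factoring subgroup $[\T[1]](M, X[1])$ inside $\Ext^1_\C(M,X)$ with a distinguished subspace under the Auslander--Reiten formula, and then seeing that $F(\T) = \T$ together with $F(\X) = \X$ forces this subspace to be everything. One has to be attentive to indecomposables versus additive closures and to the exact form of the isomorphism $\Ext^1_\C(A,B) \cong D\Hom_\C(B, \tau_\C A)$; the cleanest route is probably to fix indecomposables $M \in \add\X$ (or rather to argue it suffices to treat the case $\X$ replaced by an indecomposable summand) and decompose a morphism $M \to X[1]$ through $\add\T[1]$ explicitly, then dualize. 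Everything else — the octahedral-axiom approximation arguments, the appeal to Remark~\ref{rem:ctsubcat} — is routine and parallels Proposition~\ref{prop:ctsubcat}.
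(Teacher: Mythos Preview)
Your overall strategy coincides with the paper's: the easy direction via Proposition~\ref{prop:ctsubcat} and Proposition~\ref{FT}(b), then for the converse use Serre duality to pass from $\T[1]$-rigidity of $\X$ to genuine rigidity, and finish with the Koenig--Zhu criterion (Remark~\ref{rem:ctsubcat}). The paper isolates the Serre-duality ingredient as Lemma~\ref{lem:T[1]-rigid}: $\Hom_\C(M,N[1])=0$ if and only if both $[\T[1]](M,N[1])=0$ and $[\T[1]](N,\tau_\C M)=0$.

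There is one step that does not go through as you state it. Your claim that ``the ghost vanishing condition $[\T[1]](M,\X[1]) = 0$ upgrades to honest $\Ext^1_\C(M,\X) = 0$'' for a general $M$ is not what Lemma~\ref{lem:T[1]-rigid} yields: that lemma also requires $[\T[1]](\X,\tau_\C M)=0$, and $F$-stability of $\X$ and $\T$ gives no control over $\tau_\C M$ when $M\notin\X$. So the pointwise translation you envision for the defining equality fails, and the obstacle you anticipate in your last paragraph (dualizing factorizations through $\T[1]$ for arbitrary $M$) is genuine and not resolvable that way. The paper sidesteps it: after proving $\X$ is rigid (here Lemma~\ref{lem:T[1]-rigid} does apply, since $F\X=\X$ gives $\tau_\C\X=\X[1]$ and both hypotheses hold for $M,N\in\X$), it only proves the containment $\{M\mid\Ext^1_\C(\X,M)=0\}\subseteq\X$, via the direct computation
\[
\Hom_\C(M,\X[1])\simeq D\Hom_\C(\X[1],\mathbb{S}M)\simeq D\Hom_\C(\X,M[1])=0
\]
using $F\X=\X$, so that $M$ satisfies both ghost conditions and hence lies in $\X$. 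Equivalently, once rigidity of $\X$ is established you can simply run the sandwich
\[
\X\subseteq\{M\mid\Ext^1_\C(M,\X)=0=\Ext^1_\C(\X,M)\}\subseteq\{M\mid[\T[1]](M,\X[1])=0=[\T[1]](\X,M[1])\}=\X,
\]
which gives your claimed equality without ever needing the upgrade for general $M$.
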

To prove this theorem, we need the following easy observation.
\begin{lem}\label{lem:T[1]-rigid}
For two objects $M$ and $N$ in $\C$, $[\T[1]](M, N[1])=0$ and $[\T[1]](N, \tau_\C M)=0$ if and only if ${\rm Hom}_{\C}(M, N[1])=0$. In particular, if $\C$ is 2-CY, then $M$ is $\T[1]$-rigid if and only if $M$ is rigid.
\end{lem}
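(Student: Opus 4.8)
The plan is to exploit the defining adjunction-type isomorphism of the Serre functor to rewrite the vanishing of $\Hom_\C(M, N[1])$ in a form that separates off the part of the Hom-space that factors through $\T[1]$. Recall that $\tau_\C = \mathbb{S}[-1]$, so a morphism $M \to N[1]$ corresponds under Serre duality to a linear functional on $\Hom_\C(N[1], \mathbb{S}M) \cong \Hom_\C(N, \mathbb{S}M[-1]) = \Hom_\C(N, \tau_\C M)$. Thus $\Hom_\C(M, N[1])$ and $\Hom_\C(N, \tau_\C M)$ are $k$-dual to each other. The key idea is then to identify, inside $\Hom_\C(M, N[1])$, the subspace $[\T[1]](M, N[1])$ of morphisms factoring through $\add(\T[1])$, and to show that under Serre duality its annihilator (equivalently, the complementary information) is exactly $[\T[1]](N, \tau_\C M)$.

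The main computational step I would carry out is the following. Let $f \colon M \to N[1]$ factor as $M \xrightarrow{a} T[1] \xrightarrow{b} N[1]$ with $T \in \T$. Under the Serre duality pairing $\Hom_\C(M, N[1]) \times \Hom_\C(N[1], \mathbb{S}M) \to k$, the pairing of such an $f$ with a morphism $g \colon N[1] \to \mathbb{S}M$ is the trace of the composite $N[1] \xrightarrow{g} \mathbb{S}M \xrightarrow{\mathbb{S}a} \mathbb{S}(T[1])$ combined appropriately — the point being that $f$ factors through $\T[1]$ if and only if, after transporting $g$ to $\Hom_\C(N, \tau_\C M)$, the relevant composite factors through $\T$ (using $\mathbb{S}(\T[1])[-2] = \tau_\C \T[1] \cdot$, or more precisely that $\T$ is sent to $\T$ by $\mathbb{S}[-1]$-type functors up to the shift). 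So I would use the functoriality of $\mathbb{S}$ to transport the factorization $M \to T[1] \to N[1]$ into a factorization of the dual morphism $N \to \tau_\C M$ through $\mathbb{S}^{-1}$ applied to $T[1]$, shifted. Concretely: the annihilator of $[\T[1]](M, N[1])$ in the dual space $\Hom_\C(N, \tau_\C M)$ is precisely $[\T[1]](N, \tau_\C M)$ — this is the heart of the lemma. Granting this, the forward direction is immediate: if $[\T[1]](M, N[1]) = 0$ and $[\T[1]](N, \tau_\C M) = 0$, then $[\T[1]](M, N[1])$ and its annihilator both vanish, forcing $\Hom_\C(M, N[1]) = 0$. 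Conversely, if $\Hom_\C(M, N[1]) = 0$ then both its subspace $[\T[1]](M, N[1])$ and the dual space $\Hom_\C(N, \tau_\C M)$ (hence its subspace $[\T[1]](N, \tau_\C M)$) vanish.

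I would organize the argument so that the duality statement $[\T[1]](M, N[1])^{\perp} = [\T[1]](N, \tau_\C M)$ inside the nondegenerate pairing $\Hom_\C(M, N[1]) \cong D\Hom_\C(N, \tau_\C M)$ is isolated as the single technical claim; everything else is then pure linear algebra about a perfect pairing between two finite-dimensional vector spaces (if $V \cong DW$ perfectly, and $U \le V$, $U' \le W$ with $U = (U')^\perp$, then $U = 0$ and $U' = 0$ together are equivalent to $V = 0$). The ``in particular'' clause is then the specialization $\mathbb{S} = [2]$, whence $\tau_\C M = M$, so the two hypotheses $[\T[1]](M, M[1]) = 0$ from both sides collapse to the single condition that $M$ is $\T[1]$-rigid, which by the above is equivalent to $\Hom_\C(M, M[1]) = 0$, i.e. $M$ rigid.

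The step I expect to be the main obstacle is verifying the claim $[\T[1]](M, N[1])^\perp = [\T[1]](N, \tau_\C M)$ — in particular getting the direction ``$\supseteq$'' or ``$\subseteq$'' that is not formal. The cleanest route is probably to use that for a cluster tilting subcategory $\T$ one has $F\T = \T$ (Proposition \ref{FT}(b)), equivalently $\tau_\C \T = \T[-1]$ and $\mathbb{S}\T = \T[1]$; this lets one check directly that a morphism $N \to \tau_\C M$ factoring through $\T$ dualizes to a morphism $M \to N[1]$ factoring through $\mathbb{S}\T[-1] = \T[... ]$, i.e. through $\T[1]$, and that this correspondence is an honest bijection between the two subspaces. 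Tracking the shifts carefully ($\mathbb{S}\T = \T[1]$, so $\mathbb{S}(T[1]) = T[2]$, and $\tau_\C M = \mathbb{S}M[-1]$) is the fiddly part, but it is a finite bookkeeping exercise once the $F$-stability $\mathbb{S}\T = \T[1]$ is in hand.
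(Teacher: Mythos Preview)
Your strategy is correct, and the perpendicularity claim $[\T[1]](M,N[1])^\perp=[\T[1]](N,\tau_\C M)$ under the Serre pairing $\Hom_\C(M,N[1])\cong D\Hom_\C(N,\tau_\C M)$ does hold; once you have it, the linear-algebra step ($\dim U+\dim U^\perp=\dim V$, hence $U=U^\perp=0\Leftrightarrow V=0$) finishes cleanly. The paper itself gives no argument here---it defers entirely to \cite[Proposition~3.4]{YZ}---so there is nothing to compare your route against, but what you outline would constitute a complete proof once two points are tightened.

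First, your shift bookkeeping slips twice. Since $F=\mathbb{S}^{-1}[2]$, the identity $F\T=\T$ gives $\mathbb{S}\T=\T[2]$, not $\T[1]$; the fact you actually need is the equivalent statement $\tau_\C\T=\T[1]$. Similarly, in the $2$-CY case $\tau_\C=\mathbb{S}[-1]=[1]$, so $\tau_\C M=M[1]$, not $M$; fortunately the conclusion survives because $[\T[1]](M,\tau_\C M)=[\T[1]](M,M[1])$ is literally the $\T[1]$-rigidity condition.

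Second, the perpendicularity claim is best verified not by chasing individual factorizations through $\mathbb{S}$, but by using one approximation triangle. Take $T_0\xrightarrow{\alpha}M\xrightarrow{\beta}T_1[1]\to T_0[1]$ with $T_i\in\T$; then $\beta$ is a left $\T[1]$-approximation of $M$, so
\[
[\T[1]](M,N[1])=\ker\Big(\Hom_\C(M,N[1])\xrightarrow{\;\circ\alpha\;}\Hom_\C(T_0,N[1])\Big).
\]
Applying $\tau_\C$ to the triangle and using $\tau_\C\T=\T[1]$, the map $\tau_\C\alpha\colon\tau_\C T_0\to\tau_\C M$ is a right $\T[1]$-approximation of $\tau_\C M$, so
\[
[\T[1]](N,\tau_\C M)=\mathrm{Im}\Big(\Hom_\C(N,\tau_\C T_0)\xrightarrow{\;\tau_\C\alpha\circ\;}\Hom_\C(N,\tau_\C M)\Big).
\]
By functoriality of Serre duality the second map is identified with $D(\circ\alpha)$, and for any linear map $\phi$ one has $\mathrm{Im}(D\phi)=(\ker\phi)^\perp$. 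This gives the perpendicularity exactly, with no residual ambiguity about which inclusion is ``not formal''.
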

\begin{proof}
 Using similar arguments as in the proof of Proposition 3.4 in \cite{YZ}.
\end{proof}
Now we are ready to prove Theorem \ref{thm:F-stable}.
\medskip

It follows from Theorem \ref{FT} and Proposition \ref{prop:ctsubcat} that cluster tilting subcategories are $F$-stable ghost cluster tilting. Now prove the other direction. Let $\X$ be a $\T[1]$-cluster tilting subcategory satisfying $F\X=\X$, where $\T$ is a cluster tilting subcategory. It follows that $\tau_{\C}\X=\X[1]$.
\medskip

(1) We show that $\X$ is a rigid subcategory of $\C$. For any two objects $M, N\in \X$, since $\X$ is $\T[1]$-tilting, we have
\begin{equation}\label{F-stabl1}
\begin{array}{l}
[\T[1]](M, N[1])=0.
\end{array}
\end{equation}
Similarly, since $\tau_{\C}\X=\X[1]$, we have that $\tau_{\C}M=M'[1]$, where $M'\in \X$. It follows that
   \begin{equation}\label{F-stabl2}
\begin{array}{l}
[\T[1]](N, \tau_\C M)  = [\T[1]](N, M'[1])=0.
\end{array}
\end{equation}
By Lemma \ref{lem:T[1]-rigid}, equalities (\ref{F-stabl1}) and (\ref{F-stabl2}) imply that $\Hom_\C(M, N[1])=0$.
\medskip

(2) We show that $\X=\{M\in\C\ |\ {\rm Ext}_{\C}^1(\X, M)=0\}$. The `$\subseteq$' part is clear. Assume that an object $M\in\C$ satisfies ${\rm Ext}_{\C}^1(\X, M)=0$. Then
\begin{eqnarray*}
\Hom_\C(M,\X[1]) & \simeq & D\Hom_\C(\X[1], \mathbb{S}M)  \nonumber \\
                 & \simeq & D\Hom_\C(\tau\X, F\mathbb{S}M) \nonumber \\
                 & \simeq & D\Hom_\C(\X, M[1])=0.
\end{eqnarray*}
This implies that $[\T[1]](M,\X[1])=0$. Since $[\T[1]](\X, M[1])=0$ and $\X$ is $\T[1]$-cluster tilting, we know that $M\in\X$.
\medskip

Note that $\X$ is contravariantly finite. It follows from Remark \ref{rem:ctsubcat} that $\X$ is a cluster tilting subcategory of $\C$.   \qed
\medskip

From Definition \ref{a2}, any ghost cluster tilting subcategory is a contravariantly finite maxiaml ghost rigid subcategory. Now we prove the converse holds, which generalizes a result \cite[Theorem 2.6]{zz}.

\begin{thm}\label{a3}
Let $\C$ be a triangulated category with a cluster tilting subcategory $\T$. Then
ghost cluster tilting subcategories are precisely contravariantly finite maximal ghost rigid subcategories.
\end{thm}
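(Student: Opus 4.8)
The plan is to prove the nontrivial inclusion: if $\X$ is contravariantly finite and maximal $\T[1]$-rigid, then $\X$ is $\T[1]$-cluster tilting. The converse direction is already noted to follow directly from Definition \ref{a2}, so the whole content lies in showing that maximal ghost rigidity, together with contravariant finiteness, forces the equality
$$\X=\{M\in\C\ |\ [\T[1]](M, \X[1])=0\ \text{and}\ [\T[1]](\X, M[1])=0\}.$$
The inclusion `$\subseteq$' is immediate from $\T[1]$-rigidity of $\X$. For `$\supseteq$', I would fix an object $M$ in the right-hand side and aim to show $M\in\X$; by maximality of $\X$ it suffices to prove that $\X\vee\add M$ is again $\T[1]$-rigid, i.e. $[\T[1]]\big((\X\vee\add M),(\X\vee\add M)[1]\big)=0$.

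First I would unwind what $\X\vee\add M$ looks like: its indecomposable objects are (up to isomorphism) those of $\X$ together with the indecomposable summands of $M$. So checking ghost rigidity of $\X\vee\add M$ reduces to four families of vanishings: $[\T[1]](\X,\X[1])$ (known, since $\X$ is $\T[1]$-rigid), $[\T[1]](\X,M[1])$ and $[\T[1]](M,\X[1])$ (both hold by our choice of $M$), and finally the self-extension term $[\T[1]](M,M[1])$. Thus the entire argument collapses to the single point: \emph{prove $[\T[1]](M,M[1])=0$.} This is the step I expect to be the main obstacle, and it is exactly where contravariant finiteness of $\X$ must be used — maximal rigidity alone plus the two mixed vanishings does not obviously control the self-extensions of $M$.

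To handle $[\T[1]](M,M[1])=0$ I would mimic the proof of Proposition \ref{prop:ctsubcat}. Take a right $\X$-approximation $X_0\to M$ (which exists since $\X$ is contravariantly finite) and complete it to a triangle $X_1\to X_0\to M\to X_1[1]$; then try to show $X_1\in\X$ by verifying it lies in the defining set, using that $\X$ is $\T[1]$-rigid and that the connecting maps factor appropriately — here one needs that the approximation property converts a morphism $\T[1]\ni T\to X_1[1]$ or from $X_1$ into something that either lands in $\X$ or factors through the approximation, hence dies modulo $[\T]$. Once $X_1\in\X$, one has a triangle realizing $M$ as a cone of a map between objects of $\X$; then, given $g\in[\T[1]](M,M[1])$, I would lift/push $g$ along this triangle and use $[\T[1]](\X,M[1])=0$ and $[\T[1]](M,\X[1])=0$ (or $[\T[1]](\X,\X[1])=0$ after one more step) to conclude $g=0$. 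Concretely: apply $\Hom(-,M[1])$ and $\Hom(M,-)$ to the triangle, chase the long exact sequences together with the ideal $[\T[1]]$, and show any representative of a class in $[\T[1]](M,M[1])$ must factor through a morphism that is already known to be zero in the relevant quotient.

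Finally, with $\X\vee\add M$ shown to be $\T[1]$-rigid, maximality gives $M\in\X$, completing `$\supseteq$' and hence the equality; combined with the hypothesis that $\X$ is contravariantly finite, this is precisely the definition of a $\T[1]$-cluster tilting subcategory. The converse — that every ghost cluster tilting subcategory is contravariantly finite and maximal ghost rigid — I would dispatch in a sentence: contravariant finiteness is part of Definition \ref{a2}(iv), and if $\X\vee\add M$ were $\T[1]$-rigid then in particular $[\T[1]](M,\X[1])=0$ and $[\T[1]](\X,M[1])=0$, so $M$ belongs to the defining set of the $\T[1]$-cluster tilting subcategory $\X$, i.e. $M\in\X$, giving maximality. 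The only real work, and the only place a genuinely new idea (the approximation triangle) enters, is the vanishing $[\T[1]](M,M[1])=0$.
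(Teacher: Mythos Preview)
Your reduction of direction (2) to the single vanishing $[\T[1]](M,M[1])=0$ is correct, and the converse direction is fine. The gap is in your proposed method for proving that vanishing. You take a right $\X$-approximation $X_0\to M$, complete to a triangle $X_1\to X_0\to M\to X_1[1]$, and then want to show $X_1\in\X$. But this step is not justified: to conclude $X_1\in\X$ by maximality you would need $[\T[1]](X_1,X_1[1])=0$, which is exactly the same type of self-vanishing you are trying to prove for $M$; and to conclude it by ``verifying it lies in the defining set'' is circular, since the equality of $\X$ with that set is the statement under proof. Moreover, even granting $X_1\in\X$, your sketched chase does not finish: a morphism $g\in[\T[1]](M,M[1])$ does factor through the connecting map $h\colon M\to X_1[1]$, but the resulting factor $X_1[1]\to M[1]$ has no reason to lie in $[\T[1]]$, and $h$ itself need not factor through $\T[1]$, so none of the known vanishings apply.

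The paper's argument avoids approximating $M$ altogether. The key lemma (Lemma~\ref{lem:main}) applies only to approximations of objects of $\T$, and its proof uses $T_0\in\T$ essentially. From it one first deduces $\T\subseteq\X[-1]\ast\X$ (Corollary~\ref{cor:subset}); note that once such a triangle $T_0\to X_1\to X_2\to T_0[1]$ with $X_1,X_2\in\X$ exists, $\T[1]$-rigidity of $\X$ forces the first map to be a left $\X$-approximation, so covariant finiteness is not needed. Then one resolves $M$ by $\T$ (using that $\T$ is cluster tilting), pushes along the $\X$-resolution of $T_0$ via the octahedral axiom, applies Lemma~\ref{lem:main} again to place the resulting object $N$ in $\X$, and finally observes that the triangle $M\to N\to X_2\to M[1]$ splits because its connecting map lies in $[\T[1]](\X,M[1])=0$. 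Thus $M$ is a summand of $N\in\X$. The point you are missing is that the contravariant finiteness is used to approximate objects of $\T$, not $M$; this is what makes Lemma~\ref{lem:main} available and sidesteps the self-vanishing problem entirely.
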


To prove Theorem \ref{a3}, we shall need the following result.

\begin{lem}\label{lem:main}
\begin{itemize}
\item[\emph{(a)}] Let $\X$ be a maximal $\T[1]$-rigid subcategory in $\C$. For $T_0\in\T$, if there is a triangle: $$M[-1]\s{f}\longrightarrow T_0 \s{g}\longrightarrow X_0 \s{h}\longrightarrow M$$ such that $X_0\in\X$ and $g\colon T_0\longrightarrow X_0$ is a left $\X$-approximation of $T_0$, then $M\in \X$.
\item[\emph{(b)}] Let $\X$ be a maximal $\T[1]$-rigid subcategory in $\C$. For $T_0\in\T$, if there is a triangle: $$X_0[-1]\s{f}\longrightarrow T_0 \s{g}\longrightarrow M\s{h}\longrightarrow X_0$$ such that $X_0\in\X$ and $f\colon X_0[-1]\longrightarrow T_0$ is a right $\X[-1]$-approximation of $T_0$, then $M\in \X$.
    \end{itemize}
\end{lem}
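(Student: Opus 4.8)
The statement to prove is Lemma~\ref{lem:main}, which has two dual parts (a) and (b). I will focus on part (a); part (b) follows by a dual argument in the opposite category, so I would only indicate that at the end.

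\textbf{The plan for part (a).} The hypothesis gives a triangle $M[-1]\xrightarrow{f} T_0\xrightarrow{g} X_0\xrightarrow{h} M$ with $X_0\in\X$ and $g$ a left $\X$-approximation of $T_0$. Since $\X$ is maximal $\T[1]$-rigid, it suffices to show that $\X\vee\add M$ is again $\T[1]$-rigid; maximality then forces $M\in\X$. So the goal reduces to checking the four vanishing conditions: $[\T[1]](M,M[1])=0$, $[\T[1]](M,X[1])=0$, $[\T[1]](X,M[1])=0$ for all $X\in\X$ (the condition $[\T[1]](X,X'[1])=0$ is already known). First I would exploit the defining property of $g$ being a left $\X$-approximation: applying $\Hom_\C(-,X)$ for $X\in\X$ to the triangle, any map $T_0\to X$ factors through $g$, so the connecting map $\Hom_\C(M,X)\to\Hom_\C(T_0,X)$ (induced by $g[-1]\circ$, i.e.\ by $f$ appropriately shifted) is such that the long exact sequence gives $\Hom_\C(M[-1],X)\to\Hom_\C(T_0,X)$ surjective — equivalently, rotating, the relevant Ext-term vanishes modulo $[\T[1]]$. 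Concretely: apply $\Hom_\C(-,X[1])$ to the triangle to get
\[
\Hom_\C(T_0,X[1])\to\Hom_\C(M[-1],X[1])\to\Hom_\C(X_0[-1],X[1])\xrightarrow{\ \cdot g[-1]\ }\Hom_\C(T_0[-1],X[1]).
\]
A map $M\to X[1]$ corresponds (via rotation) to an element of $\Hom_\C(M[-1],X[1])$; I want to show it factors through $\T[1]$. Tracking through the long exact sequence and using that $g$ is a left $\X$-approximation of $T_0$ (so that the rightmost arrow is injective on the image of interest), one pushes such a map into a map factoring through $X_0\in\X$, and from there through the triangle down to something factoring through $T_0[1]\in\T[1]$. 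This is the standard "approximation triangle" manipulation and should be routine once set up carefully.

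\textbf{Key steps in order.} (1) Invoke maximality: reduce to proving $\X\vee\add M$ is $\T[1]$-rigid. (2) Prove $[\T[1]](X,M[1])=0$ for $X\in\X$: apply $\Hom_\C(X,-)$ to the triangle; since $g$ is a left $\X$-approximation, a map $X\to X_0$ lifts appropriately, and one shows any map $X\to M[1]$ that is nonzero mod $[\T[1]]$ would contradict this — so $[\T[1]](X,M[1])=0$. (3) Prove $[\T[1]](M,X[1])=0$ for $X\in\X$ (including $X=X_0$): apply $\Hom_\C(-,X[1])$ to the triangle; a map $M\to X[1]$ pulls back to a map $T_0\to X[1]$, but $T_0\in\T$ so this is automatically in $[\T[1]]$, hence the original map is too, modulo the image of $\Hom_\C(X_0[-1],X[1])$ which one controls via the approximation property. (4) Prove $[\T[1]](M,M[1])=0$: combine the triangle applied on both sides; use steps (2) and (3) together with the fact that $X_0\in\X$ and $T_0\in\T$ to squeeze any self-extension of $M$ through objects lying in $\X$ or in $\T[1]$. (5) Conclude $M\in\X$ by maximality, then note $\add M\subseteq\X$ gives the claim.

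\textbf{Main obstacle.} The delicate point is Step~(4), controlling $[\T[1]](M,M[1])$: one has to chase a potential self-extension of $M$ simultaneously through the triangle on the left (replacing the source $M$ by $T_0$ and $X_0$) and on the right (replacing the target $M[1]$ by $T_0[1]$ and $X_0[1]$), and verify that the "cross terms" — maps from $X_0$-type objects to $X_0$-type objects, or from $T$-type to $T$-type — all vanish modulo $[\T[1]]$, using $\X$'s rigidity, the approximation property of $g$, and the fact that $T_0[1]\in\T[1]$. Getting the bookkeeping of which connecting maps are injective/surjective right is where the care is needed; everything else is a formal diagram chase. Finally, for part~(b): dualize, working with the right $\X[-1]$-approximation $f\colon X_0[-1]\to T_0$ in place of the left $\X$-approximation, and run the same four-step argument with all arrows reversed; alternatively apply part~(a) in $\C^{\mathrm{op}}$ after observing that $\T^{\mathrm{op}}$ is cluster tilting in $\C^{\mathrm{op}}$ and $\X^{\mathrm{op}}$ is maximal $\T^{\mathrm{op}}[1]$-rigid there.
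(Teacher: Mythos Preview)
Your overall strategy is correct and matches the paper's proof exactly: reduce to showing $\X\vee\add M$ is $\T[1]$-rigid by verifying the three vanishing conditions $[\T[1]](M,\X[1])=0$, $[\T[1]](\X,M[1])=0$, $[\T[1]](M,M[1])=0$, then invoke maximality. Part~(b) is indeed dual and the paper likewise only proves~(a).

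However, several of your detailed claims about which ingredient is used where are backwards, and if you followed your sketch literally you would get stuck. For your Step~(2), showing $[\T[1]](\X,M[1])=0$: the left $\X$-approximation property of $g$ is \emph{not} what is used. Instead, given $y=y_2y_1\colon X[-1]\to T'\to M$ with $T'\in\T$, one uses rigidity of $\T$ (so $-f[1]\circ y_2\colon T'\to T_0[1]$ vanishes) to factor $y_2$ through $h\colon X_0\to M$, and then the $\T[1]$-rigidity of $\X$ kills the composite. For your Step~(3), showing $[\T[1]](M,\X[1])=0$: a map $M\to X[1]$ does \emph{not} pull back to a map $T_0\to X[1]$ along the triangle. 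The correct argument (shifted by $[-1]$) is: given $x=x_2x_1\colon M[-1]\to T'\to X$, first use $\T[1]$-rigidity of $\X$ to see $x\circ h[-1]=0$, hence $x$ factors through $f\colon M[-1]\to T_0$ as $x=af$; \emph{now} the approximation property of $g$ gives $a=bg$, whence $x=b(gf)=0$. So the approximation property is needed in Step~(3), not Step~(2). Finally, your ``main obstacle'' Step~(4) is much easier than you suggest: once Step~(3) is in hand, given $z=z_2z_1\colon M[-1]\to T'\to M$, rigidity of $\T$ again factors $z_2$ through $h$, and then the composite $M[-1]\to T'\to X_0$ lies in $[\T](M[-1],\X)$, which vanishes by Step~(3). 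No double diagram chase is required.
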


\proof We only prove part (a), the proof of part (b) is similar. For any $ x\in [\T](M[-1], \X),$ there are two morphisms $x_1\colon M[-1]\rightarrow T_1$ and $x_2\colon T_1\rightarrow X_1$ such that $x=x_2x_1$, where $T_1\in \T$ and $X_1\in\X$.

\begin{center}
$\xymatrix@!@C=0.6cm@R=0.4cm{
X_0[-1]\ar[r]^{h[-1]} & M[-1]\ar[r]^{f}\ar[d]^{x_1} & T_0\ar[r]^{g}\ar@{.>}[ddl]^{a} & X_0\ar[r]^{h}\ar@{.>}[ddll]^{b} & M\ar[r]^{-f[1]\quad} & T_0[1] \\
                       & T_1\ar[d]^{x_2}            &               &               & T_2\ar[u]^{y_2}\ar@{.>}[lu]^{c}    \\
                      & X_1                         &                &               & X_2[-1]\ar[u]^{y_1}
  }$

\medskip
\end{center}
Since $\X$ is $\T[1]$-rigid, we have $xh[-1]=x_2(x_1h[-1])=0$. Thus, there exists $a\colon T_0\to X_1$ such that $x=af$. Because $g$ is a left $\X$-approximation of $T_0$, we know there exists $b\colon X_0\to X_1$ such that $a=bg$. Therefore, $x=af=b(gf)=0$ and
\begin{equation}\label{equi:lem1}
\begin{array}{l}
[\T[1]](M, \X[1])=0.
\end{array}
\end{equation}
  For $ y\in [\T](\X[-1], M),$ there are two morphisms $y_1\colon X_2[-1]\rightarrow T_2$ and $y_2\colon T_2\rightarrow M$ such that $y=y_2y_1$, where $T_2\in \T$ and $X_2\in\X$. Since $f[1]y_2=0$, there exists $c\colon T_2\to X_0$ such that $y_2=hc$. Because $\X$ is $\T[1]$-rigid, we have $y=y_2y_1=h(cy_1)=0.$ Therefore,
\begin{equation}\label{equi:lem2}
\begin{array}{l}
[\T[1]](\X, M[1])=0.
\end{array}
\end{equation}
$\forall z\in [\T](M[-1], M),$ there are two morphisms $z_1\colon M[-1]\rightarrow T_3$ and $z_2\colon T_3\rightarrow M$ such that $z=z_2z_1$, where $T_3\in \T$. Since $f[1]z_2=0$, there exists $d\colon T_3\to X_0$ such that $z_2=hd$. By equality (\ref{equi:lem1}), we have $z=z_2z_1=h(dz_1)=0$. Thus,
\begin{equation}\label{equi:lem3}
\begin{array}{l}
[\T[1]](M, M[1])=0.
\end{array}
\end{equation}
\begin{center}
$\xymatrix@!@C=0.5cm@R=0.3cm{
 T_0\ar[r]^{g} & X_0\ar[r]^{h} & M\ar[r]^{-f[1]\quad} & T_0[1] \\
 &               & T_3\ar[u]^{z_2}\ar@{.>}[lu]^d    \\
&               & M[-1]\ar[u]^{z_1}
  }$
\end{center}
Using equalities (\ref{equi:lem1}), (\ref{equi:lem2}) and (\ref{equi:lem3}), we get $[\T[1]](\X\vee\add M, (\X\vee\add M)[1])=0$. Note that $\X$ is maximal $\T[1]$-rigid. Hence $M\in \X$.

 \qed
\medskip

We have the following direct consequences.

\begin{cor}\label{cor:subset}  Let $\T$ be a cluster tilting subcategory in a triangulated category $\C$ and $\X$ be a covariantly (or contravariantly) finite maximal $\T[1]$-rigid subcategory. Then $$\T\subseteq \X[-1]* \X.$$
\end{cor}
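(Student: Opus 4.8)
The plan is to deduce Corollary~\ref{cor:subset} directly from Lemma~\ref{lem:main}, using the two halves of that lemma in the two parenthetical cases. Fix $T_0\in\T$ and an object $M$ (respectively a subcategory $\X$) that is covariantly finite maximal $\T[1]$-rigid. First I would take a left $\X$-approximation $g\colon T_0\longrightarrow X_0$ of $T_0$, which exists by covariant finiteness, and complete it to a triangle in $\C$
\[
M[-1]\xrightarrow{\ f\ } T_0 \xrightarrow{\ g\ } X_0 \xrightarrow{\ h\ } M .
\]
Then Lemma~\ref{lem:main}(a) applies verbatim and gives $M\in\X$. Rewriting the same triangle as
\[
T_0 \xrightarrow{\ g\ } X_0 \xrightarrow{\ h\ } M \xrightarrow{\ -f[1]\ } T_0[1],
\]
i.e.\ as $X_0\longrightarrow M\longrightarrow T_0[1]$ with $X_0\in\X$ and (after the shift) $T_0[1]\in\T[1]=\X[?]$ — more precisely, reading it as a triangle $X_0\to M\to T_0[1]\to X_0[1]$ exhibits $M\in \X * \T[1]$, but the statement we want is $\T\subseteq \X[-1]*\X$, so I would instead rotate to the triangle
\[
X_0[-1]\xrightarrow{\ -h[-1]\ } M[-1]\xrightarrow{\ ?\ } \text{(nothing helpful)} ,
\]
and rather argue as follows: from $M[-1]\xrightarrow{f} T_0\xrightarrow{g} X_0\xrightarrow{h} M$ with $M\in\X$ by part (a), the shifted triangle $T_0\xrightarrow{g} X_0\xrightarrow{h} M\xrightarrow{-f[1]} T_0[1]$ rotates to $M[-1]\xrightarrow{f} T_0 \xrightarrow{g} X_0\xrightarrow{h} M$; reading this as the triangle $T_0\to X_0\to M\to T_0[1]$ and shifting once more, $T_0$ sits in a triangle $X_0[-1]\to (M[-1])\to T_0\to X_0$, wait—let me state it cleanly: the triangle $M[-1]\to T_0\to X_0\to M$ shows precisely that $T_0\in \X[-1]*\X$, since the first term $M[-1]$ lies in $\X[-1]$ (as $M\in\X$) and the third term $X_0$ lies in $\X$.

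So the core of the argument is exactly this observation: the approximation triangle built from a left $\X$-approximation of $T_0$ has the shape $\X[-1]\to T_0\to \X\to \X$, and Lemma~\ref{lem:main}(a) is what guarantees its leftmost term actually lands in $\X[-1]$ rather than merely being some cone. In the contravariantly finite case I would instead take a right $\X[-1]$-approximation $f\colon X_0[-1]\longrightarrow T_0$, complete it to a triangle $X_0[-1]\xrightarrow{f} T_0\xrightarrow{g} M\xrightarrow{h} X_0$, apply Lemma~\ref{lem:main}(b) to conclude $M\in\X$, and then this same triangle exhibits $T_0\in\X[-1]*\X$ directly, with first term $X_0[-1]\in\X[-1]$ and third term $M\in\X$. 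Since $T_0\in\T$ was arbitrary and both $\X[-1]$ and $\X$ are closed under the relevant operations, this gives $\T\subseteq\X[-1]*\X$.

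I do not expect a genuine obstacle here: all the real work has already been done inside Lemma~\ref{lem:main}, and the Corollary is essentially a repackaging of its triangles. The only point requiring a little care is bookkeeping with the octahedral/rotation conventions so that the term coming from the approximation sits in the correct position (\emph{first} slot of $\X[-1]*\X$), and making sure the covariantly-finite case and the contravariantly-finite case are handled by parts (a) and (b) respectively; a reader could otherwise worry that one needs \emph{both} finiteness hypotheses. Stating the two cases in parallel, each invoking exactly one part of the lemma, makes the dependence transparent.
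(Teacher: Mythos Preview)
Your proposal is correct and is exactly the argument the paper has in mind: the corollary is recorded as a ``direct consequence'' of Lemma~\ref{lem:main}, and your two cases (left $\X$-approximation plus part~(a) for covariantly finite $\X$; right $\X[-1]$-approximation plus part~(b) for contravariantly finite $\X$) are precisely how one unpacks that. The only cosmetic issue is the meandering in the middle of your write-up (the aborted rotations before ``let me state it cleanly''); the clean version---the triangle $M[-1]\to T_0\to X_0\to M$ with $M\in\X$ immediately exhibits $T_0\in\X[-1]*\X$---is all you need and should replace that passage.
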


\begin{cor} Let $\C$ be a triangulated category with  a cluster tilting subcategory $\T$. Then
ghost cluster tilting subcategories are weak ghost cluster tilting subcategories.
\end{cor}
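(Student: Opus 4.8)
The plan is to reduce the statement to Corollary \ref{cor:subset}. Let $\X$ be a ghost cluster tilting subcategory of $\C$, and let $\T$ be a cluster tilting subcategory witnessing this, so that $\X$ is contravariantly finite in $\C$ and
\[
\X=\{M\in\C\ |\ [\T[1]](M,\X[1])=0 \text{ and } [\T[1]](\X,M[1])=0\}.
\]
Comparing with Definition \ref{a2}(iii), the only assertion left to establish in order to conclude that $\X$ is weak $\T[1]$-cluster tilting is the inclusion $\T\subseteq\X[-1]*\X$; the defining equality is literally the same in both notions.

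First I would record that $\X$ is maximal $\T[1]$-rigid, as already noted in the paragraph preceding Theorem \ref{a3}: applying the displayed equality to $M,N\in\X$ gives $[\T[1]](M,N[1])=0$, so $\X$ is $\T[1]$-rigid; and if $M\in\C$ satisfies $[\T[1]](\X\vee\add M,(\X\vee\add M)[1])=0$, then in particular $[\T[1]](M,\X[1])=0$ and $[\T[1]](\X,M[1])=0$, whence $M\in\X$ by the equality. Thus $\X$ is a contravariantly finite maximal $\T[1]$-rigid subcategory, and Corollary \ref{cor:subset} (in its contravariantly finite form) yields $\T\subseteq\X[-1]*\X$, completing the proof.

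For completeness I would also recall how Corollary \ref{cor:subset} applies here, since it is immediate from Lemma \ref{lem:main}(b): given $T_0\in\T$, contravariant finiteness of $\X$ — equivalently of $\X[-1]$, as $[-1]$ is an autoequivalence of $\C$ — provides a right $\X[-1]$-approximation $f\colon X_0[-1]\to T_0$; completing $f$ to a triangle $X_0[-1]\xrightarrow{f}T_0\xrightarrow{g}M\xrightarrow{h}X_0$, Lemma \ref{lem:main}(b) gives $M\in\X$, so $T_0\in X_0[-1]*M\subseteq\X[-1]*\X$. There is essentially no obstacle: the result is a formal consequence of the definitions together with Corollary \ref{cor:subset}, and the only point requiring mild care is the identification of "contravariantly finite" for $\X$ with the same property for $\X[-1]$, which is precisely what makes Lemma \ref{lem:main}(b) available.
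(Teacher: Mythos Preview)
Your proof is correct and follows essentially the same route as the paper: the paper's proof cites Theorem~\ref{a3} (whose easy direction, ghost cluster tilting $\Rightarrow$ contravariantly finite maximal $\T[1]$-rigid, is exactly what you spell out) together with Corollary~\ref{cor:subset}, and you carry out precisely this argument, additionally unpacking how Corollary~\ref{cor:subset} follows from Lemma~\ref{lem:main}(b) in the contravariantly finite case.
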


\proof This follows from Theorem \ref{a3} and Corollary \ref{cor:subset}.  \qed
\medskip

The following example shows that the converse is not true.

\begin{exm}
Let $\C$ be a cluster category of type $\mathbb{A}_\infty$ in \emph{\cite{hj, Ng}}. The AR-quiver of $\C$ is as follows:
$$\xymatrix @-2.1pc @! {
    \rule{2ex}{0ex} \ar[dr] & & \vdots \ar[dr]  & & \vdots \ar[dr]& & \vdots \ar[dr]& & \vdots \ar[dr] & & \vdots \ar[dr] & & \vdots \ar[dr] & & \vdots \ar[dr] & & \rule{2ex}{0ex} \\
    & \bullet \ar[ur] \ar[dr] & & \clubsuit \ar[ur] \ar[dr] & &\circ \ar[ur] \ar[dr]  & & \circ \ar[ur] \ar[dr]& & \circ \ar[ur] \ar[dr] & & \circ \ar[ur] \ar[dr] & & \circ \ar[ur] \ar[dr] & & \clubsuit \ar[ur] \ar[dr] & \\
    \cdots \ar[ur]\ar[dr]& & \bullet \ar[ur] \ar[dr] & & \clubsuit \ar[ur] \ar[dr] &  & \circ \ar[ur] \ar[dr] && \circ \ar[ur] \ar[dr] & & \circ \ar[ur] \ar[dr] & & \circ \ar[ur] \ar[dr] & & \clubsuit \ar[ur] \ar[dr] & & \cdots \\
    & \circ \ar[ur] \ar[dr] & & \bullet \ar[ur] \ar[dr] & &\clubsuit \ar[ur] \ar[dr] & &\circ \ar[ur] \ar[dr] & & \circ \ar[ur] \ar[dr] & & \circ \ar[ur] \ar[dr] & & \clubsuit\ar[ur] \ar[dr] & & \bullet \ar[ur] \ar[dr] & \\
    \cdots \ar[ur]\ar[dr]& & \circ \ar[ur] \ar[dr] &  & \bullet \ar[ur] \ar[dr] && \clubsuit \ar[ur] \ar[dr]& & \circ \ar[ur] \ar[dr] & & \circ \ar[ur] \ar[dr] & & \clubsuit \ar[ur] \ar[dr] & & \bullet\ar[ur] \ar[dr] & & \cdots\\
    & \circ \ar[ur] & & \circ \ar[ur] & &\bullet \ar[ur] & & \clubsuit \ar[ur] & & \bullet \ar[ur] & & \clubsuit \ar[ur] & & \bullet \ar[ur] & & \circ \ar[ur] & \\ }$$
Set $\X$ to be the subcategory whose indecomposable objects are marked by bullets here,  $\T$ the subcategory whose indecomposable objects are marked by clubsuits here. It is easy to see that $\T$ is a cluster tilting subcategory of $\C$ and $\T\subseteq\X[-1]\ast\X$.
By Theorem 4.3 and Theorem 4.4 in \emph{\cite{hj}}, we know that $\X$ is a weak cluster tilting subcategory of $\C$
and $\X$ is not contravariantly finite in $\C$. That is to say, $\X$ is a weak ghost cluster tilting subcategory in the sense of Definition \ref{a2}, but it is not ghost cluster tilting ($=$cluster tilting).
\end{exm}

Now we are ready to prove Theorem \ref{a3}.
\medskip

(1)\ \  Assume that $\X$ is a $\T[1]$-cluster tilting subcategory. If there exists an object $M\in \C$ such that $$[\T[1]](\X\vee\add M, (\X\vee\add M)[1])=0,$$ then $$[\T[1]](\X, M[1])=0\  \text{ and }\  [\T[1]](M, \X[1])=0.$$ Since $\X$ is $\T[1]$-cluster tilting, we obtain $M\in\X.$
\medskip

(2)\ \ Assume that $\X$ is a contravariantly finite maximal $\T[1]$-rigid subcategory in $\C$.
Clearly, $$\X\subseteq \{M\in\C\ |\ [\T[1]](\X, M[1])=0= [\T[1]](M, \X[1])\}.$$
For any object $M\in \{M\in\C\ |\ [\T[1]](\X, M[1])=0= [\T[1]](M, \X[1])\}$, since $\T$ is cluster tilting,
there exists a triangle
$$\xymatrix{T_1\ar[r]^{f}&T_0\ar[r]^{g}&M\ar[r]^{h\;\;}&T_1[1]},$$
where $T_0,T_1\in\T$. By Corollary \ref{cor:subset}, there exists a triangle
$$\xymatrix{T_0\ar[r]^{u}&X_1\ar[r]^{v}&X_2\ar[r]^{w\;\;}&T_0[1]},$$
where $X_1,X_2\in\X$. Since $\X$ is $\T[1]$-rigid, we have that $u$ is a left $\X$-approximation of $T_0$.
By the octahedral axiom, we have a commutative diagram
$$\xymatrix{
T_1\ar[r]^{f}\ar@{=}[d]&T_0\ar[r]^{g}\ar[d]^{u}&M\ar[r]^{h}\ar[d]^{a}&T_1[1]\ar@{=}[d]\\
T_1\ar[r]^{x=uf}&X_1\ar[r]^{y}\ar[d]^{v}&N\ar[r]^{z}\ar[d]^{b}&T_1[1]\\
&X_2\ar@{=}[r]\ar[d]^{w}&X_2\ar[d]^{c}\\
&T_0[1]\ar[r]^{g[1]}&M[1]}$$
of triangles. Using similar arguments as in the proof of Proposition \ref{prop:ctsubcat},  we conclude that
$x$ is a left $\X$-approximation of $T_1$.
By Lemma \ref{lem:main}, we have $N\in\X$. Since $$c=g[1]w\in [\T[1]](\X,M[1])=0.$$
This shows that the triangle
$$\xymatrix{M\ar[r]^{a}&N\ar[r]^{b}&X_2\ar[r]^{c\;\;}&M[1].}$$
splits. It follows that $M$ is a direct summand of $N$ and then $M\in\X$. Hence $\X$ is $\T[1]$-cluster tilting.
 \qed
\medskip

As an application of the above theorem, we have the following.

\begin{cor}\label{a4}{\emph{\cite[Theorem 2.6]{zz}}}
Let $\C$ be a $2$-Calabi-Yau triangulated category with a cluster tilting subcategory $\cal T$. Then every
functorially finite maximal rigid subcategory is cluster-tilting.
\end{cor}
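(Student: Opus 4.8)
The plan is to derive Corollary \ref{a4} as a direct application of Theorem \ref{thm:F-stable} together with Theorem \ref{a3}, exploiting the fact that a $2$-Calabi-Yau triangulated category has $F = \id_{\C}$. First I would note that since $\C$ is $2$-Calabi-Yau, we have $\mathbb{S} \simeq [2]$, hence $\tau_{\C} \simeq \mathbb{S}[-1] \simeq [1]$ and $F = \tau_{\C}^{-1}[1] \simeq \id_{\C}$. In particular every subcategory of $\C$ is automatically $F$-stable, and by Lemma \ref{lem:T[1]-rigid} (the ``in particular'' clause) the notions of $\T[1]$-rigid and rigid coincide, so maximal $\T[1]$-rigid subcategories are exactly maximal rigid subcategories.

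Next I would take an arbitrary functorially finite maximal rigid subcategory $\X$ of $\C$. Being functorially finite, $\X$ is in particular contravariantly finite; being maximal rigid, it is maximal $\T[1]$-rigid by the preceding remark. Hence $\X$ is a contravariantly finite maximal ghost rigid subcategory, so by Theorem \ref{a3} it is $\T[1]$-cluster tilting, i.e.\ a ghost cluster tilting subcategory of $\C$. Finally, since $\X$ is $F$-stable (as everything is, in the $2$-CY case), Theorem \ref{thm:F-stable} applies and tells us that $\X$ is a cluster tilting subcategory of $\C$. This completes the argument.

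The only point requiring a little care is the compatibility of the finiteness hypotheses: Theorem \ref{a3} needs \emph{contravariant} finiteness, while the hypothesis gives the stronger \emph{functorial} finiteness, so there is no obstacle there; and Theorem \ref{thm:F-stable} needs $F$-stability, which is free in the $2$-CY setting. I expect no genuine difficulty: the statement is essentially a specialization of the two general theorems just proved, and the main (minor) thing to spell out is why $F = \id_{\C}$ forces every subcategory to be $F$-stable and why maximal rigid equals maximal $\T[1]$-rigid here. One could alternatively bypass Theorem \ref{thm:F-stable} and argue directly from Theorem \ref{a3} plus the fact that in a $2$-CY category a ghost cluster tilting subcategory coincides with a cluster tilting subcategory (which is itself the content of the ``in particular'' part of Theorem \ref{thm:F-stable}), but routing through Theorem \ref{thm:F-stable} is the cleanest presentation.
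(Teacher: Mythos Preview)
Your proof is correct and follows essentially the same path as the paper's: the paper's one-line proof cites only Lemma \ref{lem:T[1]-rigid} and Theorem \ref{a3}, using the former to identify (maximal) rigid with (maximal) $\T[1]$-rigid in the $2$-CY setting and the latter to conclude ghost cluster tilting, with the passage from ghost cluster tilting to cluster tilting again coming directly from Lemma \ref{lem:T[1]-rigid} (the two-sided vanishing condition in Definition \ref{a2}(iv) collapses to $\Ext^1$-vanishing). Your routing of this last step through Theorem \ref{thm:F-stable} is a harmless and equally valid packaging of the same observation, as you yourself note in your final remark.
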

\proof This follows from Lemma \ref{lem:T[1]-rigid} and Theorem \ref{a3}.
 \qed
\medskip

We give a characterization of weak ghost cluster tilting subcategories.

\begin{thm}
Let $\C$ be a triangulated category with a cluster tilting subcategory $\T$, and $\X$ a subcategory of $\C$. Then
$\X$ is a weak ghost cluster tilting subcategory if and only if $\X$ is a maximal ghost rigid subcategory such that $\T\subseteq\X[-1]\ast\X$.
\end{thm}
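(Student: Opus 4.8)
The plan is to prove both implications by unwinding the definitions and invoking the structural results already established in this section. First I would handle the easy direction: if $\X$ is a weak $\T[1]$-cluster tilting subcategory, then by definition $\T\subseteq\X[-1]\ast\X$ holds outright, so it only remains to check that $\X$ is maximal ghost rigid. Since $\X$ equals the set $\{M\in\C\ |\ [\T[1]](M,\X[1])=0 \text{ and } [\T[1]](\X,M[1])=0\}$, it is in particular $\T[1]$-rigid; and if $M\in\C$ satisfies $[\T[1]](\X\vee\add M,(\X\vee\add M)[1])=0$, then in particular $[\T[1]](M,\X[1])=0$ and $[\T[1]](\X,M[1])=0$, so $M$ lies in that set, i.e.\ $M\in\X$. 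Hence $\X$ is maximal $\T[1]$-rigid.

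For the converse, suppose $\X$ is a maximal ghost rigid subcategory with $\T\subseteq\X[-1]\ast\X$; I want to show $\X$ coincides with $\{M\in\C\ |\ [\T[1]](M,\X[1])=0 \text{ and } [\T[1]](\X,M[1])=0\}$. The inclusion ``$\subseteq$'' is immediate from $\T[1]$-rigidity of $\X$. For ``$\supseteq$'', take $M$ in the right-hand set. As in the proof of Theorem~\ref{a3}, start with a triangle $T_1\to T_0\to M\to T_1[1]$ with $T_0,T_1\in\T$ coming from $\C=\T\ast\T[1]$ (Proposition~\ref{FT}(a)), then use $\T\subseteq\X[-1]\ast\X$ to get a triangle $T_0\xrightarrow{u}X_1\to X_2\to T_0[1]$ with $X_1,X_2\in\X$; since $\X$ is $\T[1]$-rigid one checks $u$ is a left $\X$-approximation of $T_0$. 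Applying the octahedral axiom exactly as in the proof of Theorem~\ref{a3} produces $N\in\X_{\phantom{0}}$ (here invoking Lemma~\ref{lem:main}(a), which only needs $\X$ maximal $\T[1]$-rigid) sitting in a triangle $M\xrightarrow{a}N\xrightarrow{b}X_2\xrightarrow{c}M[1]$ with $c=g[1]w\in[\T[1]](\X,M[1])=0$, so the triangle splits and $M$ is a direct summand of $N\in\X$, whence $M\in\X$. This gives the required equality, so $\X$ is weak $\T[1]$-cluster tilting.

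The main point to be careful about is that the argument reproducing the proof of Theorem~\ref{a3} does \emph{not} need $\X$ to be contravariantly finite: in Theorem~\ref{a3} the contravariant finiteness was used (via Corollary~\ref{cor:subset}) only to \emph{deduce} $\T\subseteq\X[-1]\ast\X$, which we are now assuming as a hypothesis, and the verification that $x=uf$ is a left $\X$-approximation of $T_1$ together with the application of Lemma~\ref{lem:main} goes through verbatim using only the maximal $\T[1]$-rigidity of $\X$. So the real content is simply observing that the middle portion of the proof of Theorem~\ref{a3} is self-contained once $\T\subseteq\X[-1]\ast\X$ is granted. I expect no serious obstacle; the only thing requiring a line of care is re-checking that the left-$\X$-approximation property of $x$ uses nothing beyond what is available here, and that Lemma~\ref{lem:main}(a) applies since its hypotheses are precisely ``$\X$ maximal $\T[1]$-rigid'' plus the existence of the relevant triangle with $g$ a left $\X$-approximation.
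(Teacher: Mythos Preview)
Your proposal is correct and follows essentially the same approach as the paper: the paper's own proof is the single line ``Using similar arguments as in the proof of Theorem~\ref{a3},'' and you have simply unpacked exactly that argument, correctly isolating that contravariant finiteness in Theorem~\ref{a3} is used only (via Corollary~\ref{cor:subset}) to obtain $\T\subseteq\X[-1]\ast\X$, which here is given by hypothesis. Your verification of both directions, including the use of Lemma~\ref{lem:main}(a) and the octahedral argument, matches the paper's intent precisely.
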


\begin{proof}
 Using similar arguments as in the proof of Theorem \ref{a3}.
\end{proof}

We conclude this section with a picture illustrating the relationships between ghost cluster tilting subcategories and related subcategories:
$$
\xymatrix@C=2.7cm{
{\begin{tabular}{|c|}
\hline
\mbox{cluster tilting}\ar@<+2pt>[d]\\
\hline
\end{tabular}}\\
{\begin{tabular}{|c|}
\hline
\mbox{ghost cluster tilting}\ar@<+2pt>[u]^{F\textrm{-stable}}\ar@<+2pt>[d]\\
\hline
\end{tabular}}\ar@<+2pt>[r]&\ar@<+2pt>[l]^{\textrm{contravariantly finite}\quad}{\begin{tabular}{|c|}
\hline
\mbox{weak ghost cluster tilting}\\
\hline
\end{tabular}}\\
{\begin{tabular}{|c|}
\hline
\mbox{maximal ghost rigid}\ar@<+2pt>[u]^{\textrm{contravariantly finite}}\\
\hline
\end{tabular}}.
}$$

\section{Connection with $\tau$-tilting theory}\label{sect:3}
\setcounter{equation}{0}

Throughout this section, we assume that $\C$ is a $k$-linear, Hom-finite triangulated category with a cluster tilting subcategory
$\T$. It is well-known that the category $\mod\T$ of coherent $\T$-modules is abelian. By Theorem \ref{a5}, we know that the restricted Yoneda functor $\mathbb{H}\colon\C \rightarrow \mod\T$ induces an equivalence
$$  \C / [ \T[1] ] \xrightarrow{~\sim~}  \mod\, \T.$$
In this section, we investigate this relationship between $\C$ and $\mod\, \T$ via $\mathbb{H}$ more closely.

\subsection{On the relationship between ghost cluster tilting and support $\tau$-tilting }
In this subsection, we give a direct connection between ghost cluster tilting subcategories of $\C$ and support $\tau$-tilting pairs of $\mod\T$. We start with the following important observation.

\begin{lem}\label{a6}
Let $\C$ be a triangulated category with a cluster tilting subcategory $\T$ and $\X$ a subcategory of
$\C$.
For any object $X\in\X$, let
\begin{equation}\label{m1}
\begin{array}{l}
T_1\s{f}\longrightarrow T_0 \s{g}\longrightarrow X \s{h}\longrightarrow T_1[1]
\end{array}
\end{equation}
be a triangle in $\C$ with $T_0,T_1\in\T$. Then applying the functor $\mathbb{H}$ gives a projective presentation
\begin{equation}\label{m2}
\begin{array}{l}
P_1^{\mathbb{H}(X)}\xrightarrow{~~~\pi^{\mathbb{H}(X)}}P_0^{\mathbb{H}(X)}\xrightarrow{~~}\mathbb{H}(X)\xrightarrow{~~}0
\end{array}
\end{equation}
in $\mod\, \T$, and $\X$ is a $\T[1]$-rigid subcategory if and only if the class $\{\, \pi^{\mathbb{H}(X)} \,|\, X \in \X \,\}$ has Property (S).
\end{lem}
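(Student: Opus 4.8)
The plan is to analyze the triangle~\eqref{m1} under the homological functor $\mathbb{H}$ and translate the defining vanishing condition ``$[\T[1]](\X,\X[1])=0$'' into the surjectivity condition in Property~(S). First I would apply $\mathbb{H}$ to~\eqref{m1}. Since $\mathbb{H}$ is homological and sends $\T$ into $\proj(\mod\T)$ (by Theorem~\ref{a5}), the triangle yields an exact sequence $\mathbb{H}(T_1)\xrightarrow{f\circ}\mathbb{H}(T_0)\xrightarrow{g\circ}\mathbb{H}(X)\to 0$ with $\mathbb{H}(T_0),\mathbb{H}(T_1)\in\proj(\mod\T)$; setting $P_0^{\mathbb{H}(X)}:=\mathbb{H}(T_0)$, $P_1^{\mathbb{H}(X)}:=\mathbb{H}(T_1)$ and $\pi^{\mathbb{H}(X)}:=\mathbb{H}(f)$ gives the projective presentation~\eqref{m2}. (One should note that continuing the long exact sequence, $\ker\pi^{\mathbb{H}(X)}$ is a quotient of $\mathbb{H}(T_1[-1])$; this will be needed to compute the relevant $\Ext$ or cokernel terms.)

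The heart of the argument is the identification, for $X,X'\in\X$, of the cokernel of the map
\[
\Hom_{\mod\T}\!\big(\pi^{\mathbb{H}(X)},\mathbb{H}(X')\big)\colon \Hom_{\mod\T}\!\big(P_0^{\mathbb{H}(X)},\mathbb{H}(X')\big)\longrightarrow \Hom_{\mod\T}\!\big(P_1^{\mathbb{H}(X)},\mathbb{H}(X')\big)
\]
with a group built from morphisms in $\C$. Using the Yoneda-type isomorphism of Theorem~\ref{a5}(ii), $\Hom_{\mod\T}(\mathbb{H}(T_i),\mathbb{H}(X'))\cong \mathbb{H}(X')(T_i)=\Hom_{\C}(T_i,X')$, so the displayed map becomes precomposition with $f$, i.e. $\Hom_{\C}(T_0,X')\xrightarrow{-\circ f}\Hom_{\C}(T_1,X')$. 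Its cokernel is measured by the next term in the long exact sequence obtained by applying $\Hom_{\C}(-,X')$ to~\eqref{m1}, namely the image of $\Hom_{\C}(T_1,X')\to \Hom_{\C}(T_0[1],X')$ sitting inside $\Hom_{\C}(X[1]\,?,\ldots)$ — more precisely, a morphism $\varphi\colon T_1\to X'$ lifts through $f$ if and only if $\varphi$ kills the map $h[-1]\colon X[-1]\to T_1$ arising from~\eqref{m1}, equivalently $\varphi$ factors through $f$, equivalently (shifting) the induced class in $\Hom_{\C}(X,X'[1])$ that factors through $T_1[1]$, i.e. lies in $[\T[1]](X,X'[1])$, vanishes. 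Thus $\Hom_{\mod\T}(\pi^{\mathbb{H}(X)},\mathbb{H}(X'))$ is surjective for all $X,X'\in\X$ if and only if $[\T[1]](X,X'[1])=0$ for all $X,X'\in\X$, which is exactly the statement that $\{\pi^{\mathbb{H}(X)}\mid X\in\X\}$ has Property~(S) iff $\X$ is $\T[1]$-rigid.

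The step I expect to be the main obstacle is making the last identification precise: one must carefully track how an element of $[\T[1]](X,X'[1])$ — a morphism $X\to X'[1]$ factoring through some object of $\T[1]$ — corresponds, via the specific triangle~\eqref{m1} for $X$, to the obstruction to lifting a map $T_1\to X'$ along $f$, and check this correspondence does not depend on the auxiliary factorization. This is essentially a diagram chase using the long exact sequences from $\mathbb{H}(-)$ applied to~\eqref{m1} and from $\Hom_{\C}(-,X')$ applied to~\eqref{m1}, together with the fact that any morphism factoring through $\T[1]$ factors through the connecting map $h\colon X\to T_1[1]$ of a $\T$-resolution (since $\T$ is cluster tilting, $\Hom_{\C}(T_1[1],X'[1])\to\Hom_{\C}(X,X'[1])$ has image exactly $[\T[1]](X,X'[1])$). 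Once that dictionary is established, the equivalence of the two surjectivity/vanishing conditions is immediate, and the lemma follows.
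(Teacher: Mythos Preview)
Your proposal is correct and follows essentially the same route as the paper: use Theorem~\ref{a5}(ii) to identify $\Hom_{\mod\T}(\pi^{\mathbb{H}(X)},\mathbb{H}(X'))$ with $\Hom_{\C}(f,X')$, then show surjectivity of the latter for all $X,X'\in\X$ is equivalent to $[\T[1]](\X,\X[1])=0$ via the triangle~\eqref{m1} and the vanishing $\Hom_{\C}(T_0,T[1])=0$. One minor slip: in your parenthetical, $\ker\pi^{\mathbb{H}(X)}$ is a quotient of $\mathbb{H}(X[-1])$, not $\mathbb{H}(T_1[-1])$, but you never actually use this remark.
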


\proof It is easy to see that $\mathbb{H}$ applies to the triangle (\ref{m1}) gives the projective
presentation (\ref{m2}).\\[0.2cm]
By Theorem \ref{a5}(ii), the map
$\Hom_{\mod\,\T}\big(\pi^{\mathbb{H}(X)},\mathbb{H}(X')\big)$, where $X'\in\X$ is the same as
\begin{equation}\label{t3}
\begin{array}{l}
\Hom_{\C}( T_0, X' )\xrightarrow{\Hom_{\C}(f,\ X')} \Hom_{\C}(T_1,X').
\end{array}
\end{equation}
So the class $\{\, \pi^{\mathbb{H}(X)} \,|\, X\in \X \,\}$ has Property (S) if and
only if the morphism (\ref{t3}) is
surjective for all $X, X'\in\X$.
\vspace{1mm}

Assume that the class $\{\, \pi^{\mathbb{H}(X)} \,|\, X\in \X \,\}$ has Property (S). For any
$a\in[\T[1]](\X,\X[1])$, we know that there exist two morphisms
$a_1\colon X\to T[1]$ and $a_2\colon T[1]\to X'[1]$ such that $a=a_2a_1$, where $X,X'\in\X$ and  $T\in\T$.
Since $\Hom_{\C}(T_0,T[1])=0$, there exists a morphism $b\colon T_1[1]\to T[1]$ such that
$a_1=bh$.
$$\xymatrix@!@C=0.5cm@R=0.3cm{
 T_1\ar[r]^{f} & T_0\ar[r]^{g} & X\ar[r]^{h\quad} \ar[d]^{a_1}& T_1[1]\ar@{.>}[dl]^{b} \\
 &               & T[1]\ar[d]^{a_2} \\
&               & X'[1] }$$
Since $\Hom_{\C}(f,\ X')$ is surjective, there exists a morphism $c\colon T_0\to X'$ such that
$a_2[-1]\circ b[-1]=cf$ and then $a_2b=c[1]\circ f[1]$. It follows that
$a=a_2a_1=a_2bh=c[1]\circ(f[1]h)=0$. This shows that
$[\T[1]](\X,\X[1])=0$. Hence $\X$ is a $\T[1]$-rigid subcategory.
\medskip

Conversely, assume that $\X$ is a $\T[1]$-rigid subcategory. For any morphism
$x\colon T_1\to X'$, since $\X$ is $\T[1]$-rigid, we have $x\circ h[-1]=0$. So there exists a morphism
$y\colon T_0\to X'$ such that $x=yf$.
$$\xymatrix@!@C=0.6cm@R=0.3cm{
 X[-1]\ar[r]^{\; h[-1]}&T_1\ar[r]^{f}\ar[d]^{x}& T_0\ar@{.>}[dl]^{y}\ar[r]^{g} & X\ar[r]^{h\quad}& T_1[1]\\
 &X'&& }$$
This shows that $\Hom_{\C}(f, X')\colon \Hom_{\C}( T_0, X' )\to\Hom_{\C}(T_1,X')$ is surjective. By the above discussion, we know that the class $\{\, \pi^{\mathbb{H}(X)} \,|\, X\in \X \,\}$ has Property (S). \qed
\medskip

The following result plays an important role in this paper.
\begin{thm}\label{a7}
Let $\C$ be a triangulated category with a cluster tilting subcategory $\T$.
The functor $\mathbb{H}\colon \C\to \Mod\, \T$ induces a bijection $$\Phi\colon \X\longmapsto \big(\mathbb{H}(\X),\T\cap\X[-1]\big)$$
from the first of the following sets to the second:
\begin{itemize}
\item[\emph{(I)}] $\T[1]$-rigid subcategories of $\C$.

\item[\emph{(II)}] $\tau$-rigid pairs of $\mod\, \T$.
\end{itemize}
\end{thm}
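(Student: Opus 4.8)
The plan is to show that $\Phi$ is well-defined, injective, and surjective by explicitly constructing an inverse map and verifying compatibility on both sides of the Yoneda equivalence $\mathbb{H}\colon \C/[\T[1]]\xrightarrow{\sim}\mod\T$. First I would verify well-definedness: given a $\T[1]$-rigid subcategory $\X$, Lemma \ref{a6} already tells us that $\mathbb{H}(\X)$ admits projective presentations with Property (S), i.e.\ $\mathbb{H}(\X)$ is a $\tau$-rigid subcategory of $\mod\T$; it remains to check that $\E:=\T\cap\X[-1]$ is a subcategory of $\T$ with $\mathbb{H}(\X)(\E)=0$. The vanishing $M(E)=0$ for $M\in\mathbb{H}(\X)$, $E\in\E$ translates via Theorem \ref{a5}(ii) into $\Hom_\C(E,X)=0$ for $E\in\T\cap\X[-1]$ and $X\in\X$; since $E\in\X[-1]$ means $E[1]\in\X$, and any such morphism $E\to X$, when shifted, would give $E[1]\to X[1]$, I would use $\T[1]$-rigidity together with the fact that $E\in\T$ to argue this Hom-group factors appropriately and vanishes --- more precisely, a morphism $E\to X$ with $E\in\T$ lies in $[\T](E,X)$, hence its shift lies in $[\T[1]](\X,\X[1])=0$, and then an argument using that $E\to X$ is detected after applying $[1]$ (using that $E[-1][1]=E$) forces it to be zero. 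This last point needs care and I expect it to be one of the fiddly spots.

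Next I would construct the inverse. Given a $\tau$-rigid pair $(\M,\E)$ of $\mod\T$, set $\X:=\mathbb{H}^{-1}(\M)$, meaning the subcategory of all $C\in\C$ with $\mathbb{H}(C)\in\M$ --- but this is too big, since $\mathbb{H}$ kills $\T[1]$. The correct definition should be $\X := \{X\in\C \mid \mathbb{H}(X)\in\M\} \cap {}^{?}$, intersected with a condition that pins down the "$\T$-part": concretely I would take $\X$ to be $\add$ of the preimages of objects of $\M$ together with $\E[1]$, or better, define $\X$ via the triangles realizing projective presentations with Property (S) and show the resulting subcategory is $\T[1]$-rigid using the converse direction of Lemma \ref{a6}. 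The key compatibility to check is that $\mathbb{H}(\X)=\M$ and $\T\cap\X[-1]=\E$; the second equality is where the bookkeeping with $\E$ and $\Ker(\M)$-type conditions enters, and one uses $\mathbb{H}(\E[1])=0$ together with the equivalence $\T\xrightarrow{\sim}\proj(\mod\T)$ to recover $\E$ from the pair.

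For injectivity and surjectivity I would argue as follows. Surjectivity: given $(\M,\E)$ with $\M$ $\tau$-rigid, pick for each $M\in\M$ a projective presentation $P_1^M\xrightarrow{\pi^M}P_0^M\to M\to 0$ from the Property (S) class; lift $\pi^M$ through the equivalence $\T\xrightarrow{\sim}\proj(\mod\T)$ to a morphism $T_1^M\to T_0^M$ in $\T$, complete to a triangle $T_1^M\to T_0^M\to X_M\to T_1^M[1]$, and let $\X$ be the subcategory generated by all $X_M$ together with $\E[1]$; then $\mathbb{H}(X_M)=M$ by exactness of $\mathbb{H}$ applied to the triangle, and Lemma \ref{a6} (converse direction, applied to the Property (S) hypothesis) shows $\X$ is $\T[1]$-rigid, while the addition of $\E[1]$ is forced to keep $\T\cap\X[-1]=\E$ --- here one must check $\E[1]$ does not create new extensions, i.e.\ $[\T[1]](\E[1],\X[1])=0$ and $[\T[1]](\X,\E[2])=0$, which follows because $\mathbb{H}$ sends these to maps into/out of zero objects combined with the $\M(\E)=0$ condition. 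Injectivity: if $\Phi(\X)=\Phi(\X')$ then $\mathbb{H}(\X)=\mathbb{H}(\X')$ and $\T\cap\X[-1]=\T\cap\X'[-1]$; since $\mathbb{H}$ is an equivalence $\C/[\T[1]]\xrightarrow{\sim}\mod\T$, the first equality says $\X$ and $\X'$ have the same image in the quotient, so they differ only by summands killed by $\mathbb{H}$, i.e.\ by objects of $\add\T[1]$; the second equality pins down exactly which summands from $\add\T[1]$ appear (namely $\E[1]$), forcing $\X=\X'$.

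The main obstacle, I expect, is not any single deep step but rather the careful tracking of the "$\T$-part" $\E=\T\cap\X[-1]$ through the equivalence: one must repeatedly use Theorem \ref{a5}(ii) to pass between $\Hom_\C(E,-)$ and evaluation $N(E)$, use that $\mathbb{H}$ annihilates $\T[1]$ and is otherwise faithful on the quotient, and make sure that adding $\E[1]$ to a $\T[1]$-rigid subcategory keeps it $\T[1]$-rigid --- the relevant vanishing $[\T[1]](\X\vee\E[1],(\X\vee\E[1])[1])=0$ must be verified in all four "mixed" combinations. Organizing these verifications cleanly, rather than any conceptual difficulty, is the crux.
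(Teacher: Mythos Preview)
Your proposal is correct and follows essentially the same route as the paper's proof: well-definedness via Lemma~\ref{a6} together with the vanishing $\Hom_\C(E,X)=0$ for $E\in\T\cap\X[-1]$ (which is immediate, since such a map is itself in $[\T](\X[-1],\X)$ and hence its shift lies in $[\T[1]](\X,\X[1])=0$ --- so the spot you flagged as fiddly is in fact one line), surjectivity by lifting the Property~(S) presentations to triangles and setting $\X=\X_1\vee\E[1]$, and injectivity by splitting off the $\T[1]$-part and invoking Krull--Schmidt through the equivalence $\C/[\T[1]]\simeq\mod\T$. The paper organises the argument as well-defined\,/\,surjective\,/\,injective rather than constructing an explicit inverse map, and verifies only the three genuinely new ``mixed'' $\T[1]$-rigidity conditions (the $\X_1$--$\X_1$ case being already handled by Lemma~\ref{a6}), but the content is identical to what you outline.
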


\proof  \textbf{Step 1:} The map $\Phi$ has values in $\tau$-rigid pairs of $\mod\, \T$.
\medskip

Assume that  $\X$ is a $\T[1]$-rigid subcategory of $\C$.
Since $\T$ is a cluster tilting subcategory, for any $X\in\X$, there exists a triangle in $\C$
$$T_1\s{f}\longrightarrow T_0 \s{g}\longrightarrow X \s{h}\longrightarrow T_1[1],$$
where $T_0,T_1\in\T$.  By Lemma \ref{a6}, we have that
$\mathbb{H}$ sends the set of these triangles to a set of  projective presentations
(\ref{m2}) which has Property (S).
It remains to show that for any $X\in\X$ and $X'\in\T\cap\X[-1]$, we have $\mathbb{H}(X)(X')=0$. Indeed,
since $\X$ is a $\T[1]$-rigid subcategory, we have $\mathbb{H}(X)(X')=\Hom_{\C}(X',X)=0$.
$$\xymatrix{
X'\in\X[-1]\ar[r]\ar@{=}[dr]&X\in\X\\
 &X'\in\T\ar[u] }$$
This shows that $\big(\mathbb{H}(\X),\T\cap\X[-1]\big)$ is a $\tau$-rigid pair of $\mod\, \T$.
\medskip

\textbf{Step 2:} The map $\Phi$ is surjective.
\medskip

Let $(\M,\E)$ be a $\tau$-rigid pair of $\mod\,\T$.  For
each $M\in\M$, take a projective
presentation
\begin{equation}\label{t4}
\begin{array}{l}
P_1\xrightarrow{\pi^M} P_0
\xrightarrow{} M
\xrightarrow{} 0
\end{array}
\end{equation}
such that the class $\{\, \pi^M \,|\, M \in \M \,\}$ has Property
(S). By Theorem \ref{a5}(ii), there is a unique morphism
$f_M\colon T_1 \rightarrow T_0$ in $\T$ such that $\mathbb{H}( f_M ) =
\pi^M$. Moreover, $\mathbb{H}( \mathrm{cone}(f_M) ) \cong M$. Since
(\ref{t4}) has Property (S), it
follows from Lemma \ref{a6} that the category
\[
  \X_1 := \{\, \mathrm{cone}(f_M) \,|\, M \in \M \,\}
\]
is a $\T[1]$-rigid subcategory.
\medskip

 Let $\X:=\X_1\vee\E[1]$.
Now we show that $\X$ is a $T[1]$-rigid subcategory of $\C$.
Let $E\in\E\subseteq\T$. Since $\T$ is cluster-tilting, we have $$[\T[1]](\mathrm{cone}(f_M)\oplus E[1],E[2])=0.$$
Applying the functor $\Hom_{\C}(E,-)$ to the triangle
$T_1\xrightarrow{f_M}T_0\to \mathrm{cone}(f_M)\to T_1[1]$,
we have an exact sequence
\[\Hom_{\C}(E,T_1)\xrightarrow{f_M\circ}\Hom_{\C}(E,T_0)\to\Hom_{\C}(E,\mathrm{cone}(f_M))\to0,\]
which is isomorphic to $$P_1(E)\xrightarrow{\pi^M}P_0(E)\to M(E)\to 0.$$
The condition $\M(\E)=0$ implies that $\Hom_{\C}(E,\mathrm{cone}(f_M))=0$
and then $$[\T[1]](E[1],\mathrm{cone}(f_M)[1])=0.$$
Thus the assertion follows.
\medskip

Now we show that $\Phi( \X ) =( \M , \E )$.
\medskip

It is straightforward to check that $\T\cap\X_1[-1]=0$.
For any object $X\in\T\cap\X[-1]$, we can write $X=X_1[-1]\oplus E\in\T$, where $X_1\in\X_1$ and $E\in\E$.
Since $X_1[-1]\in\T\cap\X_1[-1]=0$, we have $X=E\in\E$. Thus we have $\T\cap\X[-1]\subseteq\E$.
By the definition of $\tau$-rigid pair, we have $\E\subseteq\T$.
Note that $\E\subseteq\X_1[-1]\vee\E=\X[-1]$, it follows that $\E\subseteq\T\cap\X[-1]$.
Hence $\T\cap\X[-1]=\E$.
It remains to show that $\mathbb{H}(\X)=\M$. Indeed, since $\E\subseteq\T$, we have
$$\mathbb{H}(\X)=\Hom_{\C}(\T,\X)=\Hom_{\C}(\T,\X_1)=\mathbb{H}(\X_1)=\M.$$

\textbf{Step 3:} The map $\Phi$ is injective.
\medskip

Let $\X$ and $\X'$ be two $\T[1]$-rigid subcategories of $\C$ such that $\Phi(\X)=\Phi(\X')$. Let
$\X_1$ and $\X'_1$ be respectively the full subcategories of $\X$
and $\X'$ consisting of objects without direct summands in
$\T[1]$. Then $\X=\X_1\vee (\X\cap\T[1])$ and
$\X'=\X'_1\vee (\X'\cap\T[1])$. Since $\Phi(\X)=\Phi(\X')$,
it follows that $\mathbb{H}(\X_1)=\mathbb{H}(\X'_1)$ and
$\X\cap\T[1]=\X'\cap\T[1]$.
\medskip

For any object $X_1\in\X_1$, there exists $X'_1\in\X'_1$ such that $\mathbb{H}(X_1)=\mathbb{H}(X'_1)$. By Theorem \ref{a5}(iii), there exists an isomorphism
$X_1\oplus Y[1]\simeq X'_1\oplus Z[1]$ for some $Y,Z\in\T$. Since $\C$ is Krull-Remak-Schmidt, we have
$X_1\simeq X'_1$. This implies that $\X_1\subseteq \X'_1$. Similarly, we obtain $\X'_1\subseteq \X_1$ and then $\X_1\simeq\X'_1$.
 Therefore $\X=\X'$. This shows that $\Phi$ is injective. \qed
\medskip

Our main result in this subsection is the following.
\begin{thm}\label{a9}
Let $\C$ be a triangulated category with a cluster tilting subcategory $\T$.
The functor $\mathbb{H}\colon \C\to \Mod\, \T$ induces a bijection $$\Phi\colon \X\longmapsto \big(\mathbb{H}(\X),\T\cap\X[-1]\big)$$
from the first of the following sets to the second:
\begin{itemize}
\item[\emph{(I)}] Weak $\T[1]$-cluster tilting subcategories of $\C$.

\item[\emph{(II)}] Support $\tau$-tilting pairs of $\mod\, \T$.
\end{itemize}
\end{thm}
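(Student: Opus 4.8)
The plan is to build on Theorem \ref{a7}, which already gives a bijection $\Phi$ between $\T[1]$-rigid subcategories of $\C$ and $\tau$-rigid pairs of $\mod\,\T$. Since weak $\T[1]$-cluster tilting subcategories are in particular $\T[1]$-rigid, and support $\tau$-tilting pairs are in particular $\tau$-rigid pairs, it suffices to show that $\Phi$ and $\Phi^{-1}$ restrict correctly: namely, $\X$ is weak $\T[1]$-cluster tilting if and only if $\big(\mathbb{H}(\X),\T\cap\X[-1]\big)$ is support $\tau$-tilting. First I would fix a $\T[1]$-rigid $\X$ and write $\X=\X_1\vee\E[1]$ as in the proof of Theorem \ref{a7}, where $\X_1$ has no summands in $\T[1]$ and $\E=\T\cap\X[-1]$; set $\M=\mathbb{H}(\X)=\mathbb{H}(\X_1)$. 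The two conditions defining ``support $\tau$-tilting'' are (a) $\E=\Ker(\M)$ and (b) for each $T\in\T$ there is an exact sequence $\T(-,T)\xrightarrow{f}M^0\to M^1\to 0$ with $M^0,M^1\in\M$ and $f$ a left $\M$-approximation. These must be matched against the defining conditions of weak $\T[1]$-cluster tilting: (i) $\T\subseteq\X[-1]\ast\X$ and (ii) $\X=\{M\in\C\mid[\T[1]](M,\X[1])=0=[\T[1]](\X,M[1])\}$.

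The core of the argument is a dictionary between triangles in $\C$ and exact sequences in $\mod\,\T$ under $\mathbb{H}$. For condition (b)$\Leftrightarrow$(i): given $T\in\T$, apply $\mathbb{H}$ to the triangle $T_0\xrightarrow{u}X^0\xrightarrow{v}X^1\xrightarrow{w}T_0[1]$ realising $T_0\in\X[-1]\ast\X$ with $u$ a left $\X$-approximation (available once $\X$ is maximal $\T[1]$-rigid by Corollary \ref{cor:subset}, but for the weak case we take it as part of the hypothesis after noting $\T\subseteq\X[-1]\ast\X$ lets us choose such a triangle — with $X^0,X^1\in\X$ and, after stripping $\T[1]$-summands, $\mathbb{H}(u)$ a left $\M$-approximation of $\T(-,T_0)=\mathbb{H}(T_0)$); this yields exactly the sequence in (b). Conversely, lifting a sequence as in (b) along the equivalence $\C/[\T[1]]\xrightarrow{\sim}\mod\,\T$ of Theorem \ref{a5}(iii) and completing to a triangle gives $T_0\in\X[-1]\ast\X$, using Theorem \ref{a5}(ii) to identify $\T(-,T_0)$ with $\mathbb{H}(T_0)$ and to see the lift of $f$ is a genuine morphism from $T_0$. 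One must be slightly careful that the $\T[1]$-part is handled: the summand $\E$ contributes nothing to $\mathbb{H}$, and on the $\E$-side the condition $\E\subseteq\Ker(\M)$ is what allows one to replace ``left $\X$-approximation'' by ``left $\X_1$-approximation'' harmlessly. For condition (a) $\E=\Ker(\M)$: the inclusion $\E\subseteq\Ker(\M)$ holds because $\Phi$ lands in $\tau$-rigid pairs (already in Theorem \ref{a7}); the reverse inclusion $\Ker(\M)\subseteq\E$ is where maximality of $\X$ (i.e. the ``$\X=\{\dots\}$'' equality in the weak cluster tilting definition) is used — if $E\in\T$ with $\mathbb{H}(\X)(E)=\Hom_\C(E,\X)=0$, one shows via Lemma \ref{lem:main}-type arguments that $E[1]$ can be adjoined to $\X$ without destroying $\T[1]$-rigidity, hence $E[1]\in\X$, i.e. $E\in\X[-1]\cap\T=\E$.

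Then I would verify surjectivity onto support $\tau$-tilting pairs: starting from such a pair $(\M,\E)$, Theorem \ref{a7} produces a $\T[1]$-rigid $\X=\X_1\vee\E[1]$ with $\Phi(\X)=(\M,\E)$; I must check $\X$ is actually \emph{weak} $\T[1]$-cluster tilting. Condition (i) $\T\subseteq\X[-1]\ast\X$ follows by applying $\mathbb{H}^{-1}$ (the equivalence of Theorem \ref{a5}(iii)) to the sequences from Definition \ref{a1}(iv) and completing to triangles, as sketched above. Condition (ii), the maximality equality, is the heart: given $M\in\C$ with $[\T[1]](M,\X[1])=0=[\T[1]](\X,M[1])$, take a triangle $T_1\xrightarrow{f}T_0\xrightarrow{g}M\xrightarrow{h}T_1[1]$, then use $\T\subseteq\X[-1]\ast\X$ to get $T_0\xrightarrow{u}X^0\xrightarrow{v}X^1\xrightarrow{w}T_0[1]$, apply the octahedral axiom exactly as in the proof of Theorem \ref{a3} to produce $T_1\xrightarrow{x=uf}X^0\xrightarrow{y}N\xrightarrow{z}T_1[1]$ and a split triangle $M\xrightarrow{a}N\xrightarrow{b}X^1\xrightarrow{c}M[1]$; the new ingredient is that one needs $N\in\X$, which follows from Lemma \ref{lem:main} once $x$ is shown to be a left $\X$-approximation of $T_1$ — and $x$ being a left $\X$-approximation uses precisely that $\mathbb{H}(u)$ can be taken to be a left $\M$-approximation, which is the content of the support $\tau$-tilting hypothesis on $(\M,\E)$ transported through $\mathbb{H}$. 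The main obstacle, as in Theorem \ref{a3}, is organising this octahedral diagram chase and pinning down the approximation property of $x$; injectivity of the restricted $\Phi$ is immediate from injectivity in Theorem \ref{a7}. I expect the verification that ``left $\M$-approximation in $\mod\,\T$'' lifts to ``left $\X$-approximation in $\C$'' (and back), keeping track of $\T[1]$-summands via $\Ker(\M)=\E$, to be the one place needing genuine care rather than routine bookkeeping.
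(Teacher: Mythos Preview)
Your overall architecture matches the paper's: reduce to Theorem~\ref{a7}, then show $\Phi$ and $\Phi^{-1}$ restrict. The forward direction (weak $\T[1]$-cluster tilting $\Rightarrow$ support $\tau$-tilting) is essentially as in the paper and is fine. The surjectivity argument, however, has two genuine gaps.

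First, your appeal to Lemma~\ref{lem:main} to conclude $N\in\X$ is circular. Lemma~\ref{lem:main} has as a hypothesis that $\X$ is \emph{maximal} $\T[1]$-rigid; but the $\X$ you obtain from Theorem~\ref{a7} is, at that stage, only $\T[1]$-rigid, and maximality is exactly the conclusion you are trying to establish. The paper does \emph{not} invoke Lemma~\ref{lem:main} here. Instead, having the triangle $T_5\xrightarrow{x}X_6\to N\to T_5[1]$ with $x$ a left $\X$-approximation, it compares this with a second triangle $T_5\xrightarrow{u'}X_5\to Y_{T_5}\to T_5[1]$ (coming from the support $\tau$-tilting data) via a homotopy cartesian square (Neeman, Lemma~1.4.3), obtaining a triangle $X_6\to N\oplus X_5\to Y_{T_5}\to X_6[1]$ whose connecting map lies in $[\T[1]](\widetilde{\X},\widetilde{\X}[1])=0$; this forces $N\oplus X_5\simeq X_6\oplus Y_{T_5}$, hence $N\in\widetilde{\X}$.

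Second, your claim that $\T\subseteq\X[-1]\ast\X$ ``follows by applying $\mathbb{H}^{-1}$ to the sequences from Definition~\ref{a1}(iv) and completing to triangles'' is incomplete. Lifting $\T(-,T)\xrightarrow{\alpha}\mathbb{H}(X_3)$ to $\beta\colon T\to X_3$ and completing gives $T\xrightarrow{\beta}X_3\to Y_T\to T[1]$ with $\mathbb{H}(Y_T)\simeq M^1\in\M$; but $\mathbb{H}(Y_T)\in\M$ does \emph{not} imply $Y_T\in\X$ --- it only says $Y_T$ and some $X_4\in\X$ agree modulo $\T[1]$-summands, and those extra summands need not lie in $\E[1]$. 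The paper handles this by enlarging $\X$ to $\widetilde{\X}:=\X\vee\add\{Y_T\mid T\in\T\}$ and then proving directly (equations~(\ref{equi:mainthm1})--(\ref{equi:mainthm3})) that $\widetilde{\X}$ is $\T[1]$-rigid with $\Phi(\widetilde{\X})=(\M,\E)$; a posteriori, injectivity of $\Phi$ from Theorem~\ref{a7} gives $\widetilde{\X}=\X$, so your $\X$ was correct all along --- but you cannot avoid the work of showing $\widetilde{\X}$ is $\T[1]$-rigid. This same enlargement is also what supplies the second triangle needed for the homotopy cartesian argument above.
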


\proof  \textbf{Step 1:} The map $\Phi$ has values in support $\tau$-tilting pairs of $\mod\, \T$.
\medskip

Assume that $\X$ is a weak $\T[1]$-cluster tilting subcategory of $\C$. By Theorem \ref{a7}, we know that $\Phi(\X)$ is a $\tau$-rigid pair of $\mod\, \T$. Therefore $\T\cap\X[-1]\subseteq \Ker\  \mathbb{H}(\X).$
\medskip

Let $T\in\T$ be an object of $\Ker \ \mathbb{H}(\X)$, i.e. $\Hom_{\C}( T,X )=0$ for each
$X\in\X$. This implies that $[\T[1]](X \oplus T[1], \X[1])=0$. Note that $[\T[1]]\big(\X, (X\oplus T[1])[1]\big)=0$. Since $\X$ is a $\T[1]$-cluster tilting subcategory, we have $X\oplus T[1]\in\X$ and then $T\subseteq\X[-1]$. Therefore $T\in \T\cap \X[-1]$.
This shows that $\Ker\  \mathbb{H}(\X) \subseteq \T\cap\X[-1].$ Hence
$$\Ker\ \mathbb{H}(\X) = \T\cap \X[-1].$$

By the definition of weak $\T[1]$-cluster tilting subcategories, for any $T\in\T$, there exists a triangle
$$T\xrightarrow{f}X_1\xrightarrow{g}X_2\xrightarrow{h}T[1],$$
where $X_1,X_2\in\X$.
Applying the functor $\mathbb{H}$ to the above triangle, we obtain an exact sequence
$$
  \mathbb{H}(T)
  \stackrel{\mathbb{H}(f)}{\longrightarrow} \mathbb{H}(X_1)
  \longrightarrow \mathbb{H}(X_2)
  \rightarrow 0.
$$
For any morphism $a\colon T\to X$, where $X \in \X$, since $\X$ is $\T[1]$-rigid, we have $ah[-1]=0$.
So there exists a morphism $b\colon X_1\to X$ such that $a=bf$. This shows that $\Hom_{\C}(f, X)$
is a surjective. Thus there exists the following commutative diagram.
\[
  \xymatrix{
    \Hom_{\C}(X_1,X)\ar[r]^{\Hom_{\C}(f,\ X)}\ar[d] & \Hom_{\C}(T,X)\ar[r] \ar[d] &0\\
    \Hom_{\mod\,\T}(\mathbb{H}(X_1),\mathbb{H}(X))\ar[r]^{\circ\mathbb{H}(f)} & \Hom_{\mod\,\T}(\mathbb{H}(T),\mathbb{H}(X))
           }
\]
Using Theorem \ref{a5}(ii), the right vertical map is an isomorphism. It follows that $\circ\mathbb{H}(f)$ is surjective, that is, $\mathbb{H}(f)$ is a left
$\mathbb{H}(\X)$-approximation.  Altogether, we have shown that $\Phi( \X )$
is a support $\tau$-tilting pair of $\mod\, \T$.
\medskip

\textbf{Step 2:} The map $\Phi$ is surjective.
\medskip

Let $(\M,\E)$ be a support $\tau$-tilting pair of $\mod\,\T$ and let $\X$ be the preimage of $(\M,\E)$ under $\Phi$ constructed in Theorem \ref{a7}. Since $\mathbb{H}(\X)=\M$ is a support $\tau$-tilting subcategory, for each $T\in\T$, there is an exact sequence
$$
  \mathbb{H}(T)
  \stackrel{\alpha}{\rightarrow} \mathbb{H}(X_3)
  \rightarrow \mathbb{H}(X_4)
  \rightarrow 0.
$$
such that $X_3,X_4\in\X$ and $\alpha$ is a left
$\mathbb{H}(\X)$-approximation.  By Yoneda's lemma, there exists a unique morphism
$\beta\colon T\rightarrow X_3$ such that $\mathbb{H}( \beta ) = \alpha$.  Complete it to a triangle
\begin{equation}\label{t5}
\begin{array}{l}
T\s{\beta}\longrightarrow X_3 \s{\gamma}\longrightarrow Y_T \s{\delta}\longrightarrow T[1].
\end{array}
\end{equation}
Let $\widetilde{\X}:= \X\vee\add\{\, Y_T
\,|\, T \in \T \,\}$ be the additive closure of $\X$ and $\{\, Y_T
\,|\, T \in \T \,\}$. We claim that $\widetilde{\X}$ is a weak $\T[1]$-cluster tilting
subcategory of $\C$ such that $\Phi(
\widetilde{\X} ) = ( \M , \E )$.
\medskip

It is clear that $\T\subseteq\X[-1]\ast\X$. \   It remains to show that
$$\widetilde{\X}=\{M\in\C\ |\ [\T[1]](M, \widetilde{\X}[1])=0=[\T[1]](\widetilde{\X}, M[1])\}.$$

Applying the functor $\mathbb{H}$ to the triangle (\ref{t5}), we see that $\mathbb{H}(Y_T)$ and $\mathbb{H}(X_4)$
are isomorphic in $\mod\,\T$. For any object $X\in\X$, consider the following
commutative diagram.
\[
  \xymatrix{
    \Hom_{\C}(X_3,X) \ar[r]^{\Hom_{\C}(\beta,\ X)} \ar[d]_{\mathbb{H}(-)}
      & \Hom_{\C}(T,X)\ar[d]^{\simeq}\\
    \Hom_{\mod\,\T}( \mathbb{H}(X_3) , \mathbb{H}(X) ) \ar[r]^{\circ\alpha}
      & \Hom_{\mod\,\T}(\mathbb{H}(T),\mathbb{H}(X))}
\]
By Theorem \ref{a5}, the map $\mathbb{H}(-)$ is surjective and the right vertical map is an isomorphism.
Because $\alpha$ is a left
$\mathbb{H}(\X)$-approximatiom, $\circ\alpha$ is also surjective. Therefore $\Hom_{\C}(\beta, X)$ is surjective too.
\medskip

For any morphism $a\in[\T[1]](Y_T,X[1])$, since $\X$ is $\T[1]$-rigid, we have
$a\gamma=0$. So there exists a morphism $b\colon T[1]\to X[1]$ such that $a=b\delta$.
$$\xymatrix@!@C=0.5cm@R=0.3cm{
 T\ar[r]^{\beta} & X_3\ar[r]^{\gamma} & Y_T\ar[r]^{\delta\;\;} \ar[d]^{a}& T[1]\ar@{.>}[dl]^{b} \\
 && X[1]&}$$
Since $\Hom_{\C}(\beta,\ X)$ is surjective, there exists a morphism $c\colon X_3\to X$ such that
$c\beta=b[-1]$ and then $b=c[1]\circ \beta[1]$. It follows that
$a=b\delta=c[1]\circ(\beta[1]\delta)=0$.
This shows that
\begin{equation}\label{equi:mainthm1}
\begin{array}{l}
[\T[1]](Y_T,\X[1])=0.
\end{array}
\end{equation}

For any morphism $x\in[\T](X[-1],Y_T)$, we know that there exist two morphisms $x_1\colon X[-1]\to T_1$ and
$x_2\colon T_1\to Y_T$ such that $x=x_2x_1$, where $T_1\in\T$. Since $\T$ is cluster tilting, we have $\delta x_2=0$. So there
exists a morphism $y\colon T_1\to X_3$ such that $x_2=\gamma y$.
$$\xymatrix@!@C=0.5cm@R=0.3cm{
&&X[-1]\ar[d]^{x_1}\\
&&T_1\ar[d]^{x_2}\ar@{.>}[dl]_{y}\\
T\ar[r]^{\beta} & X_3\ar[r]^{\gamma} & Y_T\ar[r]^{\delta\;\;}& T[1]
 }$$
Since $\X$ is $\T[1]$-rigid, we have
$x=x_2x_1=\gamma(yx_1)=0$. This shows that
\begin{equation}\label{equi:mainthm2}
\begin{array}{l}
[\T[1]](\X,Y_T[1])=0.
\end{array}
\end{equation}
For any $T'\in\T$ and morphism $u\in [\T](Y_{T'}[-1],Y_T)$, we know that there exist two morphisms $u_1\colon Y_{T'}[-1]\to T_2$ and
$u_2\colon T_2\to Y_T$ such that $u=u_2u_1$, where $T_2\in\T$.
Since $\T$ is cluster tilting, we have $\delta u_2=0$. So there
exists a morphism $v\colon T_2\to X_3$ such that $u_2=\gamma v$.
$$\xymatrix@!@C=0.5cm@R=0.3cm{
&&Y_{T'}[-1]\ar[d]^{u_1}\\
&&T_2\ar[d]^{u_2}\ar@{.>}[dl]_{v}\\
T\ar[r]^{\beta} & X_3\ar[r]^{\gamma} & Y_T\ar[r]^{\delta\;\;}& T[1]
 }$$
Since $[\T[1]](Y_T,\X[1])=0$, we have $v u_1=0$.
It follows that $u=u_2u_1=\gamma vu_1=0$. This shows that
\begin{equation}\label{equi:mainthm3}
\begin{array}{l}
[\T[1]](Y_{T'},Y_T[1])=0.
\end{array}
\end{equation}
Using equalities (\ref{equi:mainthm1}), (\ref{equi:mainthm2}) and (\ref{equi:mainthm3}), we know that $\widetilde{\X}$ is a $\T[1]$-rigid subcategory.
\medskip

Now we show that $\{M\in\C\ |\ [\T[1]](M, \widetilde{\X}[1])=0=[\T[1]](\widetilde{\X}, M[1])\}\subseteq\widetilde{\X}.$
\medskip

For any object $M\in\C$, assume that $[\T[1]](M, \widetilde{\X}[1])=0=[\T[1]](\widetilde{\X}, M[1])$. Since $\T$ is a cluster-tilting subcategory, there exists a triangle
$$T_5\s{f}\longrightarrow T_6 \s{g}\longrightarrow M \s{h}\longrightarrow T_5[1],$$
where $T_5,T_6\in\T$. By the above discussion, for object $T_6\in\T$, there exists a triangle
$$T_6\s{u}\longrightarrow X_6 \s{v}\longrightarrow Y_{T_6} \s{w}\longrightarrow T_6[1],$$
where $X_6\in\X$, $Y_{T_6}\in\widetilde{\X}$ and $u$ is a left $\X$-approximation of $T_6$.
For object $T_5\in\T$, there exists a triangle
$$T_5\s{u'}\longrightarrow X_5\s{v'}\longrightarrow Y_{T_5} \s{w'}\longrightarrow T_5[1],$$
where $X_5\in\X$, $Y_{T_5}\in\widetilde{\X}$ and $u'$ is a left $\X$-approximation of $T_5$.
By the octahedral axiom, we have a commutative diagram
$$\xymatrix{
T_5\ar[r]^{f}\ar@{=}[d]&T_6\ar[r]^{g}\ar[d]^{u}&M\ar[r]^{h}\ar[d]^{a}&T_5[1]\ar@{=}[d]\\
T_5\ar[r]^{x=uf}&X_6\ar[r]^{y}\ar[d]^{v}&N\ar[r]^{z}\ar[d]^{b}&T_5[1]\\
&Y_{T_6}\ar@{=}[r]\ar[d]^{w}&Y_{T_6}\ar[d]^{c}\\
&T_6[1]\ar[r]^{g[1]}&M[1]}$$
of triangles in $\C$.  We claim that $x$ is a left $\X$-approximation of $T_5$. Indeed, for any $d\colon T_5\to X$,
since $dh[-1]\in[\T](M[-1], \widetilde{\X})=0$, there exists a morphism $e\colon T_6\to X$ such that $d=ef$, where $X\in\X$.
$$\xymatrix{
    M[-1]\ar[r]^{\;\;\, h[-1]}&T_5 \ar[r]^{f} \ar[d]^{d} & T_6\ar@{.>}[dl]^{e} \ar[r]^{g} &M\ar[r]^{h}&T_5[1]\\
    &X&}$$
Since $u$ is a left $\X$-approximation of $T_6$, there exists a morphism $k\colon X_6\to X$ such that $ku=e$. It follows that $d=ef=kuf=kx$, as required.
\medskip

Since $x$ is a left $\X$-approximation of $T_5$, by Lemma 1.4.3 in \cite{ne},  we have the following commutative diagram
$$\xymatrix{
    T_5 \ar[r]^{x} \ar@{=}[d] & X_6\ar[r]^{y}\ar[d]^{\lambda} &N\ar[r]^{z}\ar@{.>}[d]^{\varphi}&T_5[1]\ar@{=}[d]\\
    T_5\ar[r]^{u'}&X_5\ar[r]^{v'}&Y_{T_5}\ar[r]^{w'}&T_5[1],}$$
where the middle square is homotopy cartesian and the differential $\partial=x[1]\circ w'$, that is, there exists a triangle
$$X_6\xrightarrow{\binom{-y}{\lambda}} N\oplus X_5 \xrightarrow{(\varphi,\ v')} Y_{T_5} \xrightarrow{~~\partial~~}X_6[1].$$
Note that $\partial\in[\T[1]](\widetilde{\X},\widetilde{\X}[1])=0$.
Thus we have $N\oplus X_5\simeq X_6\oplus Y_{T_5}\in\widetilde{\X}$, which implies $N\in\widetilde{\X}$.
 Since $c=g[1]w\in [\T[1]](\widetilde{\X},M[1])=0$, we know that the triangle
$$\xymatrix{M\ar[r]^{a}&N\ar[r]^{b}&Y_{T_6}\ar[r]^c&M[1]}$$
splits.  Hence $M$ is a direct summand of $N$ and then $M\in\widetilde{\X}$.
\medskip

This shows that $\widetilde{\X}=\{M\in\C\ |\ [\T[1]](M, \widetilde{\X}[1])=0=[\T[1]](\widetilde{\X}, M[1])\}.$
\vspace{3mm}

For any object $T\in\T$, $\mathbb{H}(Y_T)\simeq \mathbb{H}(X_4)$. Therefore $$\mathbb{H}(\widetilde{\X})\simeq \mathbb{H}(\X)\simeq \M.$$
Since $\T\cap \widetilde{\X}[-1]\supseteq \T\cap \X[-1]=\E$ and $\T\cap \widetilde{\X}[-1]\subseteq
\Ker\ \mathbb{H}(\X)=\E$, we have $$\T\cap \widetilde{\X}[-1]=\E.$$
This shows that $\Phi$ is surjective.
\medskip

\textbf{Step 3:} The map $\Phi$ is injective.
\medskip

This follows from  Step 3 in Theorem \ref{a7}.  \qed
\medskip

For any support $\tau$-tilting subcategory $\Y$ of $\mod\, \T$, by Theorem \ref{a9} there exists a unique weak $\T[1]$-cluster tilting subcategory $\X$ of $\C$ such that $\mathbb{H}(\X)=\Y$. Throughout this paper, we denote the preimage $\X$ by $\mathbb{H}^{-1}(\Y)$ for simplicity. Consequently, we have the following result.
\begin{thm}\label{ghost tilting}
The bijection in Theorem \ref{a9} induces a bijection from the first of the following sets to the second:
\begin{itemize}
\item[\emph{(I)}] $\T[1]$-cluster tilting subcategories of $\C$.

\item[\emph{(II)}] Support $\tau$-tilting subcategories $\Y$ of $\mod\, \T$ such that $\mathbb{H}^{-1}(\Y)$ is contravariantly finite in $\C$.
\end{itemize}
Moveover, if $\C$ admits a Serre functor $\mathbb{S}$, we get a bijection from the first to the second of the following sets.
\begin{itemize}
\item[\emph{(I)}] Cluster tilting subcategories of $\C$.

\item[\emph{(II)}] Support $\tau$-tilting subcategories $\Y$ of $\mod\, \T$ such that $\mathbb{H}^{-1}(\Y)$ is contravariantly finite and $F$-stable in $\C$ .
\end{itemize}
\end{thm}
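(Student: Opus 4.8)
The plan is to obtain both bijections by restricting the bijection $\Phi$ of Theorem \ref{a9}, so the whole argument is a matching-up of definitions rather than a new construction. Throughout, $\T$ is the fixed cluster tilting subcategory of $\C$.

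The first key point I would establish is that a subcategory $\X$ of $\C$ is $\T[1]$-cluster tilting if and only if it is weak $\T[1]$-cluster tilting and contravariantly finite in $\C$. For the ``only if'' direction, Theorem \ref{a3} identifies $\T[1]$-cluster tilting subcategories with the contravariantly finite maximal $\T[1]$-rigid ones, and then Corollary \ref{cor:subset} yields $\T\subseteq\X[-1]\ast\X$; together with the defining equality from Definition \ref{a2}(iv) this says precisely that $\X$ is weak $\T[1]$-cluster tilting. For the ``if'' direction, a weak $\T[1]$-cluster tilting subcategory already satisfies the equality in Definition \ref{a2}(iv), so if it is moreover contravariantly finite then it is $\T[1]$-cluster tilting by definition.

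Next I would invoke Theorem \ref{a9}: $\Phi$ is a bijection from weak $\T[1]$-cluster tilting subcategories of $\C$ onto support $\tau$-tilting pairs of $\mod\,\T$; since in such a pair the second component is $\Ker$ of the first, we may identify the pair with the support $\tau$-tilting subcategory $\Y=\mathbb{H}(\X)$, whose preimage is $\X=\mathbb{H}^{-1}(\Y)$. By the previous paragraph, $\X$ is $\T[1]$-cluster tilting exactly when $\mathbb{H}^{-1}(\Y)$ is contravariantly finite in $\C$, so $\Phi$ restricts to the bijection between (I) and (II) of the first statement. For the second statement, assume $\C$ has a Serre functor $\mathbb{S}$. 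By Proposition \ref{prop:ctsubcat} and Proposition \ref{FT}(b), every cluster tilting subcategory is $F$-stable and $\T[1]$-cluster tilting, while Theorem \ref{thm:F-stable} supplies the converse: an $F$-stable $\T[1]$-cluster tilting subcategory is cluster tilting. Hence the cluster tilting subcategories are exactly the $\X=\mathbb{H}^{-1}(\Y)$ that are contravariantly finite and $F$-stable, and the first bijection restricts to the desired one.

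The main (and essentially the only) obstacle is conceptual rather than computational: one must check that ``ghost cluster tilting'' as used in Theorem \ref{thm:F-stable} (where the cluster tilting subcategory is allowed to vary) matches the fixed-$\T$ notion ``$\T[1]$-cluster tilting'' used here and in Theorem \ref{a9}. This is reconciled by Proposition \ref{prop:ctsubcat}, which shows a cluster tilting subcategory is $\T[1]$-cluster tilting for \emph{every} choice of $\T$; combined with the equivalence of the first paragraph, cluster tilting implies $F$-stable and $\T[1]$-cluster tilting. Once this is noted, the theorem follows formally from Theorems \ref{a3}, \ref{a9}, \ref{thm:F-stable} and Corollary \ref{cor:subset}, with no further calculation required.
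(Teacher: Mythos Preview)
Your proposal is correct and follows essentially the same approach as the paper: the paper's proof simply asserts that $\T[1]$-cluster tilting subcategories are precisely the contravariantly finite weak $\T[1]$-cluster tilting ones, and then invokes Theorem \ref{thm:F-stable} for the second bijection. Your version supplies the details behind that assertion (via Theorem \ref{a3} and Corollary \ref{cor:subset}, or equivalently the corollary immediately following Corollary \ref{cor:subset}) and is more careful than the paper in reconciling the variable-$\T$ phrasing of Theorem \ref{thm:F-stable} with the fixed-$\T$ situation here, which is a genuine point worth making.
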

\begin{proof} The first bijection follows from $\T[1]$-cluster tilting subcategories of $\C$ are precisely contravariantly finite weak $\T[1]$-cluster tilting subcategories, and the second bijection follows from Theorem \ref{thm:F-stable}.
\end{proof}

\subsection{$\tau$-tilting subcategories and tilting subcategories}
In this subsection, we assume that $\C$ is   a triangulated category with a cluster tilting subcategory $\T$.
 By definition we know that the category $\mod\T$ is abelian and has enough projectives. Thus we can investigate the projective dimension of an object $M$ in $\mod\T$, which we denote by $\pd M$. For a subcategory $\D$ of $\mod\T$, we say that the projective dimension of $\D$ is at most $n$, denoted by $\pd\D\leq n$, if $\pd M\leq n$ for any object $M\in\D$.
\medskip

Let $X\in \C$, $\I_{X}(\T[1])$ be the ideal of $\T[1]$ formed by the morphisms
between objects in $\T[1]$ factoring through the object $X$. For a subcategory $\D$ of $\C$, we define the \emph{factorization ideal }of $\D$, denoted by $\I_{\D}(\T[1])$, as follows
$$\I_{\D}(\T[1]):= \{\ \I_{X}(\T[1])\ |\ X\in\D\ \}.$$
Theorem \ref{a5} indicates that $\mod\, \T$ is Gorenstein of dimension at most one. Thus all objects in $\mod\, \T$ have projective dimension zero, one or infinity. The following result characterizes the objects in $\mod\T$ having finite projective dimension.

\begin{thm}\emph{\cite{BBT,la}}\label{factor}
Let $\C$ be a triangulated category with a cluster tilting subcategory $\T$, and $X$ be an object in $\C$ having no direct summands in $\T[1]$. Then $$\pd \mathbb{H}(X)\leq 1\ if\  and\ only\ if\  \I_{X}(\T[1])=0.$$
\end{thm}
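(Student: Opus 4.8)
The plan is to fix a triangle $T_1\xrightarrow{f}T_0\xrightarrow{g}X\xrightarrow{h}T_1[1]$ with $T_0,T_1\in\T$ (which exists because $\C=\T\ast\T[1]$ by Proposition \ref{FT}) and to analyse the projective presentation
\[
\mathbb{H}(T_1)\xrightarrow{\ \mathbb{H}(f)\ }\mathbb{H}(T_0)\xrightarrow{\ \mathbb{H}(g)\ }\mathbb{H}(X)\longrightarrow 0
\]
obtained by applying $\mathbb{H}$, as in the proof of Theorem \ref{a5}. Everything reduces to the chain of equivalences
\[
\I_X(\T[1])=0\ \Longleftrightarrow\ \mathbb{H}(f)\ \text{is a monomorphism}\ \Longleftrightarrow\ \pd\mathbb{H}(X)\leq 1 .
\]

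For the first equivalence, I would first observe that since $\T$ is rigid one has $\Hom_{\C}(T_0,T'[1])=\Ext^1_{\C}(T_0,T')=0$ for every $T'\in\T$, so the triangle above shows that every morphism $X\to T'[1]$ factors through $h$. Consequently a morphism between objects of $\T[1]$ that factors through $X$ has the form $\nu\circ h\circ\gamma$ with $\gamma\colon T[1]\to X$ and $\nu\colon T_1[1]\to T'[1]$, whence $\I_X(\T[1])=0$ if and only if $h\gamma=0$ for every morphism $\gamma\colon T[1]\to X$ with $T\in\T$ (if some $h\gamma\neq 0$, then an indecomposable component of it is a nonzero element of $\I_X(\T[1])$). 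Applying $\Hom_{\C}(T[1],-)$ to the triangle and using the shift identification $\Hom_{\C}(T[1],T_i[1])\cong\Hom_{\C}(T,T_i)=\mathbb{H}(T_i)(T)$, one checks that $\{\,h\gamma\mid\gamma\colon T[1]\to X\,\}$ is exactly the kernel of $\mathbb{H}(f)(T)\colon\mathbb{H}(T_1)(T)\to\mathbb{H}(T_0)(T)$. Thus $\I_X(\T[1])=0$ says precisely that $\mathbb{H}(f)$ is pointwise injective, i.e. a monomorphism in $\mod\T$ (monomorphisms there being detected pointwise).

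The implication ``$\mathbb{H}(f)$ mono $\Rightarrow\pd\mathbb{H}(X)\leq 1$'' is immediate, since then $0\to\mathbb{H}(T_1)\to\mathbb{H}(T_0)\to\mathbb{H}(X)\to 0$ is a projective resolution. The converse is where the hypothesis that $X$ has no direct summand in $\T[1]$ enters, and I expect this to be the main obstacle. If $\pd\mathbb{H}(X)\leq 1$, the syzygy $\Omega\mathbb{H}(X)=\Im\mathbb{H}(f)$ is projective, so the short exact sequence $0\to\Ker\mathbb{H}(f)\to\mathbb{H}(T_1)\to\Omega\mathbb{H}(X)\to 0$ splits and $\Ker\mathbb{H}(f)$ is a projective direct summand of $\mathbb{H}(T_1)$. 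Since $\mathbb{H}$ restricts to an equivalence $\T\xrightarrow{\ \sim\ }\proj(\mod\,\T)$, this summand is $\mathbb{H}(T'')$ for a direct summand $T''$ of $T_1$, and as $\mathbb{H}(f)$ kills it, faithfulness of $\mathbb{H}|_{\T}$ gives $f|_{T''}=0$. Writing $T_1=T''\oplus\bar T_1$, the map $f$ becomes block-diagonal after reordering, so the triangle decomposes as the direct sum of a triangle $\bar T_1\to T_0\to\bar X\to\bar T_1[1]$ and the trivial triangle $T''\xrightarrow{0}0\to T''[1]\to T''[1]$; hence $X\cong\bar X\oplus T''[1]$. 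Since $X$ has no summand in $\T[1]$ this forces $T''=0$, i.e. $\Ker\mathbb{H}(f)=0$. Combining the three equivalences proves the theorem; along the way one only needs the routine facts that $\Ker\mathbb{H}(f)$ and $\Omega\mathbb{H}(X)$ lie in the abelian category $\mod\T$.
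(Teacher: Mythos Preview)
The paper does not prove this result; it is cited from \cite{BBT,la} without argument. Your proof is correct and is essentially the argument one finds in those references.

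Two small points would tighten the write-up. First, the remark about ``an indecomposable component'' is unnecessary: if $h\gamma\neq 0$ for some $\gamma\colon T[1]\to X$ with $T\in\T$, then $h\gamma\colon T[1]\to T_1[1]$ is itself a nonzero morphism between objects of $\T[1]$ that factors through $X$, hence already a nonzero element of $\I_X(\T[1])$. Second, the claim that the triangle splits as a direct sum deserves one more line: once $f|_{T''}=0$, the morphism $f$ is the direct sum $0\oplus\bar f\colon T''\oplus\bar T_1\to 0\oplus T_0$, and in any triangulated category the cone of a direct sum of morphisms is the direct sum of the cones, giving $X\simeq T''[1]\oplus\mathrm{cone}(\bar f)$. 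Then Krull--Schmidt (which is assumed for $\C$) forces $T''=0$, i.e.\ $\Ker\mathbb{H}(f)=0$. With these cosmetic fixes the argument is complete.
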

In this subsection, we introduce two important classes of subcategories of $\mod\T$ and give a connection with ghost cluster tilting subcategories and cluster tilting subcategories of $\C$. We start with the following definition.

\begin{defn}\label{tilting}
Let $\C$ be a triangulated category with a cluster tilting subcategory $\T$.
\begin{enumerate}

\item[\emph{(i)}] A subcategory $\M$ of $\mod\, \T$ is said to be
  \emph{$\tau$-tilting} if $(\M, 0)$ is a support $\tau$-tilting pair of $\mod\T$.
\item[\emph{(ii)}]  \emph{\cite{Be2}} A subcategory $\M$ of $\mod\, \T$ is said to be
  \emph{weak tilting} if the following three conditions are satisfied:

\begin{enumerate}
\item[\emph{(T1)}] $\emph{\Ext}^1_{\mod\T}(\M,\M)=0.$
\item[\emph{(T2)}] $\pd M\leq 1$, for any $M\in\M$.
\item[\emph{(T3)}] for any projective object $P$ in $\mod\T$, there exists a
short exact sequence
$$0\s{}\longrightarrow P \s{}\longrightarrow M_0 \s{}\longrightarrow M_1\s{}\longrightarrow 0$$
where $M_0,M_1\in\M$.
\end{enumerate}
A weak tilting subcategory $\M$ is called a \emph{tilting subcategory} if it also satisfies the
following additional condition:
\begin{enumerate}
\item[\emph{(T4)}] $\M$ is contravariantly finite in $\mod\T$.
\end{enumerate}

\end{enumerate}
\end{defn}

\begin{rem}
Beligiannis \emph{\cite{Be1,Be2}} indicates that a contravariantly finite subcategory $\M$ of $\mod\T$ is a tilting subcategory if and only if
$$\Fac(\M) = \{\ X\in\mod\T\ |\ \text{\emph{Ext}}^1_{\mod\T}(\M, X) = 0\ \},$$
where $\Fac(\M)$ is the full subcategory of $\mod\T$ consisting of all factors of objects of $\M$.
\end{rem}
Immediately, we have the following result.
\begin{thm}\label{thm5}
Let $\C$ be a triangulated category with a cluster tilting subcategory $\T$.
The functor $\mathbb{H}\colon \C\to \Mod\, \T$ induces a bijection $$\Phi\colon \X\longmapsto \mathbb{H}(\X)$$
from the first of the following sets to the second:
\begin{itemize}
\item[\emph{(I)}] Weak $\T[1]$-cluster tilting subcategories of $\C$ whose objects do not have non-zero direct summands in $\T[1]$.

\item[\emph{(II)}] $\tau$-tilting subcategories of $\mod\,\T$.
\end{itemize}
It restricts to a bijection from the first to the second of the following sets.
\begin{itemize}
\item[\emph{(I)}] $\T[1]$-cluster tilting subcategories of $\C$ whose objects do not have non-zero direct summands in $\T[1]$.

\item[\emph{(II)}] $\tau$-tilting subcategories $\Y$ of $\mod\, \T$ such that $\mathbb{H}^{-1}(\Y)$ is contravariantly finite in $\C$.
\end{itemize}
Moreover, if $\C$ admits a Serre functor $\mathbb{S}$, we get the following bijection.
\begin{itemize}
\item[\emph{(I)}] Cluster tilting subcategories of $\C$ whose objects do not have non-zero direct summands in $\T[1]$.
\item[\emph{(II)}] $\tau$-tilting subcategories $\Y$ of $\mod\, \T$ such that $\mathbb{H}^{-1}(\Y)$ is contravariantly finite and $F$-stable in $\C$ .
\end{itemize}
\end{thm}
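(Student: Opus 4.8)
The plan is to leverage Theorem~\ref{a9} as a black box and simply identify, on each side of that bijection, the subclass corresponding to ``no non-zero direct summands in $\T[1]$'' versus ``the second component $\E$ of the support $\tau$-tilting pair is zero''. First I would observe that for a weak $\T[1]$-cluster tilting subcategory $\X$ with $\Phi(\X)=(\M,\E)=(\mathbb{H}(\X),\T\cap\X[-1])$, the condition that no object of $\X$ has a non-zero summand in $\T[1]$ is equivalent to $\T\cap\X[-1]=0$. One direction is immediate: if $X=E[1]$ with $0\ne E\in\T$ lies in $\X$, then $E\in\T\cap\X[-1]$. For the converse, suppose $E\in\T\cap\X[-1]$, so $E[1]\in\X$; since $\X$ is closed under summands, any indecomposable summand of $E[1]$ lies in $\X\cap\T[1]$, forcing $E=0$ when $\X$ has no such summands. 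Hence $\Phi$ restricts to a bijection between weak $\T[1]$-cluster tilting subcategories of $\C$ with no summands in $\T[1]$ and $\tau$-rigid pairs $(\M,0)$ that are support $\tau$-tilting, which by Definition~\ref{tilting}(i) is exactly the class of $\tau$-tilting subcategories of $\mod\,\T$. On this restricted locus the pair $(\M,\E)$ is determined by $\M$ alone, so $\Phi$ takes the stated simplified form $\X\mapsto\mathbb{H}(\X)$.

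Next I would deduce the two further restrictions. For the $\T[1]$-cluster tilting statement, recall (as used in the proof of Theorem~\ref{ghost tilting}) that a subcategory is $\T[1]$-cluster tilting precisely when it is a contravariantly finite weak $\T[1]$-cluster tilting subcategory; intersecting this with the ``no summands in $\T[1]$'' condition on the left and transporting along $\Phi$ gives exactly those $\tau$-tilting subcategories $\Y$ for which the preimage $\mathbb{H}^{-1}(\Y)$ is contravariantly finite in $\C$. For the Serre-functor statement, invoke Theorem~\ref{thm:F-stable}: when $\C$ has a Serre functor, $F$-stable ghost cluster tilting subcategories are exactly cluster tilting subcategories. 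So among the $\T[1]$-cluster tilting subcategories with no summand in $\T[1]$, the cluster tilting ones are the $F$-stable ones, and under $\Phi$ these correspond to $\tau$-tilting subcategories $\Y$ whose preimage $\mathbb{H}^{-1}(\Y)$ is both contravariantly finite and $F$-stable in $\C$. Injectivity and surjectivity on each level are inherited from Theorem~\ref{a9}, since we are merely restricting an already-established bijection to matched subclasses on both sides.

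I should also double-check one small compatibility point: that a cluster tilting subcategory of $\C$ automatically has no non-zero direct summands in $\T[1]$. This holds because for $0\ne E\in\T$ we have $\mathrm{Ext}^1_\C(\T,E[1])=\mathrm{Hom}_\C(\T,E[2])$, and since $\T$ is cluster tilting this is not identically zero (taking a suitable object mapping to $E[2]$ via the triangle structure), so $E[1]$ cannot lie in a cluster tilting subcategory; more cleanly, $E[1]\in\X$ with $\X$ cluster tilting would force $E[1]\in\T$, contradicting that $\T$ is rigid unless $E=0$. This confirms the third bijection is genuinely a restriction of the second along the inclusion ``cluster tilting $\Rightarrow$ $\T[1]$-cluster tilting with no $\T[1]$-summands''.

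The main obstacle, such as it is, is purely bookkeeping: making sure the ``no non-zero direct summands in $\T[1]$'' condition on $\X$ is faithfully matched with $\E=0$ on the $\mod\,\T$ side, and that this matching is compatible with the further restrictions (contravariant finiteness, $F$-stability) already handled in Theorems~\ref{ghost tilting} and~\ref{thm:F-stable}. No new categorical input is needed beyond those results and Theorem~\ref{a9}; the proof is essentially the verification of the equivalence $\T\cap\X[-1]=0 \iff \X\cap\T[1]=0$ together with an appeal to Definition~\ref{tilting}(i).
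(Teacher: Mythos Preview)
Your approach is essentially identical to the paper's: the paper's proof is a one-line observation that ``objects in $\X$ do not have non-zero direct summands in $\T[1]$ if and only if $\T\cap\X[-1]=0$'', followed by a direct appeal to Theorem~\ref{a9} and Theorem~\ref{ghost tilting}. Your proposal spells out exactly this equivalence and these appeals, so the core argument is correct and matches the paper.

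One caveat: your ``compatibility point'' paragraph is both unnecessary and contains a false claim. Cluster tilting subcategories do \emph{not} automatically avoid $\T[1]$; for instance $\T[1]$ itself is cluster tilting (the shift is an autoequivalence), and it consists entirely of objects of $\T[1]$. Your attempted argument that ``$E[1]\in\X$ with $\X$ cluster tilting would force $E[1]\in\T$'' has no justification. Fortunately this paragraph is irrelevant to the proof: the theorem statement already imposes ``no non-zero direct summands in $\T[1]$'' as an explicit hypothesis on the cluster tilting subcategories in part~(III), so the third bijection is manifestly a restriction of the second by intersecting with the $F$-stable condition via Theorem~\ref{thm:F-stable}. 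Simply delete that paragraph and the proof is fine.
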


\proof Note that objects in $\X$ do not have non-zero direct summands in $\T[1]$ if and only if $\T\cap\X[-1] =0$. This assertion follows from Theorem \ref{a9} and Theorem \ref{ghost tilting} directly.    \qed
\medskip

Now we give a close relationship between $\tau$-tilting subcategories and weak tilting subcategories.

\begin{lem}\label{lem4}
Let $\C$ be a triangulated category with a cluster tilting subcategory $\T$. Then
any weak tilting subcategory of $\mod\T$ is a $\tau$-tilting subcategory.
\end{lem}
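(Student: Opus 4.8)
The plan is to show that a weak tilting subcategory $\M$ of $\mod\T$ satisfies the definition of a $\tau$-tilting subcategory, i.e.\ that $(\M,0)$ is a support $\tau$-tilting pair in the sense of Definition \ref{a1}. There are three things to verify: that $\M$ is $\tau$-rigid (there is a class of projective presentations of objects of $\M$ with Property (S)); that $\mathrm{Ker}(\M)=0$; and that for every $T\in\T$ there is an exact sequence $\T(-,T)\xrightarrow{f}M^0\to M^1\to 0$ with $M^0,M^1\in\M$ and $f$ a left $\M$-approximation.

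First I would handle $\tau$-rigidity. By (T2) every $M\in\M$ has $\pd M\le 1$, so for each $M$ there is a projective resolution $0\to P_1\xrightarrow{\pi^M}P_0\to M\to 0$ in $\mod\T$; I will take this canonical choice. For $M,M'\in\M$, applying $\Hom_{\mod\T}(-,M')$ to this short exact sequence yields an exact sequence $\Hom(P_0,M')\xrightarrow{\Hom(\pi^M,M')}\Hom(P_1,M')\to\Ext^1_{\mod\T}(M,M')\to 0$ (using $\Ext^1(P_0,M')=0$), so surjectivity of $\Hom(\pi^M,M')$ is exactly the vanishing $\Ext^1_{\mod\T}(M,M')=0$, which is (T1). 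Hence the class $\{\pi^M\mid M\in\M\}$ has Property (S) and $\M$ is $\tau$-rigid.

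Next, $\mathrm{Ker}(\M)=0$: an object $E\in\T$ lies in $\mathrm{Ker}(\M)$ iff $M(E)=0$ for all $M\in\M$, i.e.\ $\Hom_{\mod\T}(\T(-,E),M)=0$ for all $M\in\M$ by Theorem \ref{a5}(ii), where $\T(-,E)$ is projective. Applying $\Hom_{\mod\T}(-,M)$ to the sequence in (T3) for the projective $P=\T(-,E)$, namely $0\to \T(-,E)\to M_0\to M_1\to 0$ with $M_i\in\M$, and using $\Ext^1_{\mod\T}(M_1,M)=0$ from (T1), gives that $\Hom(M_0,M)\to\Hom(\T(-,E),M)$ is surjective; so if $\Hom(\T(-,E),M)=0$ for all $M$ we get in particular, taking a suitable $M\in\M$ — here I must be slightly careful — that the identity-type information forces $\T(-,E)$ to be a summand of $M_0\in\M$, hence $\T(-,E)\in\M$; but then $\Hom(\T(-,E),\T(-,E))=0$ forces $\T(-,E)=0$, i.e.\ $E=0$ in $\T$. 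Thus $\mathrm{Ker}(\M)=0$.

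Finally, for the approximation condition: given $T\in\T$, the object $\T(-,T)=\mathbb{H}(T)$ is projective in $\mod\T$, so by (T3) there is a short exact sequence $0\to\T(-,T)\xrightarrow{f}M^0\to M^1\to 0$ with $M^0,M^1\in\M$. It remains to check $f$ is a left $\M$-approximation: applying $\Hom_{\mod\T}(-,M)$ for $M\in\M$ and using $\Ext^1_{\mod\T}(M^1,M)=0$ (which is (T1)), the map $\Hom(M^0,M)\to\Hom(\T(-,T),M)$ is surjective, which is precisely the statement that $f$ is a left $\M$-approximation. This verifies all conditions of Definition \ref{a1}(iv) with $\E=0$, so $(\M,0)$ is support $\tau$-tilting and $\M$ is a $\tau$-tilting subcategory. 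The main obstacle I anticipate is the $\mathrm{Ker}(\M)=0$ step: extracting from the vanishing $M(E)=0$ for all $M\in\M$ the conclusion that the projective $\T(-,E)$ is itself a direct summand of some object in $\M$ (hence in $\M$), and then deriving $E=0$ — this requires using (T3) applied to the projective $\T(-,E)$ together with (T1) carefully, rather than any of the structural triangulated-category input. The other two steps are essentially formal consequences of the long exact sequence in $\Ext$ together with $\pd\le 1$.
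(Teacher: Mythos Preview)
Your argument for $\tau$-rigidity and for the left $\M$-approximation condition is exactly the paper's argument: take the length-one projective resolution from (T2), apply $\Hom(-,M')$, and use (T1); and for (T3) apply $\Hom(-,M)$ and use (T1) again. These two steps are fine and match the paper verbatim.

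The only place where your proposal diverges, and where there is a genuine gap, is the verification of $\mathrm{Ker}(\M)=0$. You correctly start from the (T3) sequence $0\to\T(-,E)\to M_0\to M_1\to 0$, but the claim that ``identity-type information forces $\T(-,E)$ to be a summand of $M_0$'' is not justified: surjectivity of $\Hom(M_0,M)\to\Hom(\T(-,E),M)$ for $M\in\M$ does not produce a retraction unless $\T(-,E)$ is already known to lie in $\M$. The paper's argument avoids this detour entirely and is much simpler: just \emph{evaluate} the exact sequence $0\to\T(-,E)\to M_0\to M_1\to 0$ at the object $E$, obtaining $0\to\T(E,E)\to M_0(E)\to M_1(E)\to 0$; since $M_0\in\M$ and $E\in\mathrm{Ker}(\M)$ we have $M_0(E)=0$, hence $\T(E,E)=0$ and so $E=0$. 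Equivalently, the monomorphism $\T(-,E)\hookrightarrow M_0$ is itself an element of $\Hom_{\mod\T}(\T(-,E),M_0)\cong M_0(E)=0$, hence is zero, forcing $\T(-,E)=0$. Either formulation replaces your unsubstantiated summand step with a one-line computation.
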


\proof Let $\M$ be a weak tilting subcategory of $\mod\T$.
\medskip

(1) We first show that $(\M, 0)$ is a $\tau$-rigid pair of $\mod\T$. For any object $M\in \M$, since $\pd M\leq 1$, we get a short exact sequence
$$0\longrightarrow P_1 \stackrel{ \pi^M }{ \longrightarrow } P_0 \longrightarrow M \longrightarrow 0.$$
   Note that $P_1=0$ if $\pd M=0$. Applying the functor $\Hom_{\mod\T}(-,\M)$ to it, we get an exact sequence
$$\Hom_{\mod\T}(P_0,\M)\xrightarrow{\circ \pi^M}\Hom_{\mod\T}(P_1,\M)\xrightarrow{}\Ext^1_{\mod\T}(M,\M)=0.$$
This means that there is a class of projective presentations
  $\{P_1 \stackrel{ \pi^M }{\rightarrow } P_0 \rightarrow M \rightarrow 0\mid M\in\M\}$
  which has Property (S). Therefore $(\M, 0)$ is a $\tau$-rigid pair of $\mod\T$ because $\M(0)=0$.
\medskip

(2) We show that $(\M, 0)$ is a support $\tau$-tilting pair of $\mod\T$. For each object $T\in\T$, $\T(-, T)$ is a projective object in $\mod\T$. Since $\M$ is weak tilting in $\mod\T$, there exists a
short exact sequence
$$0\s{}\longrightarrow \T(-, T) \s{f}\longrightarrow M_0 \s{}\longrightarrow M_1\s{}\longrightarrow 0$$
where $M_0,M_1\in\M$.
Applying the functor $\Hom_{\mod\T}(-,\M)$ to the above exact sequence, we have the following exact sequence:
$$\Hom_{\mod\T}(M_0,\M)\xrightarrow{\circ f}\Hom_{\mod\T}(\T(-, T),\M)\xrightarrow{}\Ext^1_{\mod\T}(M_1,\M)=0.$$
This shows that $f$ is a left $\M$-approximation.
\medskip

If $\M(E)=0$, where $E\in\T$, by the above discussion,  there exists an exact sequence $$0\s{}\longrightarrow \T(-,E) \s{}\longrightarrow M_0 \s{}\longrightarrow M_1\s{}\longrightarrow 0$$ with
  $M^0, M^1 \in \M$. It follows that there exists an exact sequence
  $$0\s{}\longrightarrow \T(E,E) \s{}\longrightarrow M_0(E) \s{}\longrightarrow M_1(E)\s{}\longrightarrow 0$$
Since $M_0(E)=0$, we have $\T(E,E)=0$ and then $E=0$. Therefore Ker$\,( \M )=0$.
This shows that $(\M, 0)$ is a support $\tau$-tilting pair of $\mod\T$.
 \qed
\medskip

The following result gives a criterion for a $\tau$-tilting subcategory of $\mod\T$ to be a weak tilting subcategory.
\medskip

\begin{thm}\label{thm6}
Let $\C$ be a triangulated category with a cluster tilting subcategory $\T$. A $\tau$-tilting subcategory of $\mod\T$ is a weak tilting subcategory if and only if its projective dimension is at most one.
\end{thm}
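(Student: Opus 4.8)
The plan is to prove the two implications separately, using Lemma \ref{lem4} together with the structural facts about $\mod\T$ recorded in Theorem \ref{a5}(iii) (Gorenstein of dimension $\le 1$) and Theorem \ref{factor}. Let $\M$ be a $\tau$-tilting subcategory of $\mod\T$; by Lemma \ref{lem4}'s converse direction (or rather the setup of Theorem \ref{thm5}), $\M = \mathbb{H}(\X)$ for a unique $\T[1]$-cluster tilting subcategory $\X$ of $\C$ whose objects have no nonzero direct summands in $\T[1]$. I would keep this dictionary in mind throughout: projective dimension of $\mathbb{H}(X)$ being $\le 1$ is governed, via Theorem \ref{factor}, by the vanishing of the factorization ideal $\I_X(\T[1])$.

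For the easy direction, suppose $\M$ is a weak tilting subcategory. Then condition (T2) says precisely $\pd M \le 1$ for all $M \in \M$, so the projective dimension of $\M$ is at most one; there is nothing further to check. So the content is entirely in the converse.

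For the converse, assume $\M$ is $\tau$-tilting with $\pd \M \le 1$. I must verify (T1)--(T3). For (T2) there is nothing to do: it is the hypothesis $\pd\M\le 1$. For (T3): since $(\M,0)$ is a support $\tau$-tilting pair with $\Ker(\M)=0$, the defining condition gives, for each $T \in \T$, an exact sequence $\T(-,T) \xrightarrow{f} M^0 \to M^1 \to 0$ with $M^0,M^1\in\M$ and $f$ a left $\M$-approximation. I would argue $f$ is injective: its kernel $K$ is a submodule of the projective $\T(-,T)$, so $\pd K \le \pd\T(-,T) = 0$ would need care, but more directly, since $\M(E)=0$ forces $E=0$ (as in the proof of Lemma \ref{lem4}) and $f$ is a left $\M$-approximation, the standard argument shows $\ker f$ vanishes because $\mod\T$ is Gorenstein of dimension $\le 1$ and $\pd\M\le 1$; concretely, apply $\Hom_{\mod\T}(-,\M)$ and use that $\Ext^1_{\mod\T}(M^1,\M)$ controls the obstruction. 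This yields a short exact sequence $0 \to \T(-,T) \to M^0 \to M^1 \to 0$, which is (T3) since every projective in $\mod\T$ is a summand of a sum of such $\T(-,T)$ and $\M$ is closed under summands and finite sums. For (T1): I want $\Ext^1_{\mod\T}(M,M')=0$ for $M,M'\in\M$. Using $\pd M \le 1$, pick a projective presentation $0 \to P_1 \xrightarrow{\pi^M} P_0 \to M \to 0$; Property (S) for the $\tau$-rigid family says $\Hom_{\mod\T}(\pi^M,M')$ is surjective, and since $\pd M \le 1$ this presentation *is* a projective resolution, so its cokernel computes $\Ext^1$: $\Ext^1_{\mod\T}(M,M') = \coker\big(\Hom(\pi^M,M')\big) = 0$.

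The main obstacle is the injectivity of the map $f$ in (T3) — i.e.\ upgrading the right-exact sequence $\T(-,T)\to M^0\to M^1\to 0$ coming from the support $\tau$-tilting axiom to a genuine short exact sequence. For general $\tau$-tilting subcategories one only gets right-exactness; the hypothesis $\pd\M\le 1$ is exactly what is needed here, since then $M^1$ has a length-$1$ projective resolution and a diagram chase (or the horseshoe lemma applied to $0\to\ker f\to \T(-,T)\to \Im f\to 0$ and $0\to\Im f\to M^0\to M^1\to 0$) forces $\ker f$ to be projective; combined with $\Ker(\M)=0$ and the fact that $f$ being a left $\M$-approximation makes $\ker f$ orthogonal to $\M$, one concludes $\ker f = 0$. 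I would present this diagram chase carefully, as it is the one genuinely non-formal step; the verifications of (T1) and (T2) are then immediate, and (T4) is not required since the target of the bijection is *weak* tilting subcategories.
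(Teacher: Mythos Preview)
Your strategy is sound and, interestingly, genuinely different from the paper's. The paper proves (T1) and (T3) by passing back to the triangulated category $\C$: it writes $\M=\mathbb{H}(\X)$ for a weak $\T[1]$-cluster tilting subcategory $\X$ (via Theorem~\ref{thm5}), and then manufactures the relevant short exact sequences from triangles in $\C$ (for (T1) a minimal right $\T$-approximation of $X_1\in\X$, for (T3) a triangle witnessing $\T\subseteq\X[-1]\ast\X$), using $\pd\mathbb{H}(X)\le 1$ to control kernels. Your argument stays entirely inside $\mod\T$, which is more elementary and makes the role of the hypothesis $\pd\M\le 1$ very transparent. Both approaches are legitimate; yours avoids the triangulated machinery at the cost of a couple of module-theoretic verifications you have not quite nailed down.

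There are two points where your write-up is not yet a proof. First, in (T1) you ``pick a projective presentation $0\to P_1\xrightarrow{\pi^M}P_0\to M\to 0$'' and then invoke Property~(S). But Definition~\ref{a1}(ii) only guarantees Property~(S) for \emph{some} family of presentations, and there is no reason the short resolution you chose (which exists because $\pd M\le 1$) belongs to that family. The fix is easy: take the given presentation $P_1\xrightarrow{\pi}P_0\to M\to 0$ with Property~(S), set $K=\mathrm{Im}\,\pi$, and observe that any $\alpha\colon K\to M'$ pulls back along the epimorphism $P_1\twoheadrightarrow K$ to a map $P_1\to M'$, which by Property~(S) factors through $\pi$; since $P_1\twoheadrightarrow K$ is epi this forces $\alpha$ to extend to $P_0$, whence $\Ext^1(M,M')=0$. (In fact this shows (T1) holds for any $\tau$-rigid subcategory, independent of $\pd\le 1$.)

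Second, and more seriously, in (T3) your claim that ``$f$ being a left $\M$-approximation makes $\ker f$ orthogonal to $\M$'' is not true on its own: the approximation property only says maps $P\to M$ restrict to zero on $K:=\ker f$, not that every map $K\to M$ arises this way. You need the extra step that every $K\to M$ extends to $P\to M$, i.e.\ that $\Ext^1(\mathrm{Im}\,f,M)=0$. This does follow: from $0\to\mathrm{Im}\,f\to M^0\to M^1\to 0$ and (T1) one gets $\Ext^1(M^0,M)=0$, and $\pd M^1\le 1$ gives $\Ext^2(M^1,M)=0$, so $\Ext^1(\mathrm{Im}\,f,M)=0$. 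Only then can you conclude $\Hom(K,\M)=0$, write the projective $K$ as $\T(-,E)$, and use $\Ker(\M)=0$ to force $E=0$. Once you insert this step explicitly, your (T3) argument is complete.
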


\proof Let $\M$ be a $\tau$-tilting subcategory of $\mod\T$ and $\pd\M\leq 1$. By Theorem \ref{thm5}, there exists a weak $\T[1]$-tilting subcategory $\X$ of $\C$  whose objects do not have non-zero direct summands in $\T[1]$ such that $\mathbb{H}(\X)=\M$.
\medskip

\textbf{Step 1:} We show that $\Ext^1_{\mod\T}(\M,\M)=0.$ Namely, $\Ext^1_{\mod\T}(\mathbb{H}(\X),\mathbb{H}(\X))=0.$
\vspace{1mm}

For any object $X_1\in\X$, since $\T$ is cluster tilting, there exists a triangle
\begin{equation}\label{triangle}
\begin{array}{l}
\xymatrix{T_0\ar[r]^{f}&T_1\ar[r]^{g}&X_1\ar[r]^{h\;\,}&T_0[1]},\end{array}
\end{equation}
where $g$ is a minimal right $\T$-approximation of $X_1$ and $T_0,T_1\in\T$.
Since $\mathbb{H}(X_1)\in\M$, we have $\pd \mathbb{H}(X_1)\leq1$.
Applying the functor $\mathbb{H}$ to the above triangle, we have a minimal projective presentation
$$\xymatrix{0\ar[r]&\mathbb{H}(T_0)\ar[r]^{f\circ}&\mathbb{H}(T_1)\ar[r]^{g\circ}&\mathbb{H}(X_1)\ar[r]& 0}$$
of $\mathbb{H}(X_1)$, since $X_1$ has no non-zero direct summands in $\T[1]$ and $\pd \mathbb{H}(X_1)\leq1$.
Applying the functor $\Hom_{\mod\T}(-,\mathbb{H}(X_2))$, where $X_2\in\X$ to the above exact sequence, we get an exact sequence:
$$\xymatrix{\Hom_{\mod\T}(\mathbb{H}(T_1),\mathbb{H}(X_2))\ar[r]&\Hom_{\mod\T}(\mathbb{H}(T_0),\mathbb{H}(X_2))\ar[d]\\
&\Ext^1_{\mod\T}(\mathbb{H}(X_1),\mathbb{H}(X_2))\ar[r]&\Ext^1_{\mod\T}(\mathbb{H}(T_1),\mathbb{H}(X_2))=0.}$$
The last item vanishes because $\mathbb{H}(T_1)$ is projective in $\mod\T$. Note that the first map is isomorphic to
$\Hom_{\C}(T_1,X_2)\xrightarrow{\Hom_{\C}(f,\, X_2)}\Hom_{\C}(T_0,X_2)$. This follows that
$\Ext^1_{\mod\T}(\mathbb{H}(X_1),\mathbb{H}(X_2))$ is isomorphic to $\co\Hom_{\C}(f,X_2)$.
\medskip

Applying the functor $\Hom_{\C}(-, X_2)$ to the triangle (\ref{triangle}), we have the following exact sequence:
$$\Hom_{\C}(T_1,X_2)\xrightarrow{\Hom_{\C}(f,\, X_2)}\Hom_{\C}(T_0,X_2)\xrightarrow{\Hom_{\C}(h[-1],\,X_2)}
\Hom_{\C}(X_1[-1],X_2).$$
In particular, we have the following exact sequence:
$$\Hom_{\C}(T_1,X_2)\xrightarrow{\Hom_{\C}(f,\, X_2)}\Hom_{\C}(T_0,X_2)\xrightarrow{\Hom_{\C}(h[-1],\,X_2)}
\Im\,\Hom_{\C}(h[-1],\,X_2)\xrightarrow{} 0.$$
We claim that $\Im\,\Hom_{\C}(h[-1],\,X_2)=[\T](X_1[-1],X_2)$. Indeed, $$\Im\,\Hom_{\C}(h[-1],\,X_2)\subseteq[\T](X_1[-1],X_2)$$ is clear. For any morphism $x\in[\T](X_1[-1],X_2)$, we know that
there exist two morphisms $x_1\colon X_1[-1]\to T$ and $x_2\colon T\to X_2$, where $T\in\T$ such that $x=x_2x_1$.
Since $\Hom_{\C}(T_1[-1],T)=0$, there exists a morphism $a\colon T_0\to T$ such that $ah[-1]=x_1$.
$$\xymatrix{
    T_1[-1]\ar[dr]^{0}\ar[r]^{ g[-1]}&X_1[-1] \ar[r]^{\quad h[-1]} \ar[d]^{x_1} & T_0\ar@{.>}[dl]^{a} \ar[r]^{f} &T_1\ar[r]^{g}& X_1\ar[r]^{h}&T_0[1]\\
    &T&}$$
It follows that $x=x_2x_1=(x_2a)h[-1]\in\Im\,\Hom_{\C}(h[-1],\,X_2)$, as required.
\medskip

Since $\X$ is $\T[1]$-rigid, we have $[\T](X_1[-1],X_2)=0$. Thus we obtain
$$\Ext^1_{\mod\T}(\mathbb{H}(X_1),\mathbb{H}(X_2))\simeq\co\Hom_{\C}(f,X_2)=[\T](X_1[-1],X_2)=0.$$

\textbf{Step 2:} We show that for any projective object $P$ in $\mod\T$, there exists a
short exact sequence
$$0\s{}\longrightarrow P \s{}\longrightarrow M_0 \s{}\longrightarrow M_1\s{}\longrightarrow 0$$
where $M_0,M_1\in\M$. We may assume that $P=\T(-, T)=\mathbb{H}(T)$ in $\mod\T$, where $T\in\T$.
\medskip

Since $T\in\T\subseteq\X[-1]\ast\X$, there exists a triangle
$$\xymatrix{X_3[-1]\ar[r]^{\quad u}&T\ar[r]^{v}&X_4\ar[r]^{w\;}&X_3},$$
where $X_3,X_4\in\X$. Applying the functor $\mathbb{H}$ to the above triangle, we have the following exact sequence:
$$\mathbb{H}(X_3[-1])\xrightarrow{\mathbb{H}(u)}\mathbb{H}(T)\xrightarrow{}
\mathbb{H}(X_4)\xrightarrow{}\mathbb{H}(X_3)\xrightarrow{}0.$$

We claim that $\Im\,\mathbb{H}(u)=0$. That is to say, for any morphism $y\colon T'\to X_3[-1]$, where $T'\in\T$, we
have $uy=0$. Indeed, since $\T$ is cluster tilting, there exists a triangle
$$\xymatrix{T_2\ar[r]^{\alpha}&T_3\ar[r]^{\beta}&X_3\ar[r]^{\gamma\;\,}&T_2[1]},$$
where $\beta$ is a minimal right $\T$-approximation of $X_3$ and $T_2, T_3\in\T$.
Applying the functor $\mathbb{H}$ to the above triangle, we have a minimal projective presentation
$$\mathbb{H}(X_3[-1])\xrightarrow{\mathbb{H}(\gamma[-1])}\mathbb{H}(T_2)\xrightarrow{\mathbb{H}(\alpha)}\mathbb{H}(T_3)\xrightarrow{\mathbb{H}(\beta)}\mathbb{H}(X_3)\xrightarrow{}0.$$
of $\mathbb{H}(X_3)$, since $X_1$ has no non-zero direct summands in $\T[1]$.
Since $\mathbb{H}(X_3)\in\M$, we have $\pd \mathbb{H}(X_3)\leq1$.
Thus we have $\Im\,\mathbb{H}(\gamma[-1])=0$ and then $\gamma[-1]\circ y=0$. So there exists a morphism
$b\colon T'\to T_3[-1]$ such that $y=\beta[-1]\circ b$.
$$\xymatrix@!@C=1cm@R=0.5cm{
&T'\ar[d]^{y}\ar@{.>}[dl]_{b}\\
 T_3[-1]\ar[r]^{\beta[-1]} & X_3[-1]\ar[r]^{\quad\gamma[-1]}& T_2\ar[r]^{\alpha}&T_3.}$$
It follows that $uy=(u\beta[-1])b=0\circ b=0$, as required.
Hence  we have the following exact sequence:
$$0\xrightarrow{}\mathbb{H}(T)\xrightarrow{}
\mathbb{H}(X_4)\xrightarrow{}\mathbb{H}(X_3)\xrightarrow{}0,$$
where $\mathbb{H}(X_4),\mathbb{H}(X_3)\in\M$.
\medskip

This shows that $\M$ is a weak tilting subcategory of $\mod\T$. Combining with Lemma \ref{lem4}, the assertion follows.\qed
\medskip

Consequently, we have the following result.
\begin{thm}\label{thm7}
Let $\C$ be a triangulated category with a cluster tilting subcategory $\T$.
The functor $\mathbb{H}\colon \C\to \Mod\, \T$ induces a bijection $$\Phi\colon \X\longmapsto \mathbb{H}(\X)$$
from the first of the following sets to the second:
\begin{itemize}
\item[\emph{(I)}] Weak $\T[1]$-cluster tilting subcategories of $\C$ whose objects do not have non-zero direct summands in $\T[1]$ and whose factorization ideals vanish.
\item[\emph{(II)}] Weak tilting subcategories of $\mod\,\T$.
\end{itemize}
It restricts to a bijection from the first to the second of the following sets.
\begin{itemize}
\item[\emph{(I)}] $\T[1]$-cluster tilting subcategories of $\C$ whose objects do not have non-zero direct summands in $\T[1]$ and whose factorization ideals vanish.
\item[\emph{(II)}] Tilting subcategories $\Y$ of $\mod\, \T$ such that $\mathbb{H}^{-1}(\Y)$ is contravariantly finite in $\C$.
\end{itemize}
Moreover, if $\C$ admits a Serre functor $\mathbb{S}$, we get the following bijection.
\begin{itemize}
\item[\emph{(I)}] Cluster tilting subcategories of $\C$ whose objects do not have non-zero direct summands in $\T[1]$ and whose factorization ideals vanish.
\item[\emph{(II)}] Tilting subcategories $\Y$ of $\mod\, \T$ such that $\mathbb{H}^{-1}(\Y)$ is contravariantly finite and $F$-stable in $\C$ .
\end{itemize}
\end{thm}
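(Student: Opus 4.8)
The plan is to combine the bijection of Theorem \ref{thm5}, which already handles the ``$\tau$-tilting versus weak $\T[1]$-cluster tilting'' correspondence for subcategories avoiding $\T[1]$ as summands, with the refinement coming from Theorem \ref{thm6}, which characterizes when a $\tau$-tilting subcategory is weak tilting (equivalently tilting, once contravariant finiteness is imposed). The only genuinely new ingredient needed is a dictionary, under $\mathbb{H}$, between the condition ``the factorization ideal $\I_\X(\T[1])$ vanishes'' on the triangulated side and the condition ``$\pd\mathbb{H}(\X)\le 1$'' on the module side. This is exactly what Theorem \ref{factor} supplies: for $X\in\C$ having no direct summands in $\T[1]$, one has $\pd\mathbb{H}(X)\le 1$ if and only if $\I_X(\T[1])=0$. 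So the first step is to observe that, since every object of $\X$ in set (I) already has no summands in $\T[1]$, the hypothesis $\I_\X(\T[1])=0$ is equivalent, object by object, to $\pd\mathbb{H}(X)\le 1$ for all $X\in\X$, i.e. to $\pd\mathbb{H}(\X)\le 1$.

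With that equivalence in hand, the first bijection is assembled as follows. Start from the bijection of Theorem \ref{thm5}: $\Phi\colon\X\mapsto\mathbb{H}(\X)$ is a bijection from weak $\T[1]$-cluster tilting subcategories of $\C$ whose objects have no summands in $\T[1]$, onto $\tau$-tilting subcategories of $\mod\T$. I would then restrict $\Phi$ to the subclass of those $\X$ which additionally satisfy $\I_\X(\T[1])=0$. By the previous paragraph this subclass is carried bijectively onto the class of $\tau$-tilting subcategories $\M$ of $\mod\T$ with $\pd\M\le 1$. Finally invoke Theorem \ref{thm6}: a $\tau$-tilting subcategory of $\mod\T$ has projective dimension at most one if and only if it is a weak tilting subcategory. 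Composing, $\Phi$ is a bijection between set (I) and the weak tilting subcategories of $\mod\T$, which is the first assertion.

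For the second (restricted) bijection, intersect both sides of the first bijection with the appropriate finiteness conditions. On the triangulated side, a weak $\T[1]$-cluster tilting subcategory is $\T[1]$-cluster tilting precisely when it is contravariantly finite in $\C$ (this is Definition \ref{a2} together with Theorem \ref{a3}, and is the bookkeeping already used in Theorem \ref{ghost tilting}). On the module side, a weak tilting subcategory is by definition a tilting subcategory exactly when it is contravariantly finite in $\mod\T$ (condition (T4)); and since $\M=\mathbb{H}(\X)$ with $\X=\mathbb{H}^{-1}(\M)$, requiring contravariant finiteness of $\X$ in $\C$ is what singles out the tilting subcategories $\Y$ of $\mod\T$ for which $\mathbb{H}^{-1}(\Y)$ is contravariantly finite. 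Hence the first bijection restricts as claimed. The third bijection then follows by the same mechanism as in Theorems \ref{ghost tilting} and \ref{thm5}: when $\C$ has a Serre functor, Theorem \ref{thm:F-stable} identifies the $F$-stable $\T[1]$-cluster tilting (equivalently $F$-stable ghost cluster tilting) subcategories with the cluster tilting subcategories, so one further restricts to the $F$-stable ones on the left and to those $\Y$ with $\mathbb{H}^{-1}(\Y)$ both contravariantly finite and $F$-stable on the right.

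The main obstacle is pinpointing and justifying the equivalence between the vanishing of the factorization ideal and $\pd\mathbb{H}(\X)\le 1$ uniformly over the whole subcategory; once Theorem \ref{factor} is applied at the level of individual objects this is routine, but it is the only place where the new hypothesis (``whose factorization ideals vanish'') enters, and everything else is a matter of threading together bijections already established. A secondary point to check carefully is that the condition ``objects do not have non-zero direct summands in $\T[1]$'' is stable under the constructions $\X\mapsto\mathbb{H}(\X)$ and its inverse in the presence of the factorization-ideal hypothesis — but this was already verified in the proof of Theorem \ref{thm5} via the identity $\T\cap\X[-1]=0$, so it carries over without change.
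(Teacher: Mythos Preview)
Your proposal is correct and matches the paper's approach exactly: the paper's own proof is the single sentence ``This follows from Theorem \ref{factor}, Theorem \ref{thm5} and Theorem \ref{thm6} directly,'' and you have simply unpacked how those three results thread together. Your identification of Theorem \ref{factor} as the bridge between ``$\I_\X(\T[1])=0$'' and ``$\pd\mathbb{H}(\X)\le 1$'' is precisely the point, and the rest is bookkeeping already established in Theorems \ref{thm5} and \ref{ghost tilting}.
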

\proof This follows from Theorem \ref{factor},  Theorem \ref{thm5} and Theorem \ref{thm6} directly. \qed

\begin{rem}
The above results generalize and improve several results in the literature. More
precisely, Theorem \ref{a7},  Theorem \ref{a9}, Theorem \ref{thm5} and Theorem \ref{thm7}
generalize a result of Yang-Zhu, see \cite[Theorem 3.6]{YZ}, where analogous results
were proved in case $\C$ is $2$-Calabi-Yau and $\T=\add T$ \cite[Theorem 4.1]{AIR}. Theorem \ref{thm7} generalizes a  result of Beligiannis \cite[Theorem 6.6]{Be2} in some cases, but we don't assume that $\mod\X$ has finite global dimension here.
\end{rem}

\section{A characterization of ghost cluster tilting objects}
\setcounter{equation}{0}

In this section, we always assume that $\C$ is a triangulated category with a Serre functor and with cluster tilting objects.
\medskip

In Section \ref{sect:1}, we have given the following notion.

\begin{defn}\label{y5}
An object $X$ in $\C$ is called {\rm ghost cluster tilting} if  there exists a cluster tilting object $T$ such that
$$\add X=\{M\in\C\ |\ [T[1]](M, X[1])=0=[T[1]](X, M[1])\}.$$
In this case, $X$ is also called $T[1]${\rm -cluster tilting}.
\end{defn}

We recall the definition of relative cluster tilting objects from \cite{YZ}.

\begin{defn}\label{y1}{\emph{\cite[Definition 3.1]{YZ}}}
An object $X$ in $\C$ is called {\rm relative cluster tilting} if there exists a cluster tilting object $T$ such that $X$ is $T[1]$-rigid and $|X|=|T|(=r(\C))$. In this case, $X$ is also called \emph{$T[1]$-cluster tilting}.
\end{defn}

The aim of this section is to show that these two definitions are equivalent. The following analog of Lemma \ref{lem:main} is crucial in this section.
\begin{lem}\label{lem:main2}
 Let $X$ be a $T[1]$-cluster tilting object in Definition \ref{y1}. For any $T_0\in\add T$, if $$M[-1]\s{f}\longrightarrow T_0 \s{g}\longrightarrow X_0 \s{h}\longrightarrow M$$ is a triangle such that $X_0\in\add X$ and $g\colon T_0\longrightarrow X_0$ is a left $\add X$-approximation of $T_0$, then $M\in \add X$. Dually, if
 $$X_0[-1]\s{f}\longrightarrow T_0 \s{g}\longrightarrow M\s{h}\longrightarrow X_0$$ is a triangle such that $X_0\in\add X$ and $f\colon X_0[-1]\longrightarrow T_0$ is a right $\add X[-1]$-approximation of $T_0$, then $M\in \add X$.
\end{lem}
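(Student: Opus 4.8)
The plan is to derive the lemma from Lemma~\ref{lem:main}. Writing $\T=\add T$ and $\X=\add X$, the two triangles in the statement supply exactly the data in the hypotheses of parts (a) and (b) of Lemma~\ref{lem:main}: a left $\X$-approximation $T_0\to X_0$ in the first case, a right $\X[-1]$-approximation $X_0[-1]\to T_0$ in the second, with $T_0\in\T$ and $X_0\in\X$. Hence it suffices to show that $\add X$ is a \emph{maximal} $T[1]$-rigid subcategory of $\C$.

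\textbf{Claim.} If $X$ is $T[1]$-rigid and $|X|=r(\C)$, then $\add X$ is a maximal $T[1]$-rigid subcategory of $\C$.

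To prove the Claim I would pass to $\mod\la$, where $\la=\End_\C^{\mathrm{op}}(T)$. Suppose $X\oplus M$ is $T[1]$-rigid, and decompose $X\cong X'\oplus P[1]$, $M\cong M'\oplus Q[1]$ with $P,Q\in\add T$ and with $X',M'$ having no non-zero direct summand in $\add T[1]$. Under the equivalence $\overline{(-)}=\Hom_\C(T,-)\colon\C/[T[1]]\xrightarrow{\sim}\mod\la$ of Corollary~\ref{y2} together with the bijection of Theorem~\ref{y3}(a) (which, as in Theorem~\ref{a7}, sends a $T[1]$-rigid object $Y\cong Y'\oplus P'[1]$ to the $\tau$-rigid pair $(\overline{Y'},\overline{P'})$), the object $X$ corresponds to $(\overline{X'},\overline{P})$ and $X\oplus M$ to $(\overline{X'}\oplus\overline{M'},\,\overline{P}\oplus\overline{Q})$. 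Because $\overline{(-)}$ restricts to an equivalence $\add T\xrightarrow{\sim}\proj\la$ and carries the indecomposables of $\C$ outside $\add T[1]$ bijectively onto the indecomposables of $\mod\la$, the identity $|X|=|X'|+|P[1]|=r(\C)=|\la|$ becomes $|\overline{X'}|+|\overline{P}|=|\la|$; that is, $(\overline{X'},\overline{P})$ is a support $\tau$-tilting pair. Since support $\tau$-tilting pairs are maximal among $\tau$-rigid pairs~\cite{AIR}, and $(\overline{X'},\overline{P})$ is contained in $(\overline{X'}\oplus\overline{M'},\,\overline{P}\oplus\overline{Q})$, we obtain $\overline{M'}\in\add\overline{X'}$ and $\overline{Q}\in\add\overline{P}$; lifting back through the two equivalences gives $M'\in\add X'$ and $Q\in\add P$, whence $M\in\add X$. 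This proves the Claim, and with it the lemma, by applying Lemma~\ref{lem:main}(a) to the first triangle and Lemma~\ref{lem:main}(b) to the second.

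The step I expect to be the main obstacle is precisely this bookkeeping inside the Claim: one must check that the passage $\C\rightsquigarrow\mod\la$ preserves the count of non-isomorphic indecomposable summands, so that $|X|=r(\C)$ genuinely translates into ``support $\tau$-tilting'', and then invoke the maximality of support $\tau$-tilting pairs among $\tau$-rigid pairs. If one wishes to bypass that maximality input, an alternative is to re-run the diagram chase from the proof of Lemma~\ref{lem:main} on the given triangle to conclude directly that $[\T[1]](M,\X[1])=0$, $[\T[1]](\X,M[1])=0$ and $[\T[1]](M,M[1])=0$, i.e. that $X\oplus M$ is $T[1]$-rigid; then one needs only the a priori bound $|Y|\le r(\C)$ valid for every $T[1]$-rigid object $Y$ (immediate from the inequality $|N|+|P|\le|\la|$ for $\tau$-rigid pairs $(N,P)$ of $\la$), and since $X$ is a direct summand of $X\oplus M$ with $|X\oplus M|\le r(\C)=|X|$ the two have the same additive closure, forcing $M\in\add X$.
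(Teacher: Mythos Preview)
Your proposal is correct, and in fact your \emph{alternative} route in the final paragraph is exactly how the paper proceeds: it invokes the diagram chase of Lemma~\ref{lem:main} to conclude that $[T[1]](X\oplus M,(X\oplus M)[1])=0$, then cites the bound $|X\oplus M|\le |T|$ (stated there as \cite[Corollary~3.7(1)]{YZ}, which is the inequality $|N|+|P|\le|\la|$ for $\tau$-rigid pairs translated back through Theorem~\ref{y3}), and concludes from $|X|=|T|$ that $M\in\add X$.

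Your primary route via the Claim is a legitimate but slightly longer detour reaching the same destination with the same external input. Instead of using only the numerical bound on $\tau$-rigid pairs, you use the stronger statement that support $\tau$-tilting pairs are \emph{maximal} among $\tau$-rigid pairs \cite{AIR}, deduce that $\add X$ is maximal $T[1]$-rigid, and then apply Lemma~\ref{lem:main} as a black box. This is logically sound, and the bookkeeping you flag (that the equivalence $\C/[T[1]]\simeq\mod\la$ together with $\add T\simeq\proj\la$ preserves the count of indecomposable summands, so $|X|=r(\C)$ becomes $|\overline{X'}|+|\overline{P}|=|\la|$) is precisely what underlies both approaches. The difference is only one of packaging: the paper extracts just the inequality needed for the specific $M$ at hand, whereas you first establish maximality of $\add X$ in full and then specialize.
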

\begin{proof} Using similar arguments as in the proof of Lemma \ref{lem:main} we conclude that
$$[T[1]](X\oplus M, (X\oplus M)[1])=0.$$ By Corollary 3.7(1) in \cite{YZ}, we have $|X\oplus M|\leq |T|$. Since $|X|=|T|$, we know that $M\in \add X$.
\end{proof}

The following result was proved in \cite[Corollary 3.7(2)]{YZ}. For the convenience of the readers, now we give a triangulated version of Bongartz's classical proof.

\begin{lem}\label{h1}
Let $\C$ be a triangulated category with a Serre functor and a cluster tilting object $T$, and let $\la={\rm End}^{\emph{op}}_{\C}(T)$. Then
any $T[1]$-rigid object in $\C$ is a direct summand of some $T[1]$-cluster tilting object in the sense of Definition \ref{y1}.
\end{lem}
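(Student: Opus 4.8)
The plan is to carry out Bongartz's classical universal-extension argument inside $\C$, working relative to the ideal $[T[1]]$, and then to repair the number of indecomposable summands by adjoining suitable shifts of summands of $T$.

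First I would fix a $T[1]$-rigid object $X$, so $[T[1]](X,X[1])=0$. Since $\C$ is Hom-finite, $\Hom_{\C}(X,T[1])$ is finite dimensional, hence finitely generated as a right $\End_{\C}(X)$-module; choosing generators produces a morphism $\delta\colon X^{\oplus e}\to T[1]$ for which post-composition with $\delta$ makes $\Hom_{\C}(X,X^{\oplus e})\to\Hom_{\C}(X,T[1])$ surjective. Completing $\delta$ to a triangle
$$T\xrightarrow{~\alpha~}E\xrightarrow{~\beta~}X^{\oplus e}\xrightarrow{~\delta~}T[1]$$
yields the candidate $X\oplus E$. The decisive ``relative projectivity'' fact is that $T$ is cluster tilting, so $\Hom_{\C}(T,T'[1])=\Ext^1_{\C}(T,T')=0$ for $T'\in\add T$, and therefore $[T[1]](T,Y[1])=0$ for every $Y\in\C$.

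Second, I would prove $X\oplus E$ is $T[1]$-rigid. Applying $\Hom_{\C}(X,-)$ to the triangle and using surjectivity of post-composition with $\delta$ shows that post-composition with $\beta[1]$ is injective on $\Hom_{\C}(X,E[1])$; since it carries $[T[1]](X,E[1])$ into $[T[1]](X,X^{\oplus e}[1])=0$, we get $[T[1]](X,E[1])=0$. For $[T[1]](E,X[1])=0$ and $[T[1]](E,E[1])=0$ I would argue that any such morphism factors as $g=qp$ with $p\colon E\to T''[1]$, $T''\in\add T$; then $p\alpha\in[T[1]](T,T''[1])=0$, so $p=p'\beta$ for some $p'\colon X^{\oplus e}\to T''[1]$, whence $g=(qp')\beta$ with $qp'$ factoring through $T''[1]\in\add T[1]$, so $qp'\in[T[1]](X^{\oplus e},-)=0$ by the vanishings already obtained. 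Together with the hypothesis $[T[1]](X,X[1])=0$ this shows $X\oplus E$ is $T[1]$-rigid, so by \cite[Corollary~3.7(1)]{YZ} already $|X\oplus E|\le r(\C)$.

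Third, for the equality $|{\,\cdot\,}|=r(\C)$ I would first check $\alpha\colon T\to E$ is a left $\add(X\oplus E)$-approximation of $T$: rotating gives $X^{\oplus e}[-1]\xrightarrow{\delta[-1]}T\xrightarrow{\alpha}E$, and a morphism $s\colon T\to W$ with $W\in\add(X\oplus E)$ factors through $\alpha$ once $s\circ\delta[-1]=0$; but $s\circ\delta[-1]$ lies in $[T](X^{\oplus e}[-1],W)\cong[T[1]](X^{\oplus e},W[1])$, which vanishes by the rigidity just proved. Applying $\overline{(-)}=\Hom_{\C}(T,-)$ of Corollary~\ref{y2} to the triangle and using $\overline{T[1]}=0$ turns this into an exact sequence $\la=\overline{T}\xrightarrow{\overline{\alpha}}\overline{E}\to\overline{X}^{\oplus e}\to 0$ in which $\overline{\alpha}$ is a left $\add\overline{X\oplus E}$-approximation of $\la$; since $\overline{X\oplus E}$ is $\tau$-rigid (Theorem~\ref{y3}(a), or via Lemma~\ref{a6}), Proposition~\ref{y4} shows $\overline{X\oplus E}$ is a support $\tau$-tilting $\la$-module, so $|\overline{X\oplus E}|+|P|=|\la|=r(\C)$, where $P$ is the sum of the indecomposable projectives $\overline{T_i}$ with $\overline{X\oplus E}(T_i)=0$. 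Finally I would set $\widehat{Z}:=(X\oplus E)\oplus\bigoplus_{\overline{X\oplus E}(T_i)=0}T_i[1]$: adjoining these shifts keeps $\widehat{Z}$ $T[1]$-rigid (again because $\Ext^1_{\C}(T,\add T)=0$), a rank comparison against $r(\C)$ forces every $T[1]$-summand already present in $X\oplus E$ to be one of the $T_i[1]$ with $\overline{X\oplus E}(T_i)=0$, and then $|\widehat{Z}|=|\overline{X\oplus E}|+|P|=r(\C)$. Thus $\widehat{Z}$ is $T[1]$-cluster tilting in the sense of Definition~\ref{y1} and has $X$ as a direct summand.

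The main obstacle I expect is the second step: because we work modulo the ideal $[T[1]]$ rather than with honest $\Ext$-groups, the naive vanishings $\Ext^1_{\C}(E,X)=0$, $\Ext^1_{\C}(E,E)=0$ are false, and every long-exact-sequence chase must be performed at the level of morphisms, carefully tracking which maps factor through $\add T[1]$; it is precisely $\Ext^1_{\C}(T,\add T)=0$ that makes each of these chases close. By comparison, the bookkeeping in the third step (matching $T[1]$-summands with the killed projectives via the rank bound) is routine once support $\tau$-tiltingness of $\overline{X\oplus E}$ is in hand.
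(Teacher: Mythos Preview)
Your proof is correct and follows essentially the same route as the paper's: the same Bongartz triangle $T\to E\to X'\to T[1]$ with the last map a right $\add X$-approximation of $T[1]$, the same ideal-level diagram chases establishing $T[1]$-rigidity of $X\oplus E$, the same verification that $T\to E$ is a left $\add(X\oplus E)$-approximation, and the same passage through Proposition~\ref{y4} after applying $\overline{(-)}=\Hom_\C(T,-)$. The only divergence is at the very end: the paper invokes the bijection of Theorem~\ref{y3}(b) directly to conclude that $U\oplus X$ is already $T[1]$-cluster tilting, whereas you explicitly adjoin the shifts $T_i[1]$ with $\Hom_\C(T_i,X\oplus E)=0$ and count summands to reach rank $r(\C)$---your version is more careful here, since the bare appeal to Theorem~\ref{y3}(b) tacitly presumes that $U\oplus X$ already carries the correct $T[1]$-summands.
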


\proof Assume that $X$ is a $T[1]$-rigid object, we take a triangle
$$T\s{f}\longrightarrow U \s{g}\longrightarrow X_1 \s{h}\longrightarrow T[1],$$
where $h$ is a  right $(\add X)$-approximation of $T[1]$.
We claim that $R:=U\oplus X$ is a $T[1]$-cluster tilting object in the sense of Definition \ref{y1}.
\medskip

\textbf{Step 1:} We show that $R=U\oplus X$ is a $T[1]$-rigid object.
\medskip

Take any map $a\in [T[1]](X, U[1])$. Since $a$ factors through $\add T[1]$ and $X$ is $T[1]$-rigid, we have $g[1]a=0$.
$$\xymatrix@!@C=0.8cm@R=0.6cm{
 U\ar[r]^g & X_1\ar[r]^h    & T[1]\ar[r]^{-f[1]\;} & U[1]\ar[r]^{-g[1]\;}  &  X_1[1]   \\
   &&&X\ar[u]_{a}\ar@{.>}[ul]_{b}\ar@{.>}[ull]^{c}              }
$$
Thus there exists a morphism $b\colon X\rightarrow T[1]$ such that $a=-f[1]b$.
Since $h$ is a right $(\add X)$-approximation of $T[1]$,  there exists a morphism $c\colon X\rightarrow X_1$ such that $b=hc$. It follows that $$a=-f[1]b=(-f[1]h)c=0,$$ and therefore $$[T[1]](X, U[1])=0.$$

For any morphism $u\in [T[1]](U, X[1])$, we know that there are two morphisms $u_1\colon U\rightarrow T_0[1]$ and $u_2\colon T_0[1]\rightarrow X[1]$ such that $u=u_2u_1$, where $T_0\in \add T$.
$$\xymatrix@!@C=0.6cm@R=0.6cm{
X_1[-1]\ar[r]^{\quad h[-1]}&T\ar[r]^f&U\ar[d]^{u_1} \ar[r]^g & X_1\ar@{.>}[ld]^{v}   \\
   && T_0[1]\ar[d]^{u_2}\\
   && X[1]             }
$$
Since $\Hom_{\C}(T,T_0[1])=0$, there exists a morphism $v\colon X_1\to T_0[1]$ such that $u_1=vg$.
Because $X$ is $T[1]$-rigid, we have $u=u_2u_1=(u_2v)g=0$. Therefore $$[T[1]](U, X[1])=0.$$

For any morphism $x\in [T[1]](U, U[1])$, we know that there are two morphisms $x_1\colon U\rightarrow T_1[1]$ and $x_2\colon T_1[1]\rightarrow U[1]$ such that $x=x_2x_1$, where $T_1\in \add T$.
$$\xymatrix@!@C=0.6cm@R=0.6cm{
T\ar[r]^{f} & U\ar[r]^g\ar[d]^{x_1} & X_1\ar[r]^h\ar@{.>}[ld]^{y}& T[1]\\
   & T_1[1]\ar[d]^{x_2}\\
   & U[1]             }$$
Since $\Hom_{\C}(T,T_1[1])=0$, there exists $y\colon X_1\to T_1[1]$ such that $x_1=yg$.
Because $x_2y\in[T[1]](X_1,U[1])$ and $[T[1]](X,U[1])=0$, we have $x=x_2x_1=(x_2y)g=0$. Therefore $$[T[1]](U, U[1])=0.$$
This shows that $R=U\oplus X$ is a $T[1]$-rigid object.
\medskip

\textbf{Step 2:} We show that $f$ is a left $(\add R)$-approximation of $T$.
\medskip

For any morphism $\alpha\colon T\to R'$, where $R'\in\add R$, by Step 1, we have
$[T[1]](X,R[1])=0$. Thus $\alpha[1]h=0$ and then $\alpha h[-1]=0$. We obtain a commutative diagram
$$\xymatrix{
X_1[-1]\ar[r]^{\quad h[-1]}&T\ar[d]^{\alpha}\ar[r]^f&U\ar@{.>}[ld]^{\beta}\ar[r]^g & X_1  \\
&R'&}$$
Hence $f$ is a left $(\add R)$-approximation of $T$.
\medskip

\textbf{Step 3:} We show that $R=U\oplus X$ is a $T[1]$-cluster tilting object in the sense of Definition \ref{y1}.
\medskip

Applying the functor $\Hom_{\C}(T,-)$ to this triangle
$$T\s{f}\longrightarrow U \s{g}\longrightarrow X_1 \s{h}\longrightarrow T[1],$$
we have the following exact sequence
$$\Lambda\s{\overline{f}}\longrightarrow \overline{U} \longrightarrow \overline{X_1 }\longrightarrow 0,$$
where $\overline{U},\overline{X_1}\in\add \overline{R}$.
By the equivalence (\ref{w1}) and Step2, we have that $\overline{f}$ is a left $(\add \overline{R})$-approximation of $\Lambda$.
By Theorem \ref{y3} and Step1, we have that $\overline{R}$ is a $\tau$-rigid module.
By Lemma \ref{y4}, we know that $\overline{R}$ is a support $\tau$-tilting $\Lambda$-module.
By Theorem \ref{y3}, we have that $R=U\oplus X$ is a $T[1]$-cluster tilting object
in the sense of Definition \ref{y1}.\qed

\medskip

Our main result in this section is the following.

\begin{thm}\label{thm:equi}
Let $\C$ be a triangulated category with a Serre functor and a cluster tilting object $T$,  and let $\la={\rm End}^{\emph{op}}_{\C}(T)$. Then
Definition \ref{y5} and  Definition \ref{y1} are equivalent.
\end{thm}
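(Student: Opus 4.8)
The plan is to prove the equivalence by showing that each of the two notions implies the other, working through the functor $\overline{(-)}=\mathrm{Hom}_{\C}(T,-)\colon\C\to\mod\la$ and the bijections of Theorem \ref{y3}. Write $r=r(\C)=|T|$.

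First I would show that an object satisfying Definition \ref{y1} satisfies Definition \ref{y5}. So let $X$ be $T[1]$-rigid with $|X|=r$. The inclusion $\add X\subseteq\{M\in\C\mid [T[1]](M,X[1])=0=[T[1]](X,M[1])\}$ is immediate from $T[1]$-rigidity. For the reverse inclusion, take $M$ with $[T[1]](M,X[1])=0=[T[1]](X,M[1])$. Since $T$ is cluster tilting, Proposition \ref{FT}(a) gives a triangle $T_1\to T_0\to M\to T_1[1]$ with $T_0,T_1\in\add T$. Form a left $\add X$-approximation $u\colon T_0\to X_1$ of $T_0$ and complete to a triangle $T_0\xrightarrow{u}X_1\to X_2\to T_0[1]$; by Lemma \ref{lem:main2} applied to this triangle, $X_2\in\add X$. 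Then, exactly as in the proof of Theorem \ref{a3} (octahedral axiom and the argument that $x=uf$ is a left $\add X$-approximation of $T_1$, using $[T[1]](M,X[1])=0$), one obtains $N\in\add X$ and a triangle $M\xrightarrow{a}N\xrightarrow{b}X_2\xrightarrow{c}M[1]$ whose connecting map $c=g[1]w$ lies in $[T[1]](\add X,M[1])=0$, hence the triangle splits and $M\in\add X$. This gives Definition \ref{y5}.

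Conversely, I would show that Definition \ref{y5} implies Definition \ref{y1}. Let $X$ be such that $\add X=\{M\in\C\mid [T[1]](M,X[1])=0=[T[1]](X,M[1])\}$; in particular $X$ is $T[1]$-rigid, so it only remains to prove $|X|=r$. By Lemma \ref{h1}, $X$ is a direct summand of some object $R$ which is $T[1]$-cluster tilting in the sense of Definition \ref{y1}, so $|R|=r$. By the direction already proved, $R$ also satisfies Definition \ref{y5}, i.e. $\add R=\{M\mid [T[1]](M,R[1])=0=[T[1]](R,M[1])\}$. Since $X\in\add R$, every $M\in\add X$ satisfies $[T[1]](M,R[1])=0=[T[1]](R,M[1])$; but conversely I need to compare the two ``Bongartz-type'' completions. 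The cleanest route is via the bijection of Theorem \ref{y3}(a)--(b): under $\overline{(-)}$, $X$ goes to a $\tau$-rigid $\la$-module $\overline X$, and the defining property of Definition \ref{y5} translates, via the equivalence $\C/[T[1]]\xrightarrow{\sim}\mod\la$ of Corollary \ref{y2} and $[T[1]](\cdot,\cdot[1])\cong\mathrm{Hom}_{\la}(\cdot,\tau\cdot)$, into the statement that $\overline X$ is maximal $\tau$-rigid in the sense that any $\la$-module $N$ with $\mathrm{Hom}_{\la}(N,\tau\overline X)=0=\mathrm{Hom}_{\la}(\overline X,\tau N)$ lies in $\add\overline X$. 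A maximal $\tau$-rigid module is a support $\tau$-tilting module (it is $\tau$-rigid and cannot be properly enlarged, and by \cite{AIR}-style arguments any $\tau$-rigid module embeds in a support $\tau$-tilting one), hence $\overline R$ and $\overline X$ are both support $\tau$-tilting; combined with $\overline X\in\add\overline R$ and the fact that support $\tau$-tilting modules have no proper support $\tau$-tilting direct summand, we get $\add\overline X=\add\overline R$, so $|X|=|R|=r$, giving Definition \ref{y1}.

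The main obstacle I anticipate is the bookkeeping in this last comparison: one must be careful that ``$X$ has no direct summand in $\add T[1]$'' is not assumed in Definition \ref{y5}, so the translation to $\mod\la$ must carry along a projective part, i.e. one should really work with the pair $(\overline X,P)$ where $P$ records the summands of $X$ in $\add T[1]$, and invoke the support $\tau$-tilting version of Theorem \ref{y3}(b) together with the count $|X|=|\overline X|+|P|$ and $|\overline{X}|+|P|=|\la|=r$ for support $\tau$-tilting pairs. Granting the earlier results of the paper, all the triangulated-category manipulations (octahedron, approximations, splitting of triangles with vanishing connecting map) are routine repetitions of Lemma \ref{lem:main}, Lemma \ref{lem:main2}, Lemma \ref{h1} and Theorem \ref{a3}, so the real content is just assembling them correctly.
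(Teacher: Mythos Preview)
Your argument for Definition~\ref{y1} $\Rightarrow$ Definition~\ref{y5} is essentially the paper's: left $\add X$-approximations of $T_0$ and $T_1$, Lemma~\ref{lem:main2}, the octahedral diagram, and the splitting of the resulting triangle all match.

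For Definition~\ref{y5} $\Rightarrow$ Definition~\ref{y1}, however, you take a substantial detour that the paper avoids entirely. You correctly invoke Lemma~\ref{h1} to write $X$ as a summand of some $R$ with $|R|=r$ satisfying Definition~\ref{y1}, but then you pass to $\mod\la$, invoke an identification $[T[1]](\cdot,\cdot[1])\cong\mathrm{Hom}_\la(\cdot,\tau\cdot)$ that is neither stated nor proved in the paper (and which, as written, is not quite correct without further hypotheses and the Serre functor), appeal to an AIR-style ``maximal $\tau$-rigid $=$ support $\tau$-tilting'' characterization, and then worry about the projective part coming from summands in $\add T[1]$. None of this is needed. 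The paper's argument stays in $\C$ and is a one-liner: write $R=X\oplus M$; since $R$ is $T[1]$-rigid one has in particular $[T[1]](M,X[1])=0=[T[1]](X,M[1])$; the defining property of Definition~\ref{y5} for $X$ then forces $M\in\add X$, whence $|X|=|X\oplus M|=|T|$. Note that this does not even use the other implication, whereas your argument does. Your route could probably be salvaged, but it is both longer and leans on external facts that require their own justification; the direct argument above is what you were missing.
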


\proof (1)\ \ Definition \ref{y5} $\Rightarrow$ Definition \ref{y1}.
\medskip

Assume that $X$ is a $T[1]$-cluster tilting object in the sense of Definition \ref{y5}. Then $X$ is $T[1]$-rigid.
By Lemma \ref{h1}, there exists an object $M\in\C$ such that $X\oplus M$ is $T[1]$-cluster tilting in the sense of Definition \ref{y1}. That is to say, $X\oplus M$ is $T[1]$-rigid and $|X\oplus M|=|T|$.
Since $X\oplus M$ is $T[1]$-rigid, we have $$[T[1]](M, X[1])=0=[T[1]](X, M[1]).$$ By Definition \ref{y5}, we have
$M\in\add X$. It follows that $|X|=|X\oplus M|=|T|$. This shows that $X$ is $T[1]$-cluster tilting in the sense of Definition \ref{y1}.
\medskip

(2) \ \ Definition \ref{y1} $\Rightarrow$ Definition \ref{y5}.
   \medskip

   Assume that $X$ is a $T[1]$-cluster tilting object in Definition \ref{y1}. Clearly, $$\add X\subseteq \{M\in\C\ |\ [T[1]](X, M[1])=0= [T[1]](M, X[1])\}.$$
Conversely, for any object $M\in \{M\in\C\ |\ [T[1]](X, M[1])=0= [T[1]](M, X[1])\}$, since $T$ is cluster tilting, we have a triangle $$T_1\s{f}\longrightarrow T_0 \s{g}\longrightarrow M\s{h}\longrightarrow T_1[1],$$ where $T_0, T_1\in\add T.$ By Lemma \ref{lem:main2}, there is a left $\add X$-approximation $l_1$ of $T_0$ which extends to a triangle
$$X_1[-1]\s{m}\longrightarrow T_0 \s{l_1}\longrightarrow X_0 \longrightarrow X_1,$$ where $X_0, X_1\in\add X.$
\medskip

Let $l_2=l_1f$. It is easy to see that $l_2$ is a left $\add X$-approximation of $T_1$. Indeed, for any object $X'\in \add X$ and any map $a\in\Hom(T_1, X')$, we have $ah[-1]\in [T](M[-1], X')=0$. Then there exists $b\colon T_0\longrightarrow X'$ such that $a=bf$. Because $l_1$ is a left $\add X$-approximation of $T_0$, there is a map $c\colon X_0\longrightarrow X'$ such that $b=cl_1$. Therefore $a=bf=c(l_1f)=cl_2$ and $l_2$ is a left ($\add X$)-approximation of $T_1$.
\begin{center}
$\xymatrix@!@C=0.5cm@R=0.4cm{
 M[-1]\ar[r]^{\quad h[-1]}  &  T_1\ar[r]^{f}\ar[d]_{\forall a} & T_0\ar[r]^{g}\ar@{.>}[dl]^{b}\ar[d]^{l_1} & M\ar[r]^{h\quad} & T_1[1] \\
                      &  X'                      & X_0\ar@{.>}[l]^{c}
  }$
\end{center}
Using Lemma \ref{lem:main2}, we get a triangle $$X_2[-1] \longrightarrow T_1 \s{l_2}\longrightarrow X_0 \longrightarrow X_2,$$ where $X_2\in\add X.$
Starting with $l_2=l_1f$, we get the following commutative diagram by the octahedral axiom.
\begin{center}
$\xymatrix@!@C=0.5cm@R=0.5cm{
               &M[-1]\ar[d]\ar@{=}[r]  & M[-1]\ar[d]  \\
X_0[-1]\ar@{=}[d]\ar[r]  & X_2[-1]\ar[d]\ar[r]     & T_1\ar[r]^{l_2}\ar[d]^{f} & X_0\ar@{=}[d]  \\
X_0[-1]\ar[r]            & X_1[-1]\ar[d]^n\ar[r]^{\quad m}     & T_0\ar[d]^g\ar[r]^{l_1}    & X_0 \\
               & M\ar@{=}[r]      & M
  }$
\end{center}
Since $n=gm\in [T](X_1[-1], M)=0$, we get a split triangle and then $M\in\add X$.
This shows that $X$ is a $T[1]$-cluster tilting object in the sense of Definition \ref{y5}.   \qed
\medskip

We get the following direct consequence.
\begin{cor}
If $\C$ is a triangulated category with a Serre functor and a cluster tilting object, then relative cluster tilting objects are precisely maximal ghost rigid objects.
\end{cor}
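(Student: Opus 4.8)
The plan is to combine Theorem \ref{thm:equi} with Theorem \ref{a3}, together with one elementary observation. By Theorem \ref{thm:equi}, the relative cluster tilting objects of Definition \ref{y1} are exactly the ghost cluster tilting objects of Definition \ref{y5}, i.e. those objects $X$ for which $\add X$ is a $\T[1]$-cluster tilting subcategory for some cluster tilting subcategory $\T$ of $\C$ (we use here that, since $\C$ has a cluster tilting object, every cluster tilting subcategory is of the form $\add T$ for a cluster tilting object $T$, so the ``object'' and ``subcategory'' versions of the hypotheses match). Thus it suffices to identify ghost cluster tilting objects with maximal ghost rigid objects, i.e. objects $X$ such that $\add X$ is a maximal $\T[1]$-rigid subcategory for some cluster tilting $\T$.

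First I would record that for a single object $X$ the subcategory $\add X$ is always contravariantly finite in $\C$: since $\C$ is Hom-finite and Krull--Remak--Schmidt, a generating system of the finite-dimensional $\End_\C(X)$-module $\Hom_\C(X,C)$ gives a morphism $X^{n}\to C$ which is a right $\add X$-approximation of an arbitrary $C\in\C$. Hence the contravariant finiteness hypothesis appearing in Theorem \ref{a3} is automatically satisfied whenever the ambient ghost rigid subcategory is of the form $\add X$.

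With this in hand the two inclusions are immediate. If $X$ is ghost cluster tilting, then $\add X$ is a $\T[1]$-cluster tilting subcategory, hence (directly from Definition \ref{a2}, or from Theorem \ref{a3}) a contravariantly finite maximal ghost rigid subcategory, so $X$ is maximal ghost rigid. Conversely, if $X$ is maximal ghost rigid, then $\add X$ is a maximal $\T[1]$-rigid subcategory which, being contravariantly finite by the previous paragraph, is $\T[1]$-cluster tilting by Theorem \ref{a3}; hence $X$ is ghost cluster tilting. Feeding this equivalence back through Theorem \ref{thm:equi} yields the corollary.

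Since all the substantial work is already contained in Theorems \ref{a3} and \ref{thm:equi}, there is no real obstacle here; the only point that needs to be made explicit is the standard fact that $\add X$ is contravariantly finite, which is precisely what removes the sole hypothesis of Theorem \ref{a3} not already built into the notion of a maximal ghost rigid object.
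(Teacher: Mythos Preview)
Your proof is correct and follows the same route as the paper, which simply cites Theorem \ref{a3} and Theorem \ref{thm:equi}. The only addition you make explicit—that $\add X$ is automatically contravariantly finite in a Hom-finite Krull--Remak--Schmidt category, so the contravariant finiteness hypothesis of Theorem \ref{a3} is vacuous at the object level—is exactly the implicit step the paper's one-line proof is relying on.
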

\proof This follows from Theorem \ref{a3} and Theorem \ref{thm:equi} directly.
\qed

\begin{rem}
Let $\C$ be a triangulated category with a Serre functor and a cluster tilting object $T$. One may try to define ghost cluster tilting objects in $\C$ as follows (two analogues of the definition of cluster tilting objects):
\begin{equation}\label{d1}
\begin{array}{l}
\add X= \{M\in\C\ |\ [T[1]](X, M[1])=0\},
\end{array}
\end{equation} or
\begin{equation}\label{d2}
\begin{array}{l}
\add X=\{M\in\C\ |\ [T[1]](X, M[1])=0\}= \{M\in\C\ |\ [T[1]](M, X[1])=0\}.
\end{array}
\end{equation}
 However, we have the following example to illustrate that (\ref{d1}) or (\ref{d2}) is not equivalent to Definition \ref{y1}.
\end{rem}
\begin{exm}
{\upshape Let $Q$ be the quiver $\setlength{\unitlength}{0.03in}\xymatrix{1 \ar[r]^{\alpha} & 2}$ and $\tau_Q$ be the Auslander-Reiten translation in $D^b(kQ)$. We consider a triangulated category, named \emph{repetitive cluster category} in \cite{Zh}, $\C=D^b(kQ)/\langle \tau_Q^{-2}[2]\rangle$, whose objects are the same in $D^b(kQ)$, and whose morphisms are given by $$\Hom_{D^b(kQ)/\langle \tau_Q^{-2}[2]\rangle}(X, Y)=\bigoplus_{i\in \mathbb{Z}}\Hom_{D^b(kQ)}(X, (\tau_Q^{-2}[2])^iY).$$  }
\end{exm}
We depict the AR-quiver of $\C$ as follows.
\begin{center}\label{f:1}$
\xymatrix@!@C=0.4cm@R=1cm{  
&\txt{1\\2}\ar@{.}[l]\ar[dr]&&*+[o][F-]{2[1]}\ar@{.>}[ll]\ar[dr]&&1[1]\ar@{.>}[ll]\ar[dr]&&*+[o][F-]{\txt{1\\2}[2]}\ar@{.>}[ll]\ar[dr]&&2[3]\ar@{.>}[ll]\ar[dr]&&\txt{1\\2}\ar@{.>}[ll]  \\
2\ar[ur] &&*+[o][F-]{1}\ar@{.>}[ll] \ar[ur]&&\txt{1\\2}[1]\ar@{.>}[ll]\ar[ur] && 2[2]\ar@{.>}[ll] \ar[ur]&&*+[o][F-]{1[2]}\ar@{.>}[ll] \ar[ur]&&2 \ar@{.>}[ll]\ar[ur]  }$
\end{center}
{\setlength{\jot}{-3pt}
 It is easy to check that the direct sum $T=1\oplus 2[1]\oplus \begin{aligned}1\\2\end{aligned}[2]\oplus 1[2]$} of the encircled indecomposable objects is a cluster tilting object. Thus it is also a $T[1]$-cluster tilting object. Clearly, $$\{M\in\C\ |\ [T[1]](T, M[1])=0\}=\C\neq \add T,$$
 which means that (\ref{d1}) or (\ref{d2}) does not hold.

\section*{Acknowlegement}

The second author would like to thank Huimin Chang for helpful discussions on the subject.


\begin{thebibliography}{99}
\bibitem[AR]{ar} M. Auslander, I. Reiten. \newblock Applications of contravariantly finite subcategories.
\newblock Adv. Math. \textbf{86}(1), 111-152, 1991.

\bibitem[AIR]{AIR}
T. Adachi, O. Iyama and I. Reiten.
\newblock $\tau$-tilting theory.
\newblock Compos. Math. \textbf{150}(3), 415-452, 2014.

\bibitem[Au1]{au1}
M.\ Auslander.
\newblock Representation theory of Artin
algebras I.
\newblock Comm.\ Algebra {\bf 1}, 177-268, 1974.

\bibitem[Au2]{au2}
M.\ Auslander.
\newblock Coherent functors.
\newblock Proc. Conf. Categorical Algebra (La Jolla, Calif., 1965), pp. 189-231. Springer, New York, 1966.

\bibitem[Be1]{Be1}
A. Beligiannis.
\newblock  Tilting theory in Abelian categories and related homotopical structures. Preprint, 2010.

\bibitem[Be2]{Be2}
A. Beligiannis. Rigid objects, triangulated subfactors and abelian localizations. Math. Z. \textbf{274 }(3-4), 841-883, 2013.



\bibitem[BBT]{BBT}
L. Beaudet, T. Br\"{u}stle and G. Todorov.
\newblock Projective dimension of modules over cluster-tilted algebras.
\newblock Algebr. Represent. Theory \textbf{17}, 1797-1807, 2014.


\bibitem[BK]{BK}
A. I. Bondal and M. M. Kapranov.
\newblock Representable functors, Serre functors, and mutations.
\newblock Math. USSR-Izv. \textbf{35}(3), 519-541, 1990.

\bibitem[BMR]{BMR}
A. B. Buan, R. J. Marsh and I. Reiten.
\newblock Cluster-tilted algebras.
\newblock Trans. Amer. Math. Soc. \textbf{359}(1), 323-332, 2007.

\bibitem[BMRRT]{bmrrt} A. Buan, R. Marsh, M. Reineke, I. Reiten, G. Todorov. \newblock Tilting theory and cluster combinatorics.  \newblock Adv. Math. \textbf{204}(2), 572-618, 2006.

\bibitem[CZZ]{CZZ}
W. Chang, J. Zhang and B. Zhu.
\newblock On support $\tau$-tilting modules over endomorphism algebras of rigid objects.
\newblock Acta Math. Sin. (Engl. Ser.) \textbf{31}(9), 1508-1516, 2015.

\bibitem[FL]{FL}
C. Fu and P. Liu.
\newblock Lifting to cluster-tilting objects in 2-Calabi-Yau triangulated categories.
\newblock Comm. Algebra \textbf{37}(7), 2410-2418, 2009.

\bibitem[Ha]{Ha}
D. Happel.
\newblock Triangulated categories in the representation theory of finite-dimensional algebras.
\newblock London Math. Soc., Lecture Note Ser., \textbf{119}, Cambridge Univ. Press, Cambridge, 1988.


\bibitem[HJ]{hj}  T. Holm, P. J{\o}rgensen.  \newblock On a cluster category of infinite Dynkin type, and the relation to triangulations of the infinity-gon. \newblock Math. Z.  \textbf{270}(1-2), 277-295, 2012.


\bibitem[IJY]{ijy}
\newblock O.\ Iyama, P.\ J{\o}rgensen, D.\ Yang.
 \newblock Intermediate co-$t$-structures, two-term silting objects, $\tau$-tilting modules, and torsion classes. Algebra \& Number Theory, \textbf{8}(10), 2413--2431, 2014.


\bibitem[IY]{IY}
O.\ Iyama, Y.\ Yoshino.
\newblock Mutation in triangulated categories and rigid Cohen-Macaulay modules.
\newblock Invent. Math.  \textbf{172}(1), 117-168, 2008.

\bibitem[Ja]{ja} G. Jasso. \newblock  Reduction of $\tau$-tilting modules and torsion pairs.
\newblock Int. Math. Res. Not. \textbf{16}, 7190-7237, 2015.


\bibitem[KR]{kr}
B. Keller, I. Reiten.
\newblock Cluster-tilted algebras are Gorenstein and stably Calabi-Yau.
\newblock Adv. Math. \textbf{211}, 123-151, 2007.

\bibitem[KZ]{KZ}
S. Koenig, B. Zhu.
\newblock From triangulated categories to abelian categories: cluster tilting in a general framework.
\newblock Math. Z. \textbf{258}, 143-160, 2008.

\bibitem[La]{la}
A. Lasnier.
\newblock Projective dimensions in cluster-tilted categories. arXiv: 1111.3077, 2011.

\bibitem[Ne]{ne} A.\ Neeman. \newblock Triangulated Categories. Vol. 148. Princeton University Press, 2001.

\bibitem[Ng]{Ng} P. Ng. \newblock A characterization of torsion theories in the cluster category of Dynkin type $\mathbb{A}_\infty$. arXiv:1005.4364, 2010.

\bibitem[RVdB]{RVdB}
I. Reiten and M. Van den Bergh.
\newblock Noetherian hereditary abelian categories satisfying Serre duality.
\newblock J. Amer. Math. Soc. \textbf{15}(2), 295-366, 2002.

\bibitem[Sm]{Sm}
D. Smith.
\newblock On tilting modules over cluster-tilted algebras.
\newblock Illinois J. Math. \textbf{52}(4), 1223-1247, 2008.

\bibitem[YZ]{YZ}
W. Yang, B. Zhu. \newblock Relative cluster tilting objects in triangulated categories. arXiv:1504.00093, 2015, to appear in Trans. Amer. Math. Soc.

\bibitem[YZZ]{YZZ}
W. Yang, J. Zhang and B. Zhu.
\newblock On cluster-tilting objects in a triangulated category with Serre duality,
\newblock Comm. Algebra \textbf{45}(1), 299-311, 2017.

\bibitem[Zh]{Zh}
B. Zhu.
\newblock Cluster-tilted algebras and their intermediate coverings.
\newblock Comm. Algebra \textbf{39}, 2437-2448, 2011.

\bibitem[ZZ]{zz} Y.\ Zhou, B.\ Zhu. \newblock Maximal rigid subcategories in $2$-Calabi-Yau triangulated categories.
\newblock J. Algebra, \textbf{348}: 49-60, 2011.
\end{thebibliography}
\end{document}